\documentclass{article}

\usepackage[T1]{fontenc}
\usepackage[utf8]{inputenc}

\usepackage[margin=1.3in]{geometry}
\usepackage{graphicx}

\usepackage{amssymb,amsthm}
\usepackage{authblk}
\usepackage{mathtools}
\mathtoolsset{showonlyrefs}	
\numberwithin{equation}{section}
\usepackage{thmtools}       

\usepackage[
natbib,
maxcitenames=3, 
maxbibnames=10,  
sorting=ydnt,
url=false,
doi=false,
sortcites,
defernumbers,
backref,
backend=biber
]{biblatex}
\usepackage{hyperref}

\usepackage{todonotes}
\usepackage{url}

\usepackage[nameinlink, capitalise, noabbrev]{cleveref}

\usepackage{xfrac}
\usepackage{nicefrac}

\usepackage{bbm}
\usepackage[shortlabels]{enumitem}
\newtheorem{theorem}{Theorem}

\numberwithin{equation}{section}

\newcommand{\norm}[1]{\left\Vert #1 \right\Vert}

\newcommand{\abs}[1]{\left\vert #1 \right\vert}
\DeclareMathOperator{\divtmp}{div}
\renewcommand{\div}{\divtmp}
\DeclareMathOperator{\argmin}{arg\,min}
\DeclareMathOperator{\argmax}{arg\,max}
\newcommand{\st}{\,:\,}
\DeclareMathOperator{\supp}{supp}

\renewcommand{\d}{\,\mathrm{d}}

\newcommand{\eps}{\varepsilon}

\newcommand{\restr}{\mathbin{\vrule height 1.6ex depth 0pt width
0.13ex\vrule height 0.13ex depth 0pt width 1.3ex}}

\newtheorem{proposition}{Proposition}[section]
\newtheorem{lemma}[proposition]{Lemma}
\newtheorem{corollary}[proposition]{Corollary}
\theoremstyle{remark}
\newtheorem{remark}[proposition]{Remark}

\theoremstyle{definition}
\newtheorem{example}[proposition]{Example}
\newtheorem{definition}[proposition]{Definition}
\newtheorem{assumption}[proposition]{Assumption}

\DeclareMathOperator{\grad}{grad}
\DeclareMathOperator*{\Esssup}{ess\ sup}							
\DeclareMathOperator*{\Essinf}{ess\ inf}							
\DeclareMathOperator{\Esssupp}{ess\ supp}	
\newcommand{\esssup}[2]{\Esssup_{#2}{#1}}						
\newcommand{\nesssup}[2]{\nu\mhyphen\Esssup_{#2}{#1}}
\newcommand{\essinf}[2]{\Essinf_{#2}{#1}}						
\newcommand{\nessinf}[2]{\nu\mhyphen\Essinf_{#2}{#1}}
\newcommand{\esssupp}[1]{\Esssupp{#1}}							

\newcommand{\IRn}{{\IR^{\kern-.7pt n}}}							
\newcommand{\IRN}{{\IR^{\kern-.7pt N}}}
\newcommand{\IRm}{{\IR^{\kern-.7pt n}}}	
\newcommand{\IRM}{{\IR^{\kern-.7pt M}}}

\newcommand{\set}[2]{\left\{#1 \,\colon\, #2\right\}}			
\newcommand{\dotproduct}[2]{\left\langle #1,#2\right\rangle}	

\newcommand{\ball}[2]{\mathrm{B}_{#1}(#2)}						
\newcommand{\clball}[2]{\overline{\mathrm{B}}_{#1}(#2)}						

\newcommand{\Gr}{\mathrm{Gr}\,}                                 
\newcommand{\dd}{\,\mathrm{d}}	
\newcommand{\filll}{\ \cdot\ }									

\newcommand{\tv}{\operatorname{TV}}

\newcommand{\corr}{\twoheadrightarrow}

\newcommand{\LL}[1]{\mathsf{\large{L}}^{\kern-1.5pt#1}}			
\newcommand{\Cc}[1]{{C}^{#1}_c}	
\newcommand{\Cci}{\Cc{\infty}}						
\newcommand{\Lpnorm}[3]{\norm{#1}_{L^{#2}(#3)}} 								

\newcommand{\p}{\partial}						
\newcommand{\Ome}{\Omega}
\renewcommand{\phi}{\varphi}
\newcommand{\vrho}{\varrho}
\renewcommand{\rho}{\vrho}

\newcommand{\Frho}{{\boldsymbol{\vrho}}}
\newcommand{\FPsi}{\boldsymbol{\Psi}}

\mathchardef\mhyphen="2D

\definecolor{bordercolor}{RGB}{0, 74, 143}

\usepackage[bb=dsserif]{mathalpha}
\usepackage{bm}
\newcommand\Ie{\ifdefined\mathbbb\mathbbb{1}%
	\else\boldsymbol{\mathbb{1}}%
	\fi}														

\def\Xint#1{\mathchoice
	{\XXint\displaystyle\textstyle{#1}}%
	{\XXint\textstyle\scriptstyle{#1}}%
	{\XXint\scriptstyle\scriptscriptstyle{#1}}%
	{\XXint\scriptscriptstyle\scriptscriptstyle{#1}}%
	\!\int}
\def\XXint#1#2#3{{\setbox0=\hbox{$#1{#2#3}{\int}$}
		\vcenter{\hbox{$#2#3$}}\kern-.5\wd0}}

\def\dashint{\Xint-}

\newcommand{\mint}{\dashint}

\DeclareMathOperator*{\wlimop}{w\mhyphen lim}
\newcommand{\wlim}[1]{\wlimop_{#1}}

\newcommand{\IE}{\mathbbm{E}}

\newcommand{\IN}{\mathbbm{N}}

\newcommand{\IR}{\mathbbm{R}}

\newcommand{\A}{\mathcal{A}}

\newcommand{\C}{\mathcal{C}}

\newcommand{\G}{\mathcal{G}}
\renewcommand{\H}{\mathcal{H}}

\newcommand{\M}{\mathcal{M}}

\renewcommand{\P}{\mathcal{P}}	

\newcommand{\R}{\mathbb{R}}

\newcommand{\X}{\mathcal{X}}
\newcommand{\Y}{\mathcal{Y}}

\newcommand{\BB}{\mathfrak{B}}

\newcommand{\MM}{\mathfrak{M}}

\newcommand{\PP}{\mathfrak{P}}

\begin{document}

\title{Duality for the Adversarial Total Variation}
\author{Leon Bungert}
\author{Lucas Schmitt} 
\affil{Institute of Mathematics, Center for Artificial Intelligence and Data Science (CAIDAS), University of Würzburg, Emil-Fischer-Str. 40, 97074 Würzburg, Germany, Email: \texttt{\{leon.bungert, lucas.schmitt\}@uni-wuerzburg.de}}

\maketitle

\begin{abstract}
    Adversarial training of binary classifiers can be reformulated as regularized risk minimization involving a nonlocal total variation.
    Building on this perspective, we establish a characterization of the subdifferential of this total variation using duality techniques. 
    To achieve this, we derive a dual representation of the nonlocal total variation and a related integration of parts formula, involving a nonlocal gradient and divergence. 
    We provide such duality statements both in the space of continuous functions vanishing at infinity on proper metric spaces and for the space of essentially bounded functions on Euclidean domains.
    Furthermore, under some additional conditions we provide characterizations of the subdifferential in these settings. 
    \\
    \textbf{Keywords:} adversarial training, regularization, total variation, duality, subdifferential 
    \\
    \textbf{AMS subject classifications:} 28A75, 49J45, 49N15, 49Q20, 68T05  
\end{abstract}

\tableofcontents

\section{Introduction}

In recent years, machine learning algorithms and, in particular, deep neural networks have enjoyed unprecedented success; however, since more than a decade ago it is known that they are also susceptible to \emph{adversarial attacks} \cite{szegedy2014}, which are small carefully chosen perturbations of the input that lead to severe misclassifications. 
While this was initially a major concern for image classification applications in safety-critical domains, more recently, in \cite{zou2023universal} it was shown that adversarial attacks can also be crafted for LLMs. 
One prominent approach to improve robustness is \emph{adversarial training}, proposed in \cite{madry2018towards} building on earlier ideas from \cite{goodfellow2015explaining}.
The key idea is to replace the standard training risk by the risk evaluated on adversarial attacks, thereby simulating the presence of an adversary already during training. 
Mathematically speaking, the resulting method is a robust optimization problem of the form
\begin{align}\label{eq:Adversarial Training}
	\inf_{u\in\H}\IE_{(x,y)\sim\mu}\left[\sup_{\tilde{x}\in\ball{\eps}{x}}\ell (u(\tilde{x}),y)\right],
\end{align}
where $\mu\in \P(\X\times\Y)$ is a probability measure defined on the cartesian product of the feature space $\X$ and the label space $\Y$, modeling the distribution of training data. 
The hypothesis class $\H$ may be any family of measurable functions from $\X$ to $\Y$ appropriate for the specific task. 
The value of $\eps\geq 0$ is called the \emph{adversarial budget} and indicates how ``strong'' the adversarial attacks are allowed to be. Depending on the specific machine learning task that is considered, one chooses an appropriate \emph{loss function} $\ell\colon \Y\times\Y\to\IR$. 

With these ingredients at hand, the rationale behind \eqref{eq:Adversarial Training} is to enforce that for any training example $(x,y)\sim\mu$, an optimal hypothesis satisfies $u(\tilde x)\approx y$ for all $\tilde x\in\ball{\eps}{x}$ and is thereby robust for perturbations of size at most $\eps$.

In contrast, the mathematical understanding of \eqref{eq:Adversarial Training} is highly non-trivial, as we will explain in the following:
Proving existence is difficult due to the presence of the supremum in the objective function.
For non-parametric binary classification, where $\Y=\{0,1\}$, $\H$ is a class of measurable functions and $\ell(y_1,y_2)=\abs{y_1-y_2}$ is the $0$-$1$-loss, the existence of solutions is shown in \cite{awasthi2021existence} and \cite{bungert2023geometry}. The generalization to multiclass classification is treated in \cite{trillos2024existence} and the case of general loss functions was treated in \cite{siethoff2025existence}.
Uniqueness of solutions can not be expected in general but regular solutions can be constructed for positive $\eps$, see \cite{bungert2023geometry}.
The asympotics of \eqref{eq:Adversarial Training} for the adversarial budget $\eps$ tending to zero were studied using Gamma-convergence in \cite{bungert2024gamma}, relations to mean curvature flow were established in \cite{bungert2024mean,trillos2022adversarial}, optimal transport perspectives on adversarial training can be found in \cite{trillos2024optimal,trillos2023multimarginal}, and an overview of recent developments on this topic is given in \cite{trillos2023analytical}.
With respect to adversarial attacks, i.e., maximizers of $\sup_{\tilde x\in\ball{\eps}{x}}\ell(u(\tilde x),y)$ we would like to point to \cite{weigand2026adversarial} where the authors connect the project gradient ascent methods typically used for approximating adversarial attacks numerically to metric gradient flows. 
Note that also the algorithmic minimization of \eqref{eq:Adversarial Training} is challenging since the presence of the supremum destroys differentiability of the objective.
This fact is largely ignored by the machine learning community since the advent of automatic differentiation allows one to apply gradient-based optimizers to basically any function.
By the classical Danskin theorem, however, this only makes sense if the supremum in \eqref{eq:Adversarial Training} is attained at a unique point in the closure of $\ball{\eps}{x}$; an assumption that is too strong for most real life situations.

In this paper we make a first step towards a rigorous treatment of this optimization problem by using duality and techniques from non-smooth convex analysis. 
We build on the key insight from \cite{bungert2023geometry} which, in the binary classification setting, connects adversarial training to a regularization problem of the form
\begin{align}
    \inf_{u\in\H}\IE_{(x,y)\sim\mu}\big[\ell(u(x),y)\big]+\lambda \mathcal R(u),
\end{align}
where $\mathcal R$ is the regularization functional and $\lambda>0$ a parameter that controls the strength of regularization. 
In classical linear parametric learning settings the connection between adversarial training and regularization has been explored, for example, in \cite{blanchet2019robust,blanchet2021statistical}.
The regularization interpretation identified in \cite{bungert2023geometry} features a nonlocal notion of length of the decision boundary.
Their precise result states that in the non-parametric binary setting minimizers of \eqref{eq:Adversarial Training} exist and can be constructed from those of the convex optimization problem
\begin{align}\label{eq:reg problem}
	\inf_{\substack{u\in L^\infty(\X;\nu)\\0\leq u\leq 1,\,\nu\mhyphen\text{a.e.}}} \IE_{(x,y)\sim\mu} \big[\vert u(x)-y\vert\big]+\eps\,\nu\mhyphen\tv_\eps(u)
\end{align}
where $\nu$ is a reference measure that dominates the class-conditional data distributions and is supported on an $\eps$-neighborhood around their support. 
The nonlocal \emph{adversarial total variation} is given by
\begin{align}\label{eq:TV functional}
	\nu\mhyphen\tv_\eps(u)\coloneq\int_\X \frac{\nesssup{u}{\ball{\eps}{x}}-u(x)}{\eps}\dd\vrho_0(x)+\int_\X \frac{u(x)-\nessinf{u}{\ball{\eps}{x}}}{\eps}\dd\vrho_1(x),
\end{align}
where $\vrho_i \coloneq \mu(\filll\times\{i\})$ for $i\in\{0,1\}$ are the (un-normalized) class-conditional distributions of the points with label $i$.
Until now, several properties of this functional has been examined in \cite{bungert2023geometry,bungert2024gamma,bungert2024mean}, see also \cite{trillos2024existence,siethoff2025existence} for the multi-class case.

In particular, in \cite{bungert2024gamma} it was shown that if $\X=\Omega\subset\IRN$ is an open subset of $\IRN$ the nonlocal total variation Gamma-converges to an anisotropic local total variation of the form
\begin{align*}
     u\mapsto\int_\Omega \beta\left(x,\frac{Du}{\abs{Du}}(x)\right)\d\abs{Du}(x),\qquad u \in BV(\Omega),
\end{align*}
where the weight function $\beta\colon\Omega\times\mathbb S^{N-1}\to\R$ reduces to $\beta(x,\nu)=\vrho_0(x)+\vrho_1(x)$ if the class-conditional distributions $\vrho_0,\vrho_1$ possess a continuous density with respect to the Lebesgue measure.
Here $BV(\Omega)$ is the space of functions of bounded variation on $\Omega$, referring to all functions $u\in L^1(\Omega)$ such that the distributional gradient of $u$ is a finite vector-valued Radon measure $Du$, see the monograph \cite{ambrosio2000functionsofBV} for precise definitions and properties.
Among other properties, like the coarea formula and positive homogeneity of $\nu\mhyphen\tv_\eps$, this Gamma-convergence justifies the nomenclature ``total variation''. 
Note that a local total variation of the form $\int_\Omega \rho(x)\d\abs{Du}(x)$ for sufficiently regular $\rho$ can be dualized as follows \cite{ambrosio2000functionsofBV}:
\begin{align}
    \int_\Omega \rho(x)\d\abs{Du}(x)
    =
    \sup\left\lbrace
    -\int_\Omega u(x)\div\left(\rho(x)\phi(x)\right)\d x
    \st 
    \phi \in C_c^\infty(\Omega;\IRN),\,\abs{\phi(x)}\leq 1\;\forall x\in\Omega
    \right\rbrace.
\end{align}
In this and other similar dualization formulas we refer to the set of functions over which the supremum is taken as the set of test functions, which in this case is the set of compactly supported smooth vector fields on $\IRN$.
 
Using standard statements from convex analysis (cf. \cite{bredies2016pointwise,bungert2021nonlinear}) one can use this dualization to show that the subdifferential of the functional at some $u\in L^p(\Omega)\cap BV(\Omega)$ is given by all functions $u^*$ in the closure of the set
\begin{align}
    \left\lbrace 
    \div\left(\rho\phi\right)
    \st 
    \phi \in C_c^\infty(\Omega;\IRN),\,\abs{\phi(x)}\leq 1\;\forall x\in\Omega
    \right\rbrace 
\end{align} 
in $L^q(\Omega)$ with $\frac{1}{p}+\frac{1}{q}=1$ which satisfy
\begin{align*}
    \int_\Omega u^*(x)u(x)\d x = \int_\Omega \rho(x)\d\abs{Du}(x).
\end{align*}
Using the theory of Anzellotti pairings \cite{anzellotti1983pairings}, it can further be shown that any subgradient can be written as $u^*=-\div\left(\rho\phi\right)$ where $\phi$ is an essentially bounded vector field that, in a suitable generalized sense, possesses a divergence and is parallel to the distributional gradient $Du$ of $u$, see \cite{bredies2016pointwise} for all details.
This subdifferential characterization has many applications, most notably, it is at the heart of the solution concept of the total variation flow, see \cite{andreu2001minimizing,andreu2002some}.

Another important application of the dualization formula for the total variation lies in numerical optimization. 
For instance, the seminal Rudin--Osher--Fatemi (ROF) model consists of the minimization of 
\begin{align*}
    L^2(\Omega)\ni u \mapsto 
    \frac12\int_\Omega\abs{u-f}^2\d x
    +
    \lambda\int_\Omega\d\abs{Du},
\end{align*}
where $f\in L^2(\Omega)$ is a given noisy image that needs to be denoised and $\lambda>0$ is a regularization parameter.
Here, we set $\int_\Omega\d\abs{Du}=\infty$ for $u\in L^2(\Omega)\setminus BV(\Omega)$.
The optimality conditions for this optimization problem are given by $u-f+\lambda u^* = 0$ where $u^*\in L^2(\Omega)$ is a subgradient of the total variation at $u$, i.e., $p=-\div \phi$ for a suitable vector field $\phi$.
This optimality condition together with the subdifferential characterization was used in \cite{chambolle2004algorithm} to derive a seminal algorithm for approximating minimizers of the image denoising problem above and similar related problems in a provably convergent way.
Later the famous \emph{primal-dual algorithm} was devised in \cite{chambolle2011primaldual} which applies the dualization more directly to derive a non-smooth optimization algorithm. 
For example, we can reformulate the ROF model as the saddle-point problem
\begin{align}
    \inf_{u\in L^2(\Omega)}
    \sup_{\substack{\phi\in C_c^\infty(\Omega;\IRN)\\\abs{\phi(x)}\leq 1\;\forall x\in\Omega}}
    \frac{1}{2}\int_\Omega\abs{u-f}^2\d x
    -
    \lambda
    \int_\Omega u \div\phi \d x.
\end{align}
Note that---once discretized to a finite dimensional problem---the minimization problem in $u$ is smooth and convex whereas the maximization problem in $\phi$ is smooth and concave with a convex and closed constraint set of test functions. 
Hence, one can apply projected gradient methods to both variables and obtain the primal-dual algorithm from \cite{chambolle2011primaldual}.

It should be noted that the very same ideas can be applied to adversarial training problem in \eqref{eq:reg problem} (where the non-smoothness of the loss function does not pose a problem and can be ignored for the time being) once the adversarial total variation $\nu\mhyphen\tv$ is dualized.

\subsection{Preliminaries from Convex Analysis}
We briefly review some basics from convex analysis, which shall appear frequently in this paper. 
If $X$ is a Banach space with continuous dual $X^*$, and $f\colon X\to(-\infty,\infty]$ is convex, we define its \emph{subdifferential} at $x\in X$ as
\begin{align}
    \partial f(x) = \{x^*\in X^*\st f(x)+\langle x^*,y-x\rangle_{X^*\times X}\leq f(y)\quad\forall y\in X\},
\end{align}
where $\langle \cdot,\cdot\rangle_{X^*\times X}$ denotes the dual pairing of $X^*$ and $X$. The elements of $\p f(x)$ are called \emph{subgradients} of $f$.
Furthermore, we define the \emph{convex conjugate} $f^*\colon X^*\to\R$ of $f$ via
\begin{align}
    f^*(x^*) \coloneq \sup_{x\in X}\langle x^*,x\rangle_{X^*\times X} - f(x),\qquad x^*\in X^*,
\end{align}
which is a convex and lower semicontinuous function.
The two notions are connected through the Fenchel--Young inequality, see \cite[Proposition 5.1]{ekeland1999}:
\begin{align}
    \langle x^*,x\rangle_{X^*\times X} \leq f(x)+f^*(x^*)
    \text{ with equality if and only if }x^*\in\partial f(x).
\end{align}
The subdifferential $\partial f(x)$ is convex and weakly-* closed.
Furthermore, using the canonical embedding $X\hookrightarrow X^{**}\coloneq(X^*)^*$ the biconjugate $f^{**}\coloneq(f^*)^*$ satisfies $f^{**}\vert_{X}\leq f$ with equality if and only if $f$ is convex and lower semicontinuous.

\subsection{Main Results and Discussion}

The purpose of this paper is to derive dualization formulas, similar to the one for the classical total variation, for the adversarial total variation in two different scenarios:
\begin{enumerate}
    \item First, we let the base space be $C_0(\X)$, the space of continuous functions on a proper metric space $\X$ vanishing at infinity.
    In this case, the subdifferential will lie in the dual space of that space which is the space of finite signed Radon measures on $\X$.
    In this setting the reference measure $\nu$ has no bearing and it holds for all $u\in C_0(\X)$ that
    \begin{align}
        \nu\mhyphen\tv(u) =
        \tv(u) \coloneq
        \int_\X
        \frac{\max_{{\clball{\eps}{x}}}u-u(x)}{\eps}
        \d\vrho_0(x)
        +
        \int_\X
        \frac{u(x)-\min_{{\clball{\eps}{x}}}u}{\eps}
        \d\vrho_1(x),
    \end{align}
    where $\clball{\eps}{x}$ is the closed ball around $x\in\X$.
    Furthermore, the total variation can be dualized as follows
    \begin{align}
        \tv_\eps(u)=\max_{\underline{\mathbf{m}}\in\MM\times\MM}\left\lbrace
        -\int_\X u\dd \div_\eps^{\Frho}[\underline{\mathbf{m}}]\right\rbrace.
    \end{align}
    Here, $\Frho\coloneq(\rho_0,\rho_1)$ is the collection of the two class-conditional distributions, $\div_\eps^\Frho[\underline{\mathbf{m}}]$ is a finite signed measure, and $\div_\eps^\Frho$ is a suitable notion of nonlocal weighted divergence that is dual to the nonlocal gradient $\grad_\eps[u](x,y)\coloneq\frac{u(y)-u(x)}{\eps}$.
    The set of test ``functions'' is a set of random walks $\MM$, i.e., a family of probability measures $\mathbf{m}=\{m_x\st x\in\Omega\}$ satisfying $\supp m_x \subset \clball{\eps}{x}$ as well as certain measurability conditions.
   
    Subgradients are the nonlocal divergences of random walks which attain the maximum above.

    These results are phrased in the language of metric random walk spaces and should be compared to the seemingly similar but fundamentally different setting in \cite{mazon2020}.

    See \cref{thm:u cont-dual representation,prop:u cont-subdiff characterization} for the rigorous statements.
    \item Second, we consider $L^\infty(\Omega)$ as base space where $\Omega\subset\IRN$ is an open subset of $\IRN$ and the class-conditional distributions have densities with respect to the Lebesgue measure.
    The reason for considering this setting, as well, is that minimizers of \eqref{eq:reg problem} naturally lie in $L^\infty(\Ome)$, as proved in \cite{bungert2023geometry}.

    Choosing the reference measure $\nu$ as the $N$-dimensional Lebesgue measure on $\Omega$ we have
    \begin{align}
        \nu\mhyphen\tv_\eps(u)
        &=
        \int_\Omega
        \frac{\esssup{u}{\ball{\eps}{x}\cap\Ome}-u(x)}{\eps}
        \vrho_0(x)\d x
        +
        \int_\Omega
        \frac{u(x) - \essinf{u}{\ball{\eps}{x}\cap\Ome}}{\eps}
        \vrho_1(x)\d x
        \\
        &=
        \sup_{\FPsi\in\PP\times\PP}\left\lbrace
        -\int_\Ome u\div_\eps\left[\FPsi;\Frho\right]\dd x\right\rbrace.
    \end{align}
    While being structurally very similar to the previous dualization result, it is important to notice that here the test functions lie in $\mathfrak{P}\times \PP$ where $\mathfrak{P}$ is a suitable subset of $L^1(\Omega\times\Omega)$ and also $\div_\eps[\FPsi;\Frho]$ is a $L^1$-function.
    For notational convenience we define the antisymmetric pairing $[\FPsi;\Frho]\coloneq \Psi^0\vrho_0-\Psi^1\vrho_1$.
    Hence, the set of test functions is significantly smaller.
    However, unlike in the first case, where the maximizing random walk is always attained thanks to the compactness implied by Prokohov's theorem, here, due to a lack of compactness of this set of nonlocal divergences, the supremum is usually not attained. The supremum is attained in the weak-* closure of this set in the dual space of $L^\infty(\Omega)$ which is the space of bounded and finitely additive measures that are absolutely continuous with respect to the Lebesgue measure \cite{dunford1988}.
    See \cref{thm:u bounded-dual representation} for the rigorous statement.
\end{enumerate}

We would like to remark that the two settings we consider in this paper are complementary to each other. 
For the base space $C_0(\X)$, which is separable for proper $\X$, the subdifferential lies in the nice space of signed measures and subgradients can be explicitly characterized as nonlocal divergences of random walks that attain the maximum in the dualization formula.
In contrast, for the larger space $L^\infty(\Omega)$ which is not separable subgradients cannot be characterized explicitly.
However, in this case the dualization is taken over the nice set of $L^1$-functions on $\Omega\times\Omega$ instead of just random walks. 
Proving this requires us to use the Euclidean structure to establish certain joint measurability statements.

It should also be noted that for the first setting we can resort to the standard form of the measurable maximum theorem to perform the dualization while in the second setting, due to the lack of compactness, we have to generalize this theorem, see \cref{adapted measurable max} in the appendix.

\subsection{Outline}
The rest of the paper is organized as follows. 
In \cref{reformulation cont funct} we derive a duality formulation of the adversarial total variation defined on the set of continuous functions vanishing at infinity, which subsequently leads to a characterization of its subdifferential. 
In \cref{u cont-dual reformulation} we first obtain dual representations of the maximum and minimum of a continuous function on $\eps$-balls around each data point and unify them into random walks on the whole domain via the measurable maximum theorem. Based on this reformulation, \cref{u cont-nonlocal gradient} introduces a dualization involving a nonlocal gradient and divergence. 
Finally, in \cref{u cont-subdifferential} we characterize the subdifferential of $\tv_\eps$ in terms of nonlocal divergence measures arising from this construction.

\cref{reformulation measurable funct} is devoted to the duality formulation of the adversarial total variation defined on essentially bounded functions. In \cref{u bounded-dualisation of esssup} we establish a dual representation of the essential supremum and infimum in a general measure space setting. 
We then restrict ourselves to essentially bounded functions on $\IRN$ in \cref{u bounded-dual reformulation} and derive an analogous duality formulation to that of \cref{u cont-dual reformulation}, again involving nonlocal gradient and divergence operators.
We also show the consistency of these nonlocal operators with their classical local counterparts in \cref{u bounded-nonlocal gradient}. 
Lastly, in \cref{limit characterization} we provide a limit characterization of subgradients of the adversarial total variation in this setting.

Finally, we conclude the paper in \cref{conclusions} with a brief discussion of possible directions for future research.

\cref{adapted measurable max} in the appendix provides a complete proof of the adapted measurable maximum theorem for closed-valued correspondences, which is used in \cref{u bounded-dual reformulation}.
\section{Dualization for \texorpdfstring{$C_0(\X)$}{C0(X)}}\label{reformulation cont funct}

In this section we rewrite the nonlocal total variation functional in a duality formulation, using $C_0(\X)$ for a proper metric space as base space. In \cref{u cont-dual reformulation} we show that the set of test functions is given by random walks on metric spaces with additional support conditions, and in \cref{u cont-nonlocal gradient} we define a nonlocal gradient and divergence such that an integration-by-parts identity holds for the dual formulation of $\tv_\eps$. Finally, in \cref{u cont-subdifferential} we characterize the subgradients in $\partial\tv_\eps(u)$ using the structure induced by the dual reformulation of the functional.

Throughout this section we consider a continuous function $u\in C(\X)$ on a metric space $(\X,\mathrm{d})$, which is assumed to be proper unless stated otherwise. A metric space $\X$ is called \emph{proper} if every closed and bounded subset $K\subset\X$ is compact, for instance $\X=\overline{\Ome}\subset\IRN$. In particular, every proper metric space is Polish and locally compact.
Furthermore, $C_0(\X)$ refers to the subspace of functions in $C(\X)$ which vanish at infinity. 
For compact $\X$ it holds $C(\X)=C_0(\X)$.

In the following we assume that the reference measure $\nu$ satisfies the assumptions imposed in \cite{bungert2023geometry}. In particular, $\nu$ is a $\sigma$-finite measure on $\X$ that is locally doubling such that $\vrho_0,\vrho_1\ll\nu$ and the following support condition holds
	\begin{align}\label{eq:nu support}
		\set{x\in\X}{\mathrm{dist}(x,\supp\vrho_0)\leq \eps}\cup\set{x\in\X}{\mathrm{dist}(x,\supp\vrho_1)\leq \eps}\subseteq \supp\nu.
	\end{align}
Under these assumptions, $\nu\mhyphen\tv_\eps$ is independent of the particular choice of the reference measure, and we therefore omit its explicit dependence in the notation. To see this, note that we have $\nesssup{u}{{\clball{\eps}{x}}}=\max_{{\clball{\eps}{x}}} u$ for $\vrho_0$-a.e.\ $x\in\X$ and $\nessinf{u}{{\clball{\eps}{x}}}=\min_{{\clball{\eps}{x}}} u$ for $\vrho_1\mhyphen$a.e. $x\in \X$ since ${\clball{\eps}{x}}\subset\X$ is compact and $\nu$ is supported on the whole ball $\clball{\eps}{x}$.
Hence, in the whole of \cref{reformulation cont funct} we denote
\begin{align}
    \tv_\eps \coloneq \nu\mhyphen\tv_\eps
\end{align}
where $\nu\mhyphen\tv_\eps$ was defined in \eqref{eq:TV functional}.
\subsection{Dual Reformulation for Continuous Functions}\label{u cont-dual reformulation}
To obtain a duality formulation, we first derive a dual representation of the maximum and minimum for each data point, which is achieved by testing with probability measures. We then extract a measurable selector that defines a random walk on $\X$, i.e., a $\BB_\X$-measurable map from $\X$ to the set of probability measures on $\X$, where $\BB_\X$ denotes the Borel $\sigma$-algebra on $\X$. We show that maximizing (respectively minimizing) over such random walks yields the desired result. In this framework we obtain a dual representation of both parts of the nonlocal total variation. Let us briefly introduce the most important notions for the following statement. First, a random walk $\mathbf{m}=\set{m_x\in\P(\X)}{x\in\X}\in RW(\X)$ can be understood as a family of probability measures on $\X$ which depend measurably on $x$; the complete definition is given in \cref{def:random walk}. Next, the set of test functions is now defined as
\begin{align}
    \MM\coloneq \set{
    \mathbf{m}\in  
    RW(\X)
    }{\supp m_x \subset \clball{\eps}{x}\quad
    \text{for $\rho$-almost every }
    x\in\X
    }.
\end{align}
To be able to apply the measurable maximum theorem later on, we need the following mild technical assumption.
\begin{assumption}\label{ass:T function assumption}
    For any $x\in\X$ and $0<r<\eps$ there exists a mapping
\begin{align}
    T_{x,r}\colon {\clball{\eps}{x}}\to{\clball{\eps-r}{x}}\text{ measurable such that }\dd(T_{x,r}(y),y)\leq r\text{ for all }y\in{\clball{\eps}{x}}.
\end{align}
\end{assumption}
Showing that \cref{ass:T function assumption} holds in a general metric space is rather difficult. However, in several important settings the mapping can be constructed explicitly through a map that suitably shrinks a ball with radius $\eps$ to one with radius $\eps-r$.
\begin{example}{}\label{ex:support shifting}
    The following two settings satisfy the \cref{ass:T function assumption}.
    \begin{enumerate}[label=(\roman*)]
        \item Let $\X=\Ome\subset\IRN$ bounded domain equipped with the $\ell_p$-distance $\dd(x,y)\coloneq\abs{x-y}_p$ for $p\in[1,\infty]$. 
        If $\Ome$ is convex, then the mapping is given by
        \begin{align}\label{eq:Transport definition}
            T_{x,r}\colon {\clball{\eps}{x}\cap\Ome}\to{\clball{\eps-r}{x}\cap\Ome}, \quad T_{x,r}(y)= \frac{\eps-r}{\eps}(y-x)+x.
        \end{align}
        The measurability is given due to its continuity and a short calculation shows that we have $\abs{T_{x,r}(y)-y}\leq r$ for $y\in{\clball{\eps}{x}\cap\Ome}$.
        \item Let $\X$ be a uniquely geodesic space where for any $y\in {\clball{\eps}{x}}$ there is a unique geodesic $\gamma_{x,y}\colon[0,\eps]\to\X$ such that $\gamma_{x,y}(0)=x$ and $\gamma_{x,y}(1)=y$. Then, the mapping given by
        \begin{align}
            T_{x,r}\colon {\clball{\eps}{x}}\to{\clball{\eps-r}{x}},\quad T_{x,r}(y)=\gamma_{x,y}(\eps-r)
        \end{align}
        satisfies \cref{ass:T function assumption} if it is measurable. This is, for example, true for smooth manifolds---thanks to the theory of Jacobi fields which shows that $T_{x,r}$ is even continuous---but needs to be checked in general.
    \end{enumerate}
\end{example}
The following is the main theorem of this section and a key ingredient for dualizing the total variation.
\begin{theorem}[Dual reformulation]\label{thm:u cont-dual reformulation of tv}
    Let $\X$ be a proper metric space satisfying \cref{ass:T function assumption}, let $\mu\in \P(\X\times\{0,1\})$ be a probability measure and $\vrho_i\coloneq \mu(\filll\times\{i\})$ for $i\in\{0,1\}$ the respective conditional distributions.
    Then, for  $u\in C(\X)$ we have
    \begin{align}
        \int_\X \max_{{\clball{\eps}{x}}}u \dd \vrho_0(x)=\max_{\mathbf{m}\in\mathfrak{M}}\int_\X\int_\X u(y)\dd m_x(y)\dd\vrho_0(x)
    \end{align}
    and
    \begin{align}
        \int_\X \min_{{\clball{\eps}{x}}}u \dd \vrho_1(x)=\min_{\mathbf{m}\in\mathfrak{M}}\int_\X\int_\X u(y)\dd m_x(y)\dd\vrho_1(x).
    \end{align}
\end{theorem}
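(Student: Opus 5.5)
The plan is to prove the two inequalities separately, treating only the maximum statement; the minimum statement follows by applying it to $-u$ with $\vrho_1$ in place of $\vrho_0$.

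\textbf{Easy inequality.} Fix any $\mathbf{m}\in\MM$. For $\vrho_0$-a.e.\ $x$ we have $\supp m_x\subset\clball{\eps}{x}$, so
\begin{align}
\int_\X u(y)\dd m_x(y)\le \max_{\clball{\eps}{x}}u .
\end{align}
Integrating in $x$ gives "$\ge$" in the theorem, provided the right-hand side is well defined. Two points need checking here. First, $x\mapsto\max_{\clball{\eps}{x}}u$ is upper semicontinuous, because $u$ is continuous and balls are compact with $\limsup_{x_n\to x}\clball{\eps}{x_n}\subset\clball{\eps}{x}$. Second, $x\mapsto\int u\dd m_x$ is measurable by the measurability built into $RW(\X)$. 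If $u$ is unbounded, the integrals are understood in the extended sense, or one restricts to the case where they are finite.

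\textbf{Reverse inequality via a measurable selector.} The aim is to build $\mathbf{m}\in\MM$ with $m_x=\delta_{y(x)}$, where $y(x)\in\argmax_{\clball{\eps}{x}}u$ is chosen measurably in $x$. Then $\int u\dd m_x=\max_{\clball{\eps}{x}}u$ pointwise, and this choice attains the supremum, so it is a maximum. Consider the correspondence $\phi(x)=\clball{\eps}{x}$. It has nonempty compact values. It is upper hemicontinuous by properness and the triangle inequality. The point needing care is lower hemicontinuity, or at least weak measurability, which is what the measurable maximum theorem (Aliprantis--Border, Thm.~18.19) requires.

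\textbf{Using \cref{ass:T function assumption} for lower hemicontinuity.} Let $x_n\to x$ and $y\in\clball{\eps}{x}$, and put $r_n=\dd(x_n,x)$. Then $T_{x,r_n}(y)\in\clball{\eps-r_n}{x}\subset\clball{\eps}{x_n}$ and $\dd(T_{x,r_n}(y),y)\le r_n\to0$. This gives points of $\phi(x_n)$ converging to $y$, so $\phi$ is lower hemicontinuous and hence continuous. The assumption is exactly what this step needs: in general metric spaces closed balls need not vary lower hemicontinuously, because the closed ball can be larger than the closure of the open ball.

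\textbf{Concluding.} The measurable maximum theorem, applied to the Carathéodory (indeed continuous, $x$-independent) function $(x,y)\mapsto u(y)$ and the continuous compact-valued correspondence $\phi$, yields a Borel selector $x\mapsto y(x)\in\argmax_{\phi(x)}u$. The map $x\mapsto\delta_{y(x)}$ is then measurable into $\P(\X)$, since $x\mapsto\int f\dd\delta_{y(x)}=f(y(x))$ is measurable for every bounded continuous $f$. Hence $\mathbf{m}=\{\delta_{y(x)}\}_{x\in\X}$ is a random walk lying in $\MM$, which attains the maximum and gives "$\le$".

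The main obstacle is the lower hemicontinuity or measurability of the ball correspondence, which the $T$-maps resolve as above. A secondary obstacle is checking that $\delta_{y(x)}$ satisfies whatever measurability is demanded in \cref{def:random walk}.
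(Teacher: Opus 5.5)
Your proposal is correct, but it takes a different route from the paper.

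\textbf{The paper's route.} The paper never selects points. It lifts everything to $\P(\X)$ and considers the correspondence $x\mapsto\P(\clball{\eps}{x})$. It proves weak measurability by pushing a measure $m$ forward under $T_{x,r}$ and bounding $W_1\bigl(m,(T_{x,r})_\sharp m\bigr)\le r$ (\cref{lem:u cont-weak measurable correspondence}). It then uses Prokhorov for compact values and the Polishness of $(\P(\X),\tau_w)$. Finally it applies the measurable maximum theorem to $(x,m)\mapsto\int u\,\mathrm{d}m$, which selects a maximizing measure $\Psi(x)$ taken as $m_x$ (\cref{prop:u cont-measurable selector,lem:u cont-dual representation}). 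Measurability of $x\mapsto\max_{\clball{\eps}{x}}u$ also comes out of that theorem, where you instead use upper semicontinuity, which is valid by properness.

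\textbf{Your route.} You exploit that the maximum is attained at a Dirac mass. You work with the point-valued ball correspondence in $\X$ itself and obtain a Borel argmax selector $y(x)$. Then $m_x=\delta_{y(x)}$, and the random-walk condition is immediate because $m_x(A)=\Ie_A(y(x))$.

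\textbf{How the two compare.} Your lower hemicontinuity argument is the point-level version of the paper's pushforward argument, via $\mathrm{d}(T_{x,r_n}(y),x_n)\le(\eps-r_n)+r_n=\eps$. It is more economical in three ways.
\begin{itemize}
\item It uses only the existence of the points $T_{x,r}(y)$, not the measurability of $T_{x,r}$.
\item It avoids Prokhorov, Wasserstein distances and the topology of $\P(\X)$.
\item Upper hemicontinuity is not even needed, since lower hemicontinuity alone gives weak measurability.
\end{itemize}
It also directly yields what \cref{rmk:Gamma_eps measurable} works to establish: when the argmax is a singleton, your selector is $\Gamma_\eps$ itself.

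What the paper's measure-level formulation buys is structural. It sets up the template of a correspondence of admissible test objects, a Carath\'eodory objective, and a measurable (near-)maximizer. That template is reused in the $L^\infty$ setting, where maxima are not attained, Dirac masses are not admissible test functions, and one must select approximate maximizers among densities via \cref{thm:adapted measurable maximum}.
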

\begin{proof}[Proof of \cref{thm:u cont-dual reformulation of tv}]
    The application of \cref{prop:u cont max dualisation} and \cref{lem:u cont-dual representation} below leads to the first equality.
    The second equality is obtained in an analogous way.
\end{proof}
As a first step, to obtain a dual formulation of the maximum over $\clball{\eps}{x}$, we use probability measures as test functions and note that the maximum is attained by Dirac measures concentrated at maximizers.
\begin{proposition}\label{prop:u cont max dualisation}
    Let $\X$ be a proper metric space. Then, for every $u\in C(\X)$ and $x\in\X$ we have
    \begin{align}
        \max_{{\clball{\eps}{x}}} u=\max_{m\in\P({\clball{\eps}{x}})}\int_{{\clball{\eps}{x}}} u \dd m.
    \end{align}
\end{proposition}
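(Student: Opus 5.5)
Since $\X$ is proper, the closed ball $\clball{\eps}{x}$ is closed and bounded, hence compact. The continuous function $u$ therefore attains its maximum on $\clball{\eps}{x}$ at some point $y^*\in\clball{\eps}{x}$, so the left-hand side is well defined. The plan is to prove the two inequalities ``$\geq$'' and ``$\leq$'' between the left-hand side and the supremum over $\P(\clball{\eps}{x})$. Along the way we will exhibit a maximizing measure, which justifies writing $\max$ rather than $\sup$ on the right.

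For the upper bound, take any $m\in\P(\clball{\eps}{x})$. The function $u$ is bounded on the compact ball, so it is $m$-integrable, and $u(y)\leq \max_{\clball{\eps}{x}}u$ for every $y$ in the ball. Integrating this against $m$ and using $m(\clball{\eps}{x})=1$ gives
\begin{align}
    \int_{\clball{\eps}{x}} u \dd m \leq \max_{\clball{\eps}{x}} u .
\end{align}
Hence the supremum over all such $m$ is at most the maximum.

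For the lower bound and attainment, take the Dirac measure $\delta_{y^*}$. It is a Borel probability measure supported in $\clball{\eps}{x}$, so $\delta_{y^*}\in\P(\clball{\eps}{x})$, and
\begin{align}
    \int_{\clball{\eps}{x}} u\dd\delta_{y^*}=u(y^*)=\max_{\clball{\eps}{x}}u .
\end{align}
Thus the supremum equals the maximum and is attained at $\delta_{y^*}$, which completes the proof.

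There is no real obstacle here. The only points needing care are that properness yields compactness of the ball, and hence attainment of the maximum and integrability of $u$, and that Dirac measures belong to $\P(\clball{\eps}{x})$. One could alternatively observe that $m\mapsto\int u\dd m$ is weakly continuous on the weakly compact set $\P(\clball{\eps}{x})$ (by Prokhorov's theorem), but the Dirac argument already gives attainment directly.
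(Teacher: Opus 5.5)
Your proof is correct and matches the paper's argument step for step. Properness makes $\clball{\eps}{x}$ compact, so a maximizer $y_x$ exists; integrating $u\leq\max_{\clball{\eps}{x}}u$ against any probability measure gives one inequality, and the Dirac measure $\delta_{y_x}$ gives the reverse inequality together with attainment of the maximum.
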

\begin{proof}
    Since $u$ is continuous on $\X$ it is in particular a continuous function on ${\clball{\eps}{x}}\subset\X$ which is a compact subset due to the properness of $\X$. Hence, for all $x\in\X$ there exist $M_x\in\IR$ such that $\max_{{\clball{\eps}{x}}}u=M_x$ and $y_x\in{\clball{\eps}{x}}$ such that $u(y_x)=M_x$. Fix $x\in\X$ and let $m\in \P({\clball{\eps}{x}})$ to obtain
    \begin{align}
        \int_{{\clball{\eps}{x}}} u(y)\dd m(y)\leq M_x \cdot m({\clball{\eps}{x}})=\max_{{\clball{\eps}{x}}} u.
    \end{align}
    Since $m\in \P({\clball{\eps}{x}})$ is arbitrarily chosen, we have
    \begin{align}
        \max_{{\clball{\eps}{x}}} u\geq\max_{m\in\P({\clball{\eps}{x}})}\int_{{\clball{\eps}{x}}} u(y) \dd m(y).
    \end{align}
    For the other inequality define $\hat{m}\coloneq \delta_{y_x}$ which is a probability measure on ${\clball{\eps}{x}}$. Consequently,
    \begin{align}
        \max_{m\in\P({\clball{\eps}{x}})}\int_{{\clball{\eps}{x}}} u(y)\dd m(y)\geq \int_{{\clball{\eps}{x}}} u(y) \dd \hat{m}(y)=u(y_x)=\max_{{\clball{\eps}{x}}} u.
    \end{align}
    This completes the proof.
\end{proof}
Since the dual space of continuous functions consists of measures, we need to work with a suitable topology. There are several ways to define convergence of measures. 
The following definition extends to one from \cite[Chapter 13]{klenke2020} to finite signed measures which we denote by $\M^\pm_f(\X)$.
\begin{definition}[Weak convergence of measures]\label{defi:weak measure convergence}
    Let $\X$ be a metric space and $m\in \M^\pm_f(\X)$. We say that $(m_n)_{n\in\IN}\subset\M_f^\pm(\X)$ \emph{converges weakly} to $m$ if
    \begin{align}
        \int_\X f\dd m_n\xrightarrow{n\to\infty}\int_\X f\dd m\qquad\text{for all }f\in C_b(\X),
    \end{align}
    where $C_b(\X)$ denotes the space of continuous and bounded functions on $\X$.
    The topology induced by weak convergence is called \emph{weak topology} $\tau_w$.
\end{definition}
To distinguish limits of measures from limits of functions, we write $m=\wlim{n\to\infty}m_n$ if $(m_n)_{n\in\IN}\subset \M_f^\pm(\X)$ converges weakly to a limit measure $m\in\M_f^\pm(\X)$.
\begin{remark}[Measure convergence from a functional analytic perspective]
Note that the term weak convergence may be slightly misleading, since $\tau_w$ corresponds to the weak-* topology in the functional analytic sense. However, to remain consistent with the literature, we keep this terminology.
\end{remark}
Next, we define a set of test functions that is independent of $x$ by taking the maximum over families of measures instead of selecting each one individually. For this purpose, the family must be measurable in $x$ to allow integration over $\X$. To construct such a family, we rely on measurable selector functions of correspondences. We briefly introduce the most relevant definitions.
A \emph{correspondence} $\phi\colon S\corr\X$ from a set $S$ to a set $X$ is a set-valued function that assigns to each $s$ in $S$ a subset $\phi(s)$ of $X$. For a correspondence $\phi$ and a subset $A\subset X$ the \emph{lower inverse} $\phi^\ell$ is given by $\phi^\ell(A)\coloneq \set{s\in S}{\phi(s)\cap A\neq \emptyset}$. A \emph{selector} from a correspondence $\phi$ is a function $f\colon S\to X$ that satisfies $f(s)\in\phi(s)$ for each $s\in S$. If $(S,\Sigma)$ is a measurable space and $X$ a topological space, we say that a correspondence $\phi\colon S\corr X$ is \emph{weakly measurable} if $\phi^\ell(G)\in \Sigma$ for each open subset $G\subset \X$ and that $\phi$ is \emph{measurable} if $\phi^\ell(F)\in\Sigma$ for each closed subset $F\subset  X$. A complete introduction to this topic can be found in \cite{aliprantis2006, fonseca2007, rockafeller1998variational}.
\begin{lemma}\label{lem:u cont-weak measurable correspondence}
    Let $\X$ be a Polish metric space and define the correspondence
    \begin{align}\label{eq:correspondence to measures}
        \psi\colon \X \corr \P(\X),\quad \psi(x)\coloneq\P({\clball{\eps}{x}}).
    \end{align}
    Under \cref{ass:T function assumption}, the correspondence $\psi$ is weakly measurable with respect to $\BB_\X$.
\end{lemma}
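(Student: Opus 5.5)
We need to show that for every open $G\subset\P(\X)$ (weak topology) the set $\psi^\ell(G)=\set{x\in\X}{\P(\clball{\eps}{x})\cap G\neq\emptyset}$ is Borel. The plan is to write $\psi^\ell(G)$ as a countable union of closed (or open) sets, exploiting that the ball radius can be slightly shrunk via \cref{ass:T function assumption}, and that $\P(\X)$ with the weak topology is separable and metrizable (Prokhorov/Lévy metric, $\X$ Polish).

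\textbf{Step 1: discretization.} Fix a countable dense set $\{\mu_k\}_{k\in\IN}\subset\P(\X)$ consisting of finitely supported measures with atoms in a countable dense set $D\subset\X$. I claim
\begin{align}
\psi^\ell(G)=\bigcup_{k\in\IN}\ \bigcup_{q\in\mathbb{Q}\cap(0,\eps)}\set{x\in\X}{\mu_k\in G,\ \supp\mu_k\subset \clball{\eps-q}{x}}\ \cup\ \text{(correction)},
\end{align}
but rather than relying on exact support inclusion for dense atoms (which may fail near the boundary), I will use a cleaner description. Take $m\in\P(\clball{\eps}{x})\cap G$ and $0<r<\eps$ small. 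The pushforward $(T_{x,r})_\# m$ is supported in $\clball{\eps-r}{x}$. Since $\mathrm{d}(T_{x,r}(y),y)\le r$, we get $|\int f\,\mathrm{d}(T_{x,r})_\#m-\int f\,\mathrm{d}m|\le \mathrm{Lip}(f)\,r$ for bounded Lipschitz $f$. Hence $(T_{x,r})_\# m\to m$ weakly as $r\to0$, and lies in $G$ for small $r$. This is exactly where the assumption is used: it lets us replace $m$ by a measure supported strictly inside the ball. Next, replace each point of a finitely supported approximation by nearby points of $D$. Precisely, approximate $(T_{x,r})_\#m$ by finitely supported measures on $\clball{\eps-r}{x}$ (possible since it is a probability measure on a compact set), then move atoms to points of $D$ within distance $<r/2$. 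The result is some $\mu_k\in G$ with $\supp\mu_k\subset\ball{\eps}{x}$, open ball.

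\textbf{Step 2: Borel description.} Step 1 gives
\begin{align}
\psi^\ell(G)=\bigcup_{k:\ \mu_k\in G}\set{x\in\X}{\supp\mu_k\subset\ball{\eps}{x}} .
\end{align}
The inclusion ``$\supseteq$'' is trivial, and ``$\subseteq$'' is Step 1. For a finitely supported $\mu_k$ with atoms $y_1,\dots,y_n$, the set $\set{x}{\max_i \mathrm{d}(x,y_i)<\eps}$ is open. Thus $\psi^\ell(G)$ is a countable union of open sets, hence open and in particular Borel. In fact this shows lower hemicontinuity.

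\textbf{Main obstacle.} The delicate point is Step 1: measures in $G$ may charge the sphere $\partial\clball{\eps}{x}$, so naive dense approximants may leave the closed ball. Shrinking via $T_{x,r}$ handles this. One must also check two things: that the Lipschitz estimate gives weak convergence (via the Portmanteau theorem with bounded Lipschitz test functions), and that measurability of $T_{x,r}$ is all that is needed for the pushforward to be a Borel probability measure.
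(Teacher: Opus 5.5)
Your argument is correct. Its core is the same as the paper's proof: push $m$ forward under $T_{x,r}$ to obtain a measure concentrated on $\clball{\eps-r}{x}$ that is $r$-close to $m$, so it lies in $G$ for small $r$, and conclude that $\psi^\ell(G)$ is open. You get the closeness from bounded-Lipschitz test functions and the Portmanteau theorem. The paper instead uses $W_1(m,(T_{x,r})_\sharp m)\le r$ and the fact that $W_1$ metrizes $\tau_w$ on the ball.

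The difference is the second half. The paper observes directly, by the triangle inequality, that $(T_{x,r})_\sharp m$ is concentrated on $\clball{\eps-r}{x}\subset\clball{\eps}{y}$ for every $y\in\ball{r}{x}$. Hence $\ball{r}{x}\subset\psi^\ell(G)$ with no further work. You instead discretize to finitely supported measures with atoms in a countable dense set $D$, and write $\psi^\ell(G)$ as a countable union of open sets. That detour is valid but unnecessary: an arbitrary union of open sets is already open, and the shrunk measure itself already witnesses the neighborhood.

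The detour also brings bookkeeping you left implicit:
\begin{itemize}
\item the weights must be rational for $\{\mu_k\}$ to be countable;
\item the atom displacements must be smaller than both $r/2$ and the tolerance that keeps the measure inside $G$;
\item the first display with the ``(correction)'' term should simply be dropped.
\end{itemize}

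Your justification that $\clball{\eps-r}{x}$ is compact uses properness, which this lemma does not assume (only Polish). You do not need it: finitely supported measures with atoms in a dense set are weakly dense in the probability measures on any separable metric space.

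The one thing your version gains is that it avoids invoking $W_1$-metrization of the weak topology. Your Lipschitz estimate already gives the needed closeness in the bounded-Lipschitz metric.
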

\begin{proof}
    Let $G\subset \P(\X)$ be an open set with respect to $\tau_w$. We show that 
    \begin{align}
        \psi^\ell(G)=\set{x\in\X}{G\cap\psi(x)\neq \emptyset}=\set{x\in\X}{\exists m\in G\text{ s.t. }m\in\P({\clball{\eps}{x}})}
    \end{align}
    is open. For $x\in \psi^\ell(G)$ there exists $m\in G\cap\P({\clball{\eps}{x}})$ and by \cref{ass:T function assumption} for $0<r<\eps$ there is a measurable mapping $T_{x,r}\colon {\clball{\eps}{x}}\to{\clball{\eps-r}{x}}$ such that $\dd(T_{x,r}(y),y)\leq r$ for $y\in{\clball{\eps}{x}}$. We define the pushforward measure $\mu\coloneq (T_{x,r})_{\sharp }m\in\P({\clball{\eps-r}{x}})$ and note that $\mu\in\P({\clball{\eps}{y}})$ for all $y\in \ball{r}{x}$.
    Furthermore, the Wasserstein-1 distance between $m$ and $\mu$ can be upper-bounded as follows
    \begin{align}
        W_1(m,\mu)\leq \int_{{\clball{\eps}{x}}} \dd(T_r(y),y)\dd m(y)\leq\int_{{\clball{\eps}{x}}} r\dd m(y)=r.
    \end{align}
    Using that the Wasserstein-1 distance metrizes $(\P({\clball{\eps}{x}}),\tau_w)$, see \cite{villani2009}, we obtain the existence of $r_0>0$ such that $\mu\in G$ if $W_1(m,\mu)\leq r_0$. So by choosing $r_0\geq r>0$ small enough it follows that $\mu\in \P({\clball{\eps}{x}})\cap G$ and therefore $y\in\psi^\ell(G)$ for $y\in\ball{r}{x}$ which implies that also the lower inverse of $\psi$ is open and thus a Borel set. 
    Hence, the correspondence $\psi$ is by definition weakly measurable.
\end{proof}
We now establish the existence of a measurable selector for the correspondence $\psi$ from \cref{lem:u cont-weak measurable correspondence}. Moreover, the maximum over the $x$-dependent test sets is measurable in $x$, and the set of maximizers is nonempty and compact.
\begin{proposition}\label{prop:u cont-measurable selector}
    Let $\X$ be a proper metric space satisfying \cref{ass:T function assumption} and $u\in C(\X)$. Define $\mathfrak{m}\colon \X\to\IR$ by
    \begin{align}\label{eq:u cont-max problem}
        \mathfrak{m}(x)=\max_{m\in\P({\clball{\eps}{x}})}\int_{{\clball{\eps}{x}}}u(y)\dd m(y).
    \end{align}
    and the argmax set
    \begin{align}
        A^x\coloneq \set{m\in\P({\clball{\eps}{x}})}{\int_{{\clball{\eps}{x}}} u(y)\dd m(y)=\mathfrak{m}(x)}.
    \end{align}
    Then,
    \begin{enumerate}[label=(\roman*)]
        \item $\mathfrak{m}$ is measurable,
        \item $A^x$ is nonempty and compact in the $\tau_w$-topology for any $x\in\X$, and
        \item there exists a measurable selector function $\Psi\colon \X\to \P(\X)$, meaning that $\Psi(x)\in A^x$ for any $x\in\X$ and $\Psi$ is measurable, i.e., for any $\tau_w$-open subset $G\subset\P(\X)$ the preimage $\Psi^{-1}(G)$ lies in the Borel $\sigma$-algebra on $\X$.
    \end{enumerate}
\end{proposition}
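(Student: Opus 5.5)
The plan is to apply the measurable maximum theorem (Berge-type, e.g.\ \cite[Theorem 18.19]{aliprantis2006}) to the correspondence $\psi$ from \cref{lem:u cont-weak measurable correspondence} and the objective
\begin{align}
    f\colon \X\times\P(\X)\to\IR,\qquad f(x,m)\coloneq\int_\X u\dd m .
\end{align}
That theorem needs four ingredients: a separable metrizable target space, a weakly measurable correspondence with nonempty compact values, and a Carathéodory objective (measurable in $x$, continuous in $m$). Given these, it yields (i) measurability of $x\mapsto\max_{m\in\psi(x)}f(x,m)=\mathfrak m(x)$, (ii) that the argmax correspondence $x\mapsto A^x$ has nonempty compact values and is measurable, and (iii) a measurable selector of $A^x$.

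I would verify the hypotheses in the following order.
\begin{enumerate}[label=(\alph*)]
    \item Since $\X$ is proper, hence Polish, $(\P(\X),\tau_w)$ is separable and metrizable (Prokhorov, or the Lévy--Prokhorov metric).
    \item Weak measurability of $\psi$ is exactly \cref{lem:u cont-weak measurable correspondence}; this is where \cref{ass:T function assumption} enters.
    \item The values $\psi(x)=\P(\clball{\eps}{x})$ are nonempty, since they contain $\delta_x$. They are compact because $\clball{\eps}{x}$ is compact by properness, so $\P(\clball{\eps}{x})$ is tight and, by Prokhorov, relatively compact. It is also closed in $\P(\X)$ by the portmanteau theorem: $m_n(\clball{\eps}{x})=1$ gives $\limsup_n m_n(F)\le m(F)$ for the closed set $F=\clball{\eps}{x}$, hence $m(\clball{\eps}{x})=1$.
    \item $f$ does not depend on $x$, so it is trivially measurable in $x$.
\end{enumerate}

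The main obstacle is the continuity of $m\mapsto\int u\dd m$, because $u\in C(\X)$ need not be bounded and weak convergence only tests against $C_b(\X)$. I would handle this by restricting attention to the relevant sets. On $\P(\clball{\eps}{x})$ one can replace $u$ by a bounded continuous function that agrees with $u$ on $\clball{\eps}{x}$, for instance $u$ truncated at $\pm\max_{\clball{\eps}{x}}\abs{u}$; this gives continuity on each value $\psi(x)$. To obtain a globally Carathéodory function, I would instead apply the theorem on each piece of a countable cover. Take $\X_k\coloneq\ball{k}{x_0}$ and note that for $x\in\X_k$ only measures supported in the compact set $\clball{k+\eps}{x_0}$ matter. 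There I replace $u$ by the truncation $u_k$ at level $\max_{\clball{k+\eps}{x_0}}\abs{u}$, which lies in $C_b(\X)$, so that $f_k(m)=\int u_k\dd m$ is continuous. The resulting $\mathfrak m$ and selectors are measurable on each Borel set $\X_k\setminus\X_{k-1}$, and gluing them gives the claims on $\X$.

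Nonemptiness of $A^x$ also follows directly from \cref{prop:u cont max dualisation}, since $\delta_{y_x}$ is a maximizer. Compactness of $A^x$ follows because $A^x$ is a closed subset (preimage of a point under a continuous map) of the compact set $\psi(x)$.
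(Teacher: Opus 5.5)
Your proposal is correct and follows the paper's argument: weak measurability of $\psi$ from \cref{lem:u cont-weak measurable correspondence}, nonempty compact values $\P(\clball{\eps}{x})$ via Prokhorov, Polishness of $(\P(\X),\tau_w)$, and then the classical measurable maximum theorem \cite[Theorem 18.19]{aliprantis2006}. The one difference is the Carathéodory hypothesis. The paper only asserts it for $f(x,m)=\int_{\clball{\eps}{x}}u\dd m$ on $\Gr\psi$; viewed as a function on $\X\times\P(\X)$, this $f$ is not continuous in $m$, since the indicator of a closed ball is only upper semicontinuous and $u$ may be unbounded. Your localization to $\ball{k}{x_0}\setminus\ball{k-1}{x_0}$ with bounded truncations $u_k$ gives a genuine Carathéodory objective and so makes that step rigorous.
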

\begin{proof}
As mentioned before, ${\clball{\eps}{x}}\subset\X$ is compact and thus due to Prokhohov's theorem, $\P({\clball{\eps}{x}})$ is weakly sequentially compact. Hence, the correspondence $\psi$ as defined in \eqref{eq:correspondence to measures} has nonempty and compact values and we aim to apply the classical measurable maximum theorem, see \cite[Theorem 18.19]{aliprantis2006}. To this end we define $f\colon \Gr\psi\to\IR, f(x,m)=\int_{{\clball{\eps}{x}}} u(y)\dd m(y)$ and by the fact that $\Ie_{{\clball{\eps}{x}}}(y)$ is jointly measurable we obtain that $f$ is a Carathéodory function. Lastly, since $\X$ is Polish and locally compact, the space $(\P(\X),\tau_w)$ is Polish. So, the measurable maximum theorem is indeed applicable and completes the proof. 
\end{proof}
Using the regularity of a measurable selector, we interpret the family of measures provided by the selector function as a random walk, also known as Markov kernels in the literature. More precisely, for every $x\in\X$ we obtain a probability measure supported in the $\eps$-ball around $x$, describing the probability distribution of a single step of the walk. We introduce random walks following \cite{mazon2020}.
\begin{definition}[Random walk]\label{def:random walk}
    Let $(\X,\dd)$ be a Polish metric space equipped with its Borel $\sigma$-algebra $\BB_\X$. A \emph{random walk} $\mathbf{m}$ on $\X$ is a family of probability measures $m_x$ on $\X$ for $x\in \X$ satisfying
    \begin{enumerate}[label=(\roman*)]
        \item[(i)] the measures $m_x$ depend measurably on the point $x\in \X$, i.e., for any Borel set $A$ of $\X$ and any Borel set $B$ of $\IR$, the set $\set{x\in\X}{m_x(A)\in B}$ is Borel;
        \item[(ii)] each measure $m_x$ has finite first moment, i.e., for some $x_0\in\X$ and for any $x\in \X$ one has $\int_\X\dd (x_0,y)\dd m_x(y)<+\infty$.
    \end{enumerate}
    We define the set of random walks on $\X$ as ${RW}(\X)$. 
\end{definition}
Next, we show that random walks induce measurable parameter integrals. 
This is particularly useful since it allows integration without explicitly invoking the Fubini--Tonelli theorem.
\begin{lemma}\label{lem:random walks measurable}
    Let $\X$ be a Polish metric space and $u\colon \X\to\IR$ a measurable function. Then, for $\mathbf{m}\in {RW}(\X)$ the mapping
    \begin{align}
        I\colon \X\to\IR,\quad x\mapsto \int_\X u\dd m_x
    \end{align}
    is measurable.
\end{lemma}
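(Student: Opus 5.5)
The plan is the standard monotone class / simple function approximation argument, with the measurability condition (i) in \cref{def:random walk} as the starting point.

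\emph{Indicators and simple functions.} For $u=\Ie_A$ with $A\in\BB_\X$ we have $I(x)=m_x(A)$. Condition (i) of \cref{def:random walk} says exactly that $\set{x}{m_x(A)\in B}$ is Borel for every Borel $B\subset\IR$, so $I$ is measurable. By linearity of the integral, $I$ is then measurable for every simple function $u=\sum_{k=1}^n a_k\Ie_{A_k}$, since finite linear combinations of measurable real functions are measurable.

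\emph{Nonnegative measurable $u$.} Choose simple functions $0\le s_n\uparrow u$ pointwise. For each fixed $x$, the monotone convergence theorem applied to the probability measure $m_x$ gives $\int_\X s_n\dd m_x\uparrow\int_\X u\dd m_x$, possibly equal to $+\infty$. So $I$ is a pointwise monotone limit of measurable functions and is therefore measurable as an extended-real-valued map.

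\emph{General $u$.} Write $u=u^+-u^-$. Then $I(x)=\int u^+\dd m_x-\int u^-\dd m_x$ wherever this difference is defined, and a difference of measurable functions is measurable on that set. So $I$ is measurable on the Borel set $\set{x}{\int\abs{u}\dd m_x<\infty}$; this set is Borel by the previous step applied to $\abs{u}$. In the intended applications $u$ is continuous and bounded on the relevant balls, or $u\in C_0(\X)$, so $I$ is finite and real-valued. The statement should be read with the implicit assumption that the integrals exist, for instance $u$ bounded or $u\ge0$.

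\emph{Main obstacle.} No step here is deep. The one point needing care is exactly this integrability issue for general measurable $u$: making sure $\int_\X u\dd m_x$ is well defined. I would handle it as above, by restricting to the Borel set where $\int\abs{u}\dd m_x<\infty$, or by assuming boundedness of $u$.
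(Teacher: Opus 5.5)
Your proof is correct and follows the paper's argument essentially verbatim. The shared steps are: condition (i) of the random walk definition for indicators, linearity for simple functions, and monotone simple approximations of $u^+$ and $u^-$ combined with the monotone convergence theorem. Your extra remark that $I$ is only guaranteed to be well defined and real-valued on the Borel set where $\int_\X \abs{u}\dd m_x<\infty$ (automatically all of $\X$ for bounded $u$) is a legitimate refinement of a point the paper passes over silently.
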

\begin{proof}
    First, we show the statement for $u$ being a simple function. By definition there is a family of Borel sets $(A_i)\subset \X$ and $\alpha_i\in\IR\setminus\{0\}$ ($i\in\{1,\ldots,n\}$) such that $u=\sum_{i=1}^n \alpha_i \Ie_{A_i}$. Define
    \begin{align}
        I_i\colon \X\to\IR,\quad  x\mapsto \int_\X\alpha_i\Ie_{A_i}\dd m_x=\alpha_i m_x(A_i)
    \end{align}
    which is a measurable mapping since the righthand side is measurable due to the definition of a random walk. Linear combinations of measurable functions still being measurable yields that
    \begin{align}
        I(x)=\int_\X u\dd m_x=\int_\X  \sum_{i=1}^n \alpha_i \Ie_{A_i} \dd m_x=\sum_{i=1}^n I_i(x)
    \end{align}
    is measurable.
    
    Now assume that $u\colon \X\to\IR$ is a general measurable function. We can split $u=u^+-u^-$ into the difference of two non-negative measurable functions. For each of them there is a sequence of simple functions $(u_n^+)_{n\in\IN}$ and $(u_n^-)_{n\in\IN}$ such that $u_n^+(y)\uparrow u^+(y)$ and $u_n^-(y)\uparrow u^-(y)$ for all $y\in\X$, see \cite[Theorem 4.36]{aliprantis2006}. This implies that
    \begin{align}
        I(x)&=\int_\X u(y) \dd m_x(y)=\int_\X u^+(y)\dd m_x(y)-\int_\X u^-(y)\dd m_x(y)\\
        &=\int_\X \lim_{n\to\infty} u_n^+(y)\dd m_x(y)-\int_\X \lim_{n\to\infty}u_n^-(y)\dd m_x(y)\\
        &=\lim_{n\to\infty}\int_\X u_n^+(y)\dd m_x(y)-\lim_{n\to\infty}\int_\X u_n^-(y)\dd m_x(y)
    \end{align}
    where we used the monotone convergence theorem for the last equation. Due to the first part of the proof and the fact that the pointwise limit of measurable functions is itself measurable we obtain that $I$ is the difference of two measurable functions and thus a measurable function itself.
\end{proof}
Next, we prove that instead of integrating over each maximum separately, one may choose a maximizing random walk, thereby obtaining a set of test functions independent of $x$.
\begin{lemma}\label{lem:u cont-dual representation}
    Let $\X$ be a proper metric space satisfying \cref{ass:T function assumption} and define
    \begin{align}
        \mathfrak{M}\coloneq \set{\mathbf{m}\in{RW}(\X)}{\supp m_x\subset{\clball{\eps}{x}}\text{ for }\vrho\text{-almost every }x\in\X}.
    \end{align}
    Then for every $u\in C(\X)$ and $i\in\{0,1\}$ we have
    \begin{align}
        \int_\X \max_{m\in\P({\clball{\eps}{x}})} \int_{{\clball{\eps}{x}}} u(y)\dd m(y) \dd \vrho_i(x)=\max_{\mathbf{m}\in\mathfrak{M}}\int_\X\int_\X u(y)\dd m_x(y)\dd\vrho_i(x).
    \end{align}
\end{lemma}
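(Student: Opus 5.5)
The plan is to prove two inequalities, using the measurable selector from \cref{prop:u cont-measurable selector} for the harder direction. Write $\mathfrak{m}(x)$ for the inner maximum as in \eqref{eq:u cont-max problem}; by \cref{prop:u cont-measurable selector}(i) it is measurable, so the left-hand side makes sense. By \cref{prop:u cont max dualisation} it equals $\max_{\clball{\eps}{x}}u$.

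\emph{Upper bound for arbitrary random walks.} Let $\mathbf{m}\in\MM$. By \cref{lem:random walks measurable}, $x\mapsto\int_\X u\dd m_x$ is measurable, so the double integral is well defined. For $\vrho$-a.e.\ $x$ we have $\supp m_x\subset\clball{\eps}{x}$, hence the restriction of $m_x$ lies in $\P(\clball{\eps}{x})$. Therefore $\int_\X u\dd m_x\le\mathfrak{m}(x)$ for $\vrho_i$-a.e.\ $x$, and integrating against $\vrho_i$ gives the inequality ``$\ge$'' between the left-hand side and the supremum over $\MM$.

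\emph{Attainment.} Take the selector $\Psi$ from \cref{prop:u cont-measurable selector}(iii) and set $m_x\coloneq\Psi(x)$. Then $\int u\dd m_x=\mathfrak{m}(x)$ for every $x$, so integrating yields equality and shows that the supremum is a maximum. It remains to check that $\mathbf{m}\in\MM$.
\begin{itemize}
\item The support condition holds for every $x$, since $\Psi(x)\in A^x\subset\P(\clball{\eps}{x})$.
\item The first moment is finite, because $m_x$ is supported in a bounded ball: $\int\dd(x_0,y)\dd m_x(y)\le \dd(x_0,x)+\eps$.
\end{itemize}

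\emph{Main obstacle: measurability of the kernel.} We need that $x\mapsto m_x(A)$ is Borel for every Borel set $A$, whereas $\Psi$ is only known to be Borel as a map into $(\P(\X),\tau_w)$. The plan is to show that $\mu\mapsto\mu(A)$ is Borel on $(\P(\X),\tau_w)$, in three steps.
\begin{enumerate}
\item For open $G$, the map $\mu\mapsto\mu(G)$ is lower semicontinuous in $\tau_w$ by the portmanteau theorem, hence Borel.
\item The class $\mathcal{D}$ of Borel sets $A$ for which $\mu\mapsto\mu(A)$ is Borel is a Dynkin system: it is closed under complements since $\mu(A^c)=1-\mu(A)$, and under countable disjoint unions by taking pointwise limits of finite sums.
\item Open sets form a $\pi$-system contained in $\mathcal{D}$, so Dynkin's $\pi$-$\lambda$ theorem gives $\mathcal{D}=\BB_\X$.
\end{enumerate}
Composing with $\Psi$ then shows that $\{x\st m_x(A)\in B\}$ is Borel for all Borel $B\subset\IR$, so $\mathbf{m}\in RW(\X)$, and hence $\mathbf{m}\in\MM$. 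This completes the argument.
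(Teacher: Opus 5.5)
Your proposal is correct and follows the paper's proof. It uses the same two inequalities: the upper bound comes from restricting $m_x$ to $\clball{\eps}{x}$, and attainment comes from the selector $\Psi$ of \cref{prop:u cont-measurable selector}. The paper only asserts that $x\mapsto\Psi(x)(A)$ is Borel and that the first moments are finite; you prove both, the first via lower semicontinuity of $\mu\mapsto\mu(G)$ for open $G$ together with the $\pi$--$\lambda$ theorem, and the second via the bound $\dd(x_0,x)+\eps$, which fills that gap correctly.
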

\begin{proof}
    For the first inequality let $\mathbf{m}\in \MM$. Then, by definition for $\vrho$-a.e. $x\in\X$ the measure $m_x\restr {{\clball{\eps}{x}}}\in\P({\clball{\eps}{x}})$ and thus
    \begin{align}
        \max_{m\in\P({\clball{\eps}{x}})} \int_{{\clball{\eps}{x}}} u(y)\dd m(y)\geq \int_{{\clball{\eps}{x}}} u(y) \dd (m_x\restr {{\clball{\eps}{x}}})(y)=\int_\X u(y)\dd m_x(y).
    \end{align}
    By \cref{prop:u cont-measurable selector}~(i) the lefthand side is measurable on $\X$ and by \cref{lem:random walks measurable} the same is true for the righthand side. Hence, integration over $\X$ yields
    \begin{align}
            \int_\X \max_{m\in\P({\clball{\eps}{x}})} \int_{{\clball{\eps}{x}}} u(y)\dd m(y) \dd \vrho_i(x)\geq \int_\X\int_\X u(y)\dd m_x(y)\dd\vrho_i(x).
    \end{align}
    Lastly, since $\mathbf{m}\in\MM$ was arbitrarily chosen we obtain
    \begin{align}
        \int_\X \max_{m\in\P({\clball{\eps}{x}})} \int_{{\clball{\eps}{x}}} u(y)\dd m(y) \dd \vrho_i(x)\geq \max_{\mathbf{m}\in\mathfrak{M}}\int_\X\int_\X u(y)\dd m_x(y)\dd\vrho_i(x).
    \end{align}
    For the converse inequality we define a random walk by using the selector $\Psi\colon \X\to\P(\X)$ which we obtain from \cref{prop:u cont-measurable selector}. Define $\hat{\mathbf{m}}=(\hat{m}_x)$ where $\hat{m}_x$ is the extension of $\Psi(x)$ on $\X$ by zero for every $x\in\X$. Then, this family of probability measures on $\X$ is measurably dependent on $x\in\X$ due to the measurability of $\Psi$. Furthermore, it has also finite first moments which means that $\hat{\mathbf{m}}\in RW(\X)$. 
    Due to the construction, the support condition is also satisfied meaning that $\hat{\mathbf{m}}\in\MM$. Apart from that, the properties of the selector $\Psi$ imply that
    \begin{align}
        \int_\X u(y)\dd\hat{m}_x(y)=\max_{m\in\P({\clball{\eps}{x}})}\int_{{\clball{\eps}{x}}} u \dd m.
    \end{align} 
    By \cref{lem:random walks measurable} the lefthand side is measurable, whereas the measurability of the righthand side is given by \cref{prop:u cont-measurable selector}. 
    So, we can once again integrate over $\X$ to obtain
    \begin{align}
        \max_{\mathbf{m}\in\mathfrak{M}}&\int_\X\int_\X u(y)\dd m_x(y)\dd\vrho_i(x)\\&\geq \int_\X\int_\X u(y)\dd \hat{m}_x(y)\dd \vrho_i(x)=\int_\X \max_{m\in\P({\clball{\eps}{x}})} \int_{{\clball{\eps}{x}}} u(y)\dd m(y) \dd \vrho_i(x)
    \end{align}
    which finishes the proof.
\end{proof}
By symmetry of the nonlocal total variation functional, the same arguments apply to the second part of the sum, which completes the proof of the dual reformulation.

\subsection{Nonlocal Gradient and Divergence}\label{u cont-nonlocal gradient}
Based on the dual reformulation of each part of the nonlocal total variation, we rearrange the terms to obtain suitable notions of nonlocal gradient and nonlocal divergence operators connected by an integration-by-parts identity. 

Before stating our main result, we define its essential ingredients; the nonlocal gradient and divergence.
\begin{definition}[Nonlocal gradient]\label{def:nonlocal_gradient}
    For a function $u\colon\X\to\IR$ and $\eps>0$ we define its nonlocal gradient $\grad_\eps[u]\colon\X\times\X\to\IR$ via
    \begin{align}
        \grad_\eps[u](x,y) \coloneq \frac{u(y)-u(x)}{\eps},\qquad x,y\in\X.
    \end{align}
\end{definition}
Note that, in particular, this definition induces a linear operator $\grad_\eps\colon C(\X)\to C(\X\times\X)$.
Next, we define a nonlocal divergence which acts like some sort of adjoint operator of the nonlocal gradient. 
Note that the adjoint would map signed measures on $\X\times\X$ to a signed measure on $\X$. 
Here, however, taking the structure of our problem into account, we define the nonlocal divergence on random walks, mapping to the space of finite signed Radon measures on $\X$, denotes by $\M_f^\pm(\X)$.
A signed measure $\mu$ is called a \emph{finite signed (Radon) measure} if both its positive and negative variations $\mu^+$ and $\mu^-$, given by the Jordan decomposition, are finite (Radon) measures. 
\begin{definition}[Nonlocal weighted divergence]\label{def:nonlocal_divergence_random_walk}
    For $i\in\{0,1\}$ the $i$-the nonlocal weighted divergence is defined as the operator $\div_\eps^{\vrho_i} \colon RW(\X) \to \M_f^\pm(\X)$, given by
    \begin{align}
        \div_\eps^{\vrho_i}[\mathbf{m}](A) \coloneq \frac{\int_A m_x(\X)\d\vrho_i(x) - \int_\X m_x(A)\d\vrho_i(x)}{\eps},
        \qquad A\in\BB_\X.
    \end{align}
    Furthermore, for $\Frho\coloneq(\vrho_0,\vrho_1)$ we define $\div_\eps^\Frho\colon RW(\X)\times RW(\X)\to\M_f^\pm(\X)$ via
    \begin{align}
        \div_\eps^{\Frho}[\underline{\mathbf{m}}](A) \coloneq \div^{\vrho_0}_\eps[\mathbf{m}^0](A)-\div^{\vrho_1}_\eps[\mathbf{m}^1](A),
        \qquad 
        \underline{\mathbf{m}}\in RW(\X),\;
        A\in\BB_\X.
    \end{align}
\end{definition}
Finally, we introduce suitable dominating measure which allows us to simplify certain integral expressions by applying the Radon--Nikodym theorem.
\begin{definition}[Dominating measures]\label{def:dominating_measure}
    Given $\underline{\mathbf{m}}\in RW(\X)\times RW(\X)$ and $\Frho=(\rho_0,\rho_1)$ we define the random walk $\mathbf{n}\in RW(\X)$ and the probability measure $\rho\in\P(\X)$ via
    \begin{align}
        n_x &\coloneq \frac{m_x^0 + m_x^1}{2},\qquad x\in\X,\\
        \rho &\coloneq \rho_0+\rho_1,
    \end{align}
    which satisfy $m_x^0,m_x^1\ll n_x$ for all $x\in\X$ and $\rho_0,\rho_1\ll\rho$.
    Furthermore, we define the measurable function $[\underline{\mathbf{m}};\Frho]\colon\X\times\X\to\IR$ via
    \begin{align}
        [\underline{\mathbf{m}};\Frho](x,y)\coloneq\frac{\dd\vrho_0}{\dd\rho}(x)\frac{\dd m^0_x}{\dd n_x}(y)-\frac{\dd\vrho_1}{\dd\rho}(x)\frac{\dd m_x^1}{\dd n_x}(y),
        \qquad
        x,y\in\X.
    \end{align}
\end{definition}
\begin{theorem}\label{thm:u cont-dual representation}
    Let $\X$ be a proper metric space satisfying \cref{ass:T function assumption}. Furthermore, let $\mu\in \P(\X\times\{0,1\})$ be a probability measure and $\vrho_i\coloneq \mu(\filll\times\{i\})$ for $i\in\{0,1\}$ the respective conditional distributions.
    For $u\in C(\X)$ it holds
    \begin{align}
        \tv_\eps(u)
        &=
        \max_{\underline{\mathbf{m}}\in\MM\times\MM}\int_\X\int_\X \grad_\eps[u](x,y) \dd m^0_x(y)\dd \vrho_0(x)
        +\int_\X\int_\X -\grad_\eps[u](x,y)\dd m_x^1(y)\dd \vrho_1(x),
        \\
        &=
        \max_{\underline{\mathbf{m}}\in\mathfrak{M}\times\mathfrak{M}}\int_\X\int_\X \grad_\eps[u](x,y) [\underline{\mathbf{m}};\Frho](x,y)\dd n_x(y)\dd \rho(x).
    \end{align}
    For $u\in C_b(\X)$ it even holds
    \begin{align}
        \tv_\eps(u)
        =\max_{\underline{\mathbf{m}}\in\MM\times\MM}-\int_\X u(x)\dd \div_\eps^{\Frho}[\underline{\mathbf{m}}](x).
    \end{align}
\end{theorem}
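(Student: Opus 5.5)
The plan is to reduce everything to \cref{thm:u cont-dual reformulation of tv}. The optimization over $\underline{\mathbf m}=(\mathbf m^0,\mathbf m^1)$ decouples, so the maximum of the sum is the sum of the separate maxima. For the first formula we use that every $\mathbf m\in\MM$ consists of probability measures supported in $\clball{\eps}{x}$ for $\vrho$-a.e.\ $x$. This gives
\begin{align}
\int_\X\int_\X \grad_\eps[u](x,y)\dd m^0_x(y)\dd\vrho_0(x)=\frac1\eps\left(\int_\X\int_\X u(y)\dd m^0_x(y)\dd\vrho_0(x)-\int_\X u\dd\vrho_0\right),
\end{align}
and similarly for the $\vrho_1$-term with the opposite sign. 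Integrability is not an issue: for $u\in C(\X)$ we have $\abs{u}\le\max_{\clball{\eps}{x}}\abs{u}$ on the support of $m_x$, and this bound is finite. We assume (as the statement implicitly requires for $\tv_\eps(u)$ to be finite) that $\int u\dd\vrho_i$ and the max/min integrals are finite. For instance, $u\in C_b$ suffices; otherwise both sides are $+\infty$ simultaneously, and we treat this case separately. Measurability of the inner integrals comes from \cref{lem:random walks measurable}. Taking the maximum over $\mathbf m^0$ and the minimum over $\mathbf m^1$ separately, \cref{thm:u cont-dual reformulation of tv} then yields exactly the defining expression of $\tv_\eps(u)$, with the maxima/minima over closed balls as in the definition of $\tv_\eps$ in this section. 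The maximum is attained at the pair of selectors constructed in \cref{lem:u cont-dual representation}.

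For the second formula, we rewrite each term via \cref{def:dominating_measure}. Since $m^i_x\ll n_x$ and $\vrho_i\ll\rho$, we have
\begin{align}
\int_\X\int_\X g(x,y)\dd m^i_x(y)\dd\vrho_i(x)=\int_\X\int_\X g(x,y)\frac{\dd\vrho_i}{\dd\rho}(x)\frac{\dd m^i_x}{\dd n_x}(y)\dd n_x(y)\dd\rho(x)
\end{align}
for $g=\pm\grad_\eps[u]$. Subtracting the $i=1$ term and using linearity gives the pairing with $[\underline{\mathbf m};\Frho]$. The one subtle point is that $(x,y)\mapsto \frac{\dd m^i_x}{\dd n_x}(y)$ must be jointly measurable so that $[\underline{\mathbf m};\Frho]$ is measurable and the iterated integrals make sense. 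I would handle this by invoking the standard result on measurable Radon--Nikodym densities for kernels on Polish (countably generated) spaces, e.g.\ via Doob's martingale construction on a countable generating partition. I expect this to be the main technical obstacle, although it is standard.

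For the divergence form with $u\in C_b(\X)$, we integrate $u$ against the signed measure $\div^{\vrho_i}_\eps[\mathbf m]$. By definition and the fact that $m_x(\X)=1$, this measure equals $\frac1\eps(\vrho_i-\int_\X m_x\dd\vrho_i(x))$, where the second measure is the mixture $A\mapsto\int m_x(A)\dd\vrho_i(x)$. It is a finite measure, since the $\vrho_i$ are finite and $\int m_x(A)\dd\vrho_i$ is countably additive by monotone convergence. For bounded measurable $u$ we then show
\begin{align}
\int_\X u\dd\Big(\int_\X m_x\dd\vrho_i(x)\Big)=\int_\X\int_\X u\dd m_x\dd\vrho_i(x)
\end{align}
by checking it on indicators, extending to simple functions, and passing to the limit via monotone convergence, exactly as in the proof of \cref{lem:random walks measurable}. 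Hence
\begin{align}
-\int_\X u\dd\div^{\vrho_0}_\eps[\mathbf m^0]=\int\int\grad_\eps[u]\dd m^0_x\dd\vrho_0,
\end{align}
and the analogous identity with the sign flipped holds for $\vrho_1$. Combining the two with $\div^\Frho_\eps=\div^{\vrho_0}_\eps-\div^{\vrho_1}_\eps$ reproduces the first formula, and boundedness of $u$ guarantees that all integrals are finite.
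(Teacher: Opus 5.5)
Your proposal follows the paper's proof. The first two identities are obtained as in \cref{prop:u cont-nonlocal gradient}: decouple, use $m_x(\X)=1$ and \cref{thm:u cont-dual reformulation of tv}, then rewrite with Radon--Nikodym derivatives. The divergence form is obtained as in \cref{prop:u cont-nonlocal divergence}: indicators, then simple functions, then a limit passage for bounded $u$.

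One claim needs correcting. It is not true that $\int_\X u\d\vrho_i\notin\IR$ forces both sides to be $+\infty$. Take $\X=\IR$, $u(x)=\abs{x}$ and $\vrho_0$ with density proportional to $(1+x^2)^{-1}$. Then $\int u\d\vrho_0=\infty$, yet $\tv_\eps(u)\le 1$ because $\abs{u(y)-u(x)}\le\eps$ on $\clball{\eps}{x}$. So subtracting $\int u\d\vrho_0$ produces $\infty-\infty$ (the paper's proof takes the same step). The fix is never to split off $\int u\d\vrho_i$. For every $\mathbf m\in\MM$ and $\vrho$-a.e.\ $x$ one has $\int_\X\grad_\eps[u](x,y)\d m_x(y)\le\frac1\eps\big(\max_{\clball{\eps}{x}}u-u(x)\big)$, with equality for the selector from \cref{prop:u cont-measurable selector}. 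Integrating this pointwise statement against $\vrho_0$ gives the identity for every $u\in C(\X)$, and analogously for $\vrho_1$.

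You flag the joint measurability of $(x,y)\mapsto\frac{\d m^i_x}{\d n_x}(y)$ as the main obstacle, but it is not needed for the second identity. For any choice of versions, the inner $n_x$-integral equals $\frac{\d\vrho_0}{\d\rho}(x)\int_\X\grad_\eps[u](x,y)\d m^0_x(y)-\frac{\d\vrho_1}{\d\rho}(x)\int_\X\grad_\eps[u](x,y)\d m^1_x(y)$. This is measurable in $x$ by \cref{lem:random walks measurable}. A jointly measurable version matters only for the assertion in \cref{def:dominating_measure} that $[\underline{\mathbf{m}};\Frho]$ is a measurable function on $\X\times\X$.
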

A straightforward rearrangement and the Radon--Nikodym theorem lead to the first two equalities in \cref{thm:u cont-dual representation}.
\begin{proposition}[Nonlocal gradient]\label{prop:u cont-nonlocal gradient}
    Under the conditions of \cref{thm:u cont-dual representation} for all $u\in C(\X)$ we have
    \begin{align}
        \tv_\eps(u)
        &=
        \max_{\underline{\mathbf{m}}\in\MM\times\MM}\int_\X\int_\X \grad_\eps[u](x,y) \dd m^0_x(y)\dd \vrho_0(x)
        +
        \int_\X\int_\X -\grad_\eps[u](x,y)\dd m_x^1(y)\dd \vrho_1(x)
        \\
        &=
        \max_{\underline{\mathbf{m}}\in\mathfrak{M}\times\mathfrak{M}}\int_\X\int_\X \grad_\eps[u](x,y) [\underline{\mathbf{m}};\Frho](x,y)\dd n_x(y)\dd \rho(x)
    \end{align}
    where the nonlocal gradient $\grad_\eps$ is defined in \cref{def:nonlocal_gradient}, and the dominating measures $n_x$ and $\rho$ are defined in \cref{def:dominating_measure}.
\end{proposition}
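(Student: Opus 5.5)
The plan is to start from the definition of $\tv_\eps$ in the continuous setting,
\begin{align}
\tv_\eps(u)=\int_\X \frac{\max_{\clball{\eps}{x}}u-u(x)}{\eps}\dd\vrho_0(x)+\int_\X\frac{u(x)-\min_{\clball{\eps}{x}}u}{\eps}\dd\vrho_1(x),
\end{align}
and to insert \cref{thm:u cont-dual reformulation of tv} into both terms. Since $m_x$ is a probability measure, $u(x)=\int_\X u(x)\dd m_x(y)$. Hence for any $\mathbf{m}\in\MM$,
\begin{align}
\int_\X\int_\X u(y)\dd m_x(y)\dd\vrho_0(x)-\int_\X u(x)\dd\vrho_0(x)=\eps\int_\X\int_\X\grad_\eps[u](x,y)\dd m_x(y)\dd\vrho_0(x),
\end{align}
and the analogous identity holds for the $\vrho_1$ term with a minus sign. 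The two maximizations, over $\mathbf{m}^0$ and over $\mathbf{m}^1$, are independent. Writing the minimum as the negative of a maximum of $-\int u\dd m^1_x$, the sum of the two maxima equals the maximum over the pair $\underline{\mathbf{m}}\in\MM\times\MM$. Both maxima are attained by \cref{lem:u cont-dual representation}, so this gives the first equality.

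\textbf{Integrability.} Some care is needed to ensure that splitting the integrals is legitimate, since $u\in C(\X)$ may be unbounded. On $\clball{\eps}{x}$ we have $|u(y)-u(x)|\le \max_{\clball{\eps}{x}}u-\min_{\clball{\eps}{x}}u$, so $\grad_\eps[u](x,\cdot)$ is bounded on the support of $m_x$ for $\rho$-a.e.\ $x$. Its $m_x$-integral is therefore measurable by \cref{lem:random walks measurable}, and it is dominated by the (measurable, by \cref{prop:u cont-measurable selector}) oscillation of $u$ over the ball. I would combine each $\vrho_i$-integral into a single integral of $\grad_\eps[u]$ before integrating, rather than splitting into $\int\int u(y)$ minus $\int u(x)$. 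This avoids $\infty-\infty$ issues; if $\tv_\eps(u)=\infty$, both sides are $+\infty$ consistently, because all integrands are nonnegative in the sign-corrected form.

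\textbf{Second equality.} This is a Radon--Nikodym rewriting with the dominating measures of \cref{def:dominating_measure}. Since $\vrho_i\ll\rho$ and $m^i_x\ll n_x$, for each $x$ we have $\dd m^i_x=\frac{\dd m^i_x}{\dd n_x}\dd n_x$ and $\dd\vrho_i=\frac{\dd\vrho_i}{\dd\rho}\dd\rho$. Substituting these and collecting terms yields exactly $\int\int\grad_\eps[u]\,[\underline{\mathbf{m}};\Frho]\dd n_x\dd\rho$. The densities of $\vrho_0$ and $\vrho_1$ with respect to $\rho$ lie in $[0,1]$ and the densities of $m^i_x$ with respect to $n_x$ lie in $[0,2]$, so no new integrability issues arise.

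\textbf{Main obstacle.} The delicate point is the joint measurability of $(x,y)\mapsto\frac{\dd m^i_x}{\dd n_x}(y)$, which is needed for $[\underline{\mathbf{m}};\Frho]$ to be measurable, as asserted in \cref{def:dominating_measure}, and for $x\mapsto\int \grad_\eps[u](x,y)[\underline{\mathbf{m}};\Frho](x,y)\dd n_x(y)$ to be measurable. I would first try to sidestep this. For fixed $x$, the inner integral $\int\grad_\eps[u](x,y)\frac{\dd m^i_x}{\dd n_x}(y)\dd n_x(y)$ equals $\int\grad_\eps[u](x,y)\dd m^i_x(y)$ pointwise in $x$, and the latter is measurable. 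So the $x$-integrands coincide, and the identity holds without needing joint measurability beyond what the definition asserts. If joint measurability is needed anyway, I would invoke the standard measurable Radon--Nikodym theorem for kernels on a countably generated $\sigma$-algebra, which applies here since $\X$ is Polish. This is done via martingale convergence along a countable generating filtration.
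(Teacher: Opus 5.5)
Your proposal is correct and follows essentially the same route as the paper. Like the paper, you insert \cref{thm:u cont-dual reformulation of tv}, absorb $u(x)$ into the inner integral via $m_x(\X)=1$, add the two independent maxima, and then rewrite using the Radon--Nikodym densities $\frac{\dd m^i_x}{\dd n_x}$ and $\frac{\dd\vrho_i}{\dd\rho}$. Your extra remarks on possibly unbounded $u$, and on avoiding joint measurability of $[\underline{\mathbf{m}};\Frho]$, address points the paper passes over silently. For the latter, the inner $n_x$-integral already equals $\frac{\dd\vrho_0}{\dd\rho}(x)\int\grad_\eps[u](x,y)\dd m^0_x(y)-\frac{\dd\vrho_1}{\dd\rho}(x)\int\grad_\eps[u](x,y)\dd m^1_x(y)$, which is measurable in $x$.

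One small correction: for a non-maximizing $\mathbf{m}^0\in\MM$ the integrand $x\mapsto\int\grad_\eps[u](x,y)\dd m^0_x(y)$ can be negative. Nonnegativity holds only for the maximizing walk obtained from the selector, and that is all the case $\tv_\eps(u)=\infty$ actually requires.
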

\begin{proof}
    Due to \cref{thm:u cont-dual reformulation of tv} we have
    \begin{align}
        \frac{1}{\eps}\int_\X\max_{{\clball{\eps}{x}}}u-u(x)\dd \vrho_0(x)
        &=
        \frac{1}{\eps}\left[\max_{\mathbf{m}^0\in\mathfrak{M}}\int_\X\int_\X u(y) \dd m_x^0(y)\dd\vrho_0(x)-\int_\X u(x)\dd \vrho_0(x)\right]\\
        &=\max_{\mathbf{m}^0\in\mathfrak{M}}\frac{1}{\eps}\left[\int_\X
        \left(\int_\X u(y)\dd m^0_x(y)-\underbrace{\int_\X\dd m^0_x(y)}_{=1} u(x)\right)\dd\vrho_0(x)\right]\\
        &=\max_{\mathbf{m}^0\in\mathfrak{M}}\int_\X\int_\X \frac{u(y)- u(x)}{\eps}\dd m^0_x(y)\dd\vrho_0(x)\\
        &=\max_{\mathbf{m}^0\in\mathfrak{M}}\int_\X\int_\X \grad_\eps[u](x,y)\dd m^0_x(y)\dd\vrho_0(x)
    \end{align}
    and similarly
    \begin{align}
        \frac{1}{\eps}\int_\X u(x)&-\min_{{\clball{\eps}{x}}}u\dd \vrho_1(x)=\max_{\mathbf{m}^1\in\mathfrak{M}}\int_\X\int_\X-\grad_\eps[u](x,y)\dd m_x^1(y)\dd \vrho_1(x).
    \end{align}
    Summing these two expressions proves the first equality.
    For the second one, we use Radon--Nikodym derivatives to compute
    \begin{align}
        \int_\X\int_\X \grad_\eps[u](x,y)\dd m^0_x(y)\dd\vrho_0(x)
        &=
        \int_\X\int_\X \grad_\eps[u](x,y)
        \frac{\dd m^0_x}{\dd n_x}(y)
        \dd n_x(y) \frac{\dd \rho_0}{\dd\rho}(x)\dd\rho(x)
        \\
        &=
        \int_\X\int_\X \grad_\eps[u](x,y)
        \frac{\dd m^0_x}{\dd n_x}(y)
        \frac{\dd \rho_0}{\dd\rho}(x)
        \dd n_x(y) 
        \dd\rho(x)
    \end{align}
    and similarly
    \begin{align}
        \int_\X\int_\X-\grad_\eps[u](x,y)\dd m_x^1(y)\dd \vrho_1(x)
        =
        -\int_\X\int_\X \grad_\eps[u](x,y)
        \frac{\dd m^1_x}{\dd n_x}(y)
        \frac{\dd \rho_1}{\dd\rho}(x)
        \dd n_x(y) 
        \dd\rho(x).
    \end{align}
    Summing both expressions, taking the maximum over $\mathbf{m}^0,\mathbf{m}^1\in\MM$, and using the definition of $[\underline{\mathbf{m}};\Frho]$ from \cref{def:dominating_measure} shows to the second equality.    
\end{proof}

\begin{proposition}[Nonlocal divergence]\label{prop:u cont-nonlocal divergence}
    Under the conditions of \cref{thm:u cont-dual representation} for all $u\in C_b(\X)$ and $\mathbf{m}\in \MM$ the following identity holds true for $i\in\{0,1\}$: 
    \begin{align}\label{eq:int-by-parts}
        \int_\X\int_\X
        \grad_\eps[u](x,y)
        \dd m_x
        \dd\rho_i(x)
        =
        -\int_\X 
        u(y)\dd\div_\eps^{\rho_i}[\mathbf{m}].
    \end{align}
    Moreover, we have
    \begin{align}
        \tv_\eps(u)
        =
        \max_{\underline{\mathbf{m}}\in\MM\times\MM} 
        -\int_\X 
        u(x)\dd \div_\eps^{\Frho}[\underline{\mathbf{m}}](x).
    \end{align}
\end{proposition}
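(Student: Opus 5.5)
The plan is to verify the integration-by-parts identity \eqref{eq:int-by-parts} directly from \cref{def:nonlocal_divergence_random_walk}, by computing $\int_\X u\dd\div_\eps^{\rho_i}[\mathbf{m}]$ first for indicator functions and then extending. After that, the second assertion follows by combining the identity with \cref{prop:u cont-nonlocal gradient}.

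First, fix $\mathbf{m}\in\MM$ and $i$. I will check that $\div_\eps^{\rho_i}[\mathbf{m}]$ is a finite signed measure. It is the difference of the two set functions $A\mapsto\int_A m_x(\X)\dd\rho_i(x)=\rho_i(A)$ and $A\mapsto\nu_i(A)\coloneq\int_\X m_x(A)\dd\rho_i(x)$, scaled by $1/\eps$. The map $x\mapsto m_x(A)$ is measurable by \cref{def:random walk}(i). Countable additivity of $\nu_i$ follows from monotone convergence, and $\nu_i(\X)=\rho_i(\X)\le 1$. So both pieces are finite positive measures, and
\begin{align}
\div_\eps^{\rho_i}[\mathbf{m}]=\tfrac1\eps(\rho_i-\nu_i).
\end{align}

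Next, take $u=\Ie_A$. Then $\int_\X u\dd\nu_i=\int_\X\int_\X u(y)\dd m_x(y)\dd\rho_i(x)$ holds by definition of $\nu_i$. By linearity it extends to simple functions. For bounded measurable $u$, in particular $u\in C_b(\X)$, I approximate uniformly by simple functions $u_n$. Both sides converge: the left because $\nu_i$ is finite, and the right because $\abs{\int u_n\dd m_x-\int u\dd m_x}\le\norm{u_n-u}_\infty$ uniformly in $x$, with measurability of the inner integral given by \cref{lem:random walks measurable}. Hence
\begin{align}
-\int_\X u\dd\div_\eps^{\rho_i}[\mathbf{m}]=\frac1\eps\left(\int_\X\int_\X u(y)\dd m_x(y)\dd\rho_i(x)-\int_\X u(x)\dd\rho_i(x)\right).
\end{align}
Using $m_x(\X)=1$, the right-hand side equals $\int_\X\int_\X\grad_\eps[u](x,y)\dd m_x(y)\dd\rho_i(x)$, which is \eqref{eq:int-by-parts}. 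Boundedness of $u$ is exactly what makes every integral finite and the splitting legitimate, which explains the restriction to $C_b(\X)$.

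Finally, for $\underline{\mathbf{m}}=(\mathbf{m}^0,\mathbf{m}^1)\in\MM\times\MM$, I apply \eqref{eq:int-by-parts} with $i=0$ to $\mathbf{m}^0$ and with $i=1$ to $\mathbf{m}^1$, and subtract. Linearity of the integral in the measure, together with the definition of $\div_\eps^\Frho$, shows that the objective in the first equality of \cref{prop:u cont-nonlocal gradient} equals $-\int_\X u\dd\div_\eps^\Frho[\underline{\mathbf{m}}]$ for every admissible pair. Taking the maximum over $\MM\times\MM$ then gives the claim, and the maximum is attained because it is attained in \cref{prop:u cont-nonlocal gradient}.

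The only delicate point is the measure-theoretic bookkeeping. One must confirm that $\nu_i$ is countably additive and that the identity $\int u\dd\nu_i=\iint u\dd m_x\dd\rho_i$ holds, a Fubini-type statement for kernels. The simple-function approximation argument above handles both without invoking Fubini--Tonelli on a product measure, so I do not expect a genuine obstacle.
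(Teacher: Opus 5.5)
Your proposal is correct and follows essentially the same route as the paper. You verify the identity for indicators $u=\Ie_A$ directly from \cref{def:nonlocal_divergence_random_walk}, extend by linearity to simple functions and then to $u\in C_b(\X)$ by approximation, and combine the result with \cref{prop:u cont-nonlocal gradient}. The differences are minor: you use uniform approximation where the paper uses pointwise bounded approximation with dominated convergence, and you additionally check that $\div_\eps^{\rho_i}[\mathbf{m}]=\frac{1}{\eps}(\rho_i-\nu_i)$ is a genuine finite signed measure, which the paper leaves implicit.
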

\begin{proof}
    Taking into account \cref{prop:u cont-nonlocal gradient,def:nonlocal_divergence_random_walk} it suffices to show \eqref{eq:int-by-parts}.
    To see this, consider first the case where $u=\Ie_A$ for $A\in\BB_\X$.
    Then,
    \begin{align}
        \int_\X\int_\X\grad_\eps[u](x,y)\dd m_x(y)\dd\vrho_i(x)
        &=
        \frac{1}{\eps}\left[\int_\X\int_A\dd m_x^i(y)\dd \vrho_i(x)-\int_A\int_\X \dd m_x^i(y)\dd\vrho_i(x)\right]
        \\
        &=
        \frac{\int_\X m_x^i(A)\dd\vrho_i(x)-\int_A m_x^i(\X)\dd\vrho_i(x)}{\eps}
        \\
        &=
        -\div_\eps^{\rho_i}(A)
        \\
        &=
        -\int_\X u(x)\dd(\div_\eps^{\vrho_i}[\mathbf{m}])(x).
    \end{align}
    By linearity the same is true for a simple function $u$. 
    Approximating $u\in C_b(\X)$ by simple functions which converge pointwise and are uniformly bounded, we can apply the  dominated convergence theorem to get the desired statement \eqref{eq:int-by-parts}.
\end{proof}
Finally, we will prove that the divergence $\div_\eps^\Frho$ from \cref{def:nonlocal_divergence_random_walk} is actually the unique probability measure which allows for an integration-by-parts formula like \eqref{eq:int-by-parts}.
Since we use duality techniques to show this, we restrict ourselves to $u\in C_0(\X)$, i.e., the space of continuous functions vanishing at infinity, the dual space of which coincides with the space of finite signed Radon measures.
Recall that a real function $u$ on a locally compact Hausdorff space $\X$ is said to \emph{vanish at infinity} if for every $\delta>0$ there exists a compact set $K\subset\X$ such that $\abs{f(x)}<\delta$ for all $x\in\X\setminus K$. 
Note that it holds $C_0(\X)\subset C_b(\X)$.

\begin{definition}\label{def:u cont-nonlocal divergence}
    Under the conditions of \cref{thm:u cont-dual representation}, we call a finite signed Radon measure $\mu$ a nonlocal divergence of a random walk $\mathbf{m}\in RW(\X)$ with weight $\rho_i$ for $i\in\{0,1\}$ if 
    \begin{align}\label{eq:divergence equation}
        -\int_\X u(x)
        \dd\mu(x)
        =
        \int_\X\int_{\X} \grad_\eps[u](x,y) \dd m_x(y)\dd\rho_i(y)
        \qquad
        \text{for all } u\in C_0(\X).
    \end{align}
\end{definition}
By applying the Riesz--Markov--Kakutani theorem, we show that \eqref{eq:divergence equation} admits a unique solution, given by our nonlocal weighted divergence $\div_\eps^{\rho_i}[\mathbf{m}]$.
\begin{proposition}\label{prop:u cont-nonlocal divergence solution}
    For each $\mathbf{m}\in RW(\X)$ there is a unique solution to \eqref{eq:divergence equation} which is given by the finite signed Radon measure $\div_\eps^{\rho_i}[\mathbf{m}]$ defined in \cref{def:nonlocal_divergence_random_walk}.
\end{proposition}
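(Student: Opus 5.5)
Existence and uniqueness are two separate steps. Note that \eqref{eq:divergence equation} has a typo: the outer integration is against $\dd\rho_i(x)$, and I read it that way.

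\emph{Existence.} I show that $\div_\eps^{\rho_i}[\mathbf{m}]$ is a finite signed Radon measure solving \eqref{eq:divergence equation}.

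First, $A\mapsto \int_A m_x(\X)\dd\rho_i(x)=\rho_i(A)$ is a finite Borel measure. Second, $A\mapsto\int_\X m_x(A)\dd\rho_i(x)$ is a finite Borel measure as well: $x\mapsto m_x(A)$ is measurable by \cref{def:random walk}~(i), it is bounded by $1$, and countable additivity follows from monotone convergence. Hence $\div_\eps^{\rho_i}[\mathbf{m}]$ is the difference of two finite positive Borel measures. Since $\X$ is Polish, every finite Borel measure on it is Radon (tight and regular), so $\div_\eps^{\rho_i}[\mathbf{m}]$ is a finite signed Radon measure.

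The identity \eqref{eq:divergence equation} for $u\in C_0(\X)\subset C_b(\X)$ is exactly \eqref{eq:int-by-parts} from \cref{prop:u cont-nonlocal divergence}. Its proof did not use the support condition defining $\MM$, only that each $m_x$ is a probability measure depending measurably on $x$, so it applies to every $\mathbf{m}\in RW(\X)$. I would remark on this explicitly and repeat the simple-function and dominated-convergence argument with bound $2\|u\|_\infty/\eps$.

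\emph{Uniqueness.} Let $\mu_1,\mu_2$ be two finite signed Radon measures satisfying \eqref{eq:divergence equation}. Subtracting the two identities gives $\int_\X u\dd(\mu_1-\mu_2)=0$ for all $u\in C_0(\X)$. Since $\X$ is a proper metric space, it is locally compact Hausdorff. By the Riesz--Markov--Kakutani theorem, the map $\mu\mapsto(u\mapsto\int u\dd\mu)$ is an isometric isomorphism from the finite signed Radon measures onto $C_0(\X)^*$, and in particular it is injective. Therefore $\mu_1=\mu_2$.

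If one prefers to avoid quoting injectivity, there is a direct argument. Take the Jordan decomposition $\mu_1-\mu_2=\sigma^+-\sigma^-$ with a Hahn set $P$. By inner regularity, choose a compact $K\subset P$, and by outer regularity an open $U\supset K$ with small $|\sigma|(U\setminus K)$. Urysohn's lemma with compact support, which holds by local compactness, then gives a test function showing $\sigma^+(K)$ is small, so $\sigma^+=0$; likewise $\sigma^-=0$.

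\emph{Main obstacle.} The only delicate point is the Radon property of $\div_\eps^{\rho_i}[\mathbf{m}]$, which is needed so that Riesz's theorem applies in the correct class. It comes for free from $\X$ being Polish. The rest is bookkeeping: the integrability of $x\mapsto\int\grad_\eps[u](x,y)\dd m_x(y)$, which follows from boundedness of $u$ and \cref{lem:random walks measurable}.
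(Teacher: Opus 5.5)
Your argument is correct and uses the same two ingredients as the paper: the identity \eqref{eq:int-by-parts} and the Riesz--Markov--Kakutani theorem on the locally compact space $\X$. The paper only orders them differently, first bounding $u\mapsto-\int_\X\int_\X\grad_\eps[u](x,y)\dd m_x(y)\dd\rho_i(x)$ by $\frac{2\rho_i(\X)}{\eps}\norm{u}_\infty$ to obtain a unique representing measure, then identifying it via \eqref{eq:int-by-parts}; your version is more careful, since it spells out what the paper uses tacitly (that $\div_\eps^{\rho_i}[\mathbf{m}]$ is Radon because $\X$ is Polish, and that \eqref{eq:int-by-parts} holds on all of $RW(\X)$, not only $\MM$) and avoids the sign slip in the paper's final line, which writes $\mu=-\div_\eps^{\rho_i}[\mathbf{m}]$.
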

\begin{proof}
     First, for $\mathbf{m} \in RW(\X)$ define the map $G_\mathbf{m}\colon C_0(\X)\to\IR$ by
    \begin{align}
        G_\mathbf{m}[u]\coloneq -\int_\X\int_{\X} 
        \grad_\eps[u](x,y)
        \dd m_x(y)\dd\rho_i(y).
    \end{align}
    Since the nonlocal gradient and integrals are linear maps, the same is true for $G_\mathbf{m}$. Next, we show that $G_\mathbf{m}$ is a bounded which will imply that $G_\mathbf{m}\in C_0(\X)^*$.
    For any $u\in C_0(\X)$ we have
    \begin{align}
        \abs{G_\mathbf{m}[u]}
        \leq
        \int_\X
        \int_\X 
        \abs{\frac{u(y)-u(x)}{\eps}}
        \dd m_x(y)\d\rho_i(y)
        \leq 
        \frac{2\rho_i(\X)}{\eps}
        \norm{u}_\infty
    \end{align}
    which shows that $G_\mathbf{m}$ is indeed bounded.
    The application of the Riesz--Markov--Kakutani theorem (see, e.g., \cite[Theorem 1.54]{ambrosio2000functionsofBV}) yields the existence of a uniquely determined finite signed Radon measure $\mu$ on $\X$ such that $G_\mathbf{m}[u]=\int_\X u\dd\mu$.
    The fact that $\mu=-\div_\eps^{\rho_i}[\mathbf{m}]$ follows directly from identity \eqref{eq:int-by-parts}.
\end{proof}

\subsection{Integral Characterization of the Subdifferential}\label{u cont-subdifferential}

Finally, we provide an integral characterization of the subdifferential of the adversarial total variation. Following \cite{bredies2016pointwise}, we use the dual formulation of the functional from \cref{thm:u cont-dual representation} to identify the subgradients. In our setting, we have to show that the set of nonlocal divergences corresponding to admissible random walks is convex and closed with respect to weak convergence of measures.
\begin{proposition}\label{prop:u cont-subdiff characterization}
    Under the conditions of \cref{thm:u cont-dual representation} and assuming that $\X$ is compact, let $u\in C_0(\X)$.
    Then, $\mu^*\in \p\tv_\eps(u)$ if and only if 
    \begin{align}
        \begin{dcases}
            &\text{there exists a pair of random walks } \underline{\mathbf{m}} \in RW(\X)\times RW(\X)
            \\
            &\text{such that } \supp m_x^i\subset{\clball{\eps}{x}}\text{ for }\vrho\mhyphen\text{a.e. } x\in\X\text{ and }i\in\{0,1\},
            \\ 
            &\mu^*=-\div_\eps^\Frho[\underline{\mathbf{m}}]\text{ and }
            \tv_\eps(u)=\int_\X u\dd\mu^*.
        \end{dcases}
    \end{align}
\end{proposition}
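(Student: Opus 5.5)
The plan is to follow the standard argument for positively one-homogeneous convex functionals in \cite{bredies2016pointwise}. Set $K\coloneq\set{-\div_\eps^\Frho[\underline{\mathbf{m}}]}{\underline{\mathbf{m}}\in\MM\times\MM}\subset\M_f^\pm(\X)=C_0(\X)^*$. By \cref{thm:u cont-dual representation}, $\tv_\eps(v)=\max_{\mu\in K}\int_\X v\dd\mu$ for all $v\in C_0(\X)\subset C_b(\X)$. So $\tv_\eps$ is the support function of $K$. It is convex, positively one-homogeneous, and finite (bounded by $\tfrac{2\rho(\X)}{\eps}\norm{v}_\infty$), hence continuous. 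Its conjugate is the indicator of the weak-* closed convex hull $\overline{\mathrm{co}}^*K$.

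For a one-homogeneous $f$ one has $\mu^*\in\p f(u)$ if and only if $f^*(\mu^*)=0$ and $\langle\mu^*,u\rangle=f(u)$. This follows from Fenchel--Young: since $f(tu)=tf(u)$, the subgradient inequality tested at $y=0$ and $y=2u$ gives $\langle\mu^*,u\rangle=f(u)$, and then $\langle\mu^*,v\rangle\le f(v)$ for all $v$. Hence $\p\tv_\eps(u)=\set{\mu^*\in\overline{\mathrm{co}}^*K}{\int_\X u\dd\mu^*=\tv_\eps(u)}$.

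The implication ``$\Leftarrow$'' is then immediate. If $\mu^*=-\div^\Frho_\eps[\underline{\mathbf m}]$ with $\underline{\mathbf m}\in\MM\times\MM$, then $\int v\dd\mu^*\le\tv_\eps(v)$ for every $v$ by \cref{prop:u cont-nonlocal divergence} and the maximum formula. Together with $\int u\dd\mu^*=\tv_\eps(u)$ this gives $\tv_\eps(v)\ge\tv_\eps(u)+\int(v-u)\dd\mu^*$.

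For ``$\Rightarrow$'' it remains to show that $K$ is convex and weak-* closed, so that $\overline{\mathrm{co}}^*K=K$.

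\emph{Convexity.} The map $\underline{\mathbf m}\mapsto\div_\eps^\Frho[\underline{\mathbf m}]$ is affine in the kernels, and $\MM$ is convex: convex combinations $t m_x+(1-t)m'_x$ keep measurability, finite first moments and the support condition.

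\emph{Closedness.} This is the main obstacle. Since $\X$ is compact, $C_0(\X)=C(\X)$ is separable, and $K$ is bounded in total variation by $2\rho(\X)/\eps$. By Krein--Šmulian it therefore suffices to check weak-* sequential closedness. I would pass to the joint measures $\gamma^i_n\coloneq\rho_i\otimes m^{i,n}_x\in\M_f(\X\times\X)$, with $\gamma^i_n(A\times B)=\int_A m^{i,n}_x(B)\dd\rho_i(x)$. These have first marginal $\rho_i$ and are concentrated, up to $\rho_i$-null sets, on the closed set $D\coloneq\set{(x,y)}{\dd(x,y)\le\eps}$. Moreover $-\div^{\rho_i}_\eps[\mathbf m^{i,n}]=\eps^{-1}\bigl((\pi_2)_\sharp\gamma^i_n-\rho_i\bigr)$.

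By Prokhorov on the compact space $\X\times\X$, a subsequence satisfies $\gamma^i_n\rightharpoonup\gamma^i$. The limit still has first marginal $\rho_i$, and $\gamma^i(D)=1\cdot\rho_i(\X)$ because $D$ is closed (Portmanteau). Disintegration on the Polish space $\X$ gives $\gamma^i=\rho_i\otimes m^i_x$ with a measurable kernel, and $\supp m^i_x\subset\clball{\eps}{x}$ for $\rho_i$-a.e.\ $x$. The kernel can be redefined on the null set, e.g.\ as $\delta_x$, to make it an element of $RW(\X)$; first moments are finite by compactness. Since the second-marginal projection is weakly continuous, $-\div^\Frho_\eps[\underline{\mathbf m}^n]\rightharpoonup-\div^\Frho_\eps[\underline{\mathbf m}]$. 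Uniqueness of weak-* limits then places any limit point of $K$ in $K$.

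The delicate points are two. First, the support condition in $\MM$ refers to ``$\rho$-a.e.'' while each part only controls $\rho_i$-a.e. I would handle this by setting $m^i_x\coloneq\delta_x$ off the support, which does not change $\div^{\rho_i}_\eps$. Second, the disintegration must yield a Borel kernel in the sense of \cref{def:random walk}.
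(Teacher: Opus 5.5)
Your proposal is correct and follows essentially the same route as the paper. You write $\tv_\eps$ as the support function of the set $P$ of measures $-\div_\eps^\Frho[\underline{\mathbf{m}}]$, $\underline{\mathbf{m}}\in\MM\times\MM$, and identify $\tv_\eps^*$ with the indicator of its weak-* closed convex hull. Via Fenchel--Young (your one-homogeneity argument) you reduce the claim to convexity and closedness of $P$, which you prove exactly as in \cref{lem:compactness_RW}: pass to the semidirect products $\rho_i\otimes m_x$, apply Prokhorov on $\X\times\X$, disintegrate, and check that the first marginal $\rho_i$ and the support condition survive the limit. Your extra remarks tidy up points the paper passes over quickly. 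These are the separability/boundedness upgrade from sequential to genuine weak-* closedness, Portmanteau on the closed set $\{\mathrm{d}(x,y)\le\eps\}$ in place of the Urysohn-type \cref{lem:measure support}, and redefining the limit kernels on the Borel $\rho_i$-null bad set to obtain the $\rho$-a.e.\ support condition required in $\MM$.
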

Before we prove this result we need two lemmas that are concerned with a suitable notion of compactness for random walks.
\begin{lemma}\label{lem:measure support}
    Let $X$ be a metric space, $F\subset X$ be open, and $p\in\M_f(X)$ a finite measure such that $\int_X f\dd p=0$ for all $f\in C_b(X)$ with $f\equiv 0$ in $X\setminus F$. 
    Then, $\supp p \subset X\setminus F$.
\end{lemma}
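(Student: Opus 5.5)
The goal is to show $p(F)=0$. Then $p$ gives no mass to the open set $F$, so the support of $p$ (the set of points all of whose neighbourhoods have positive measure) is contained in $X\setminus F$. Indeed, if $x\in F$, then $F$ itself is an open neighbourhood of $x$ with $p(F)=0$, so $x\notin\supp p$.

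To prove $p(F)=0$, I will approximate $\Ie_F$ from below by bounded continuous functions that vanish outside $F$. Set $f_n(x)\coloneq\min\{1,\,n\,\mathrm{dist}(x,X\setminus F)\}$ for $n\in\IN$; if $F=X$, take $f_n\equiv 1$. Each $f_n$ has the following properties:
\begin{itemize}
\item it is Lipschitz, hence continuous, and takes values in $[0,1]$, so $f_n\in C_b(X)$;
\item it vanishes on $X\setminus F$, so by hypothesis $\int_X f_n\dd p=0$.
\end{itemize}
Because $X\setminus F$ is closed, $\mathrm{dist}(x,X\setminus F)>0$ for every $x\in F$. Hence $f_n(x)\uparrow 1$ for $x\in F$, while $f_n\equiv 0$ on $X\setminus F$. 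In other words, $f_n\uparrow\Ie_F$ pointwise.

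By monotone convergence (or dominated convergence, using that $p$ is finite and $0\le f_n\le 1$), we obtain $p(F)=\int_X\Ie_F\dd p=\lim_n\int_X f_n\dd p=0$. This is the step I expect to be the main point, and it is routine. The only care needed is the degenerate case $X\setminus F=\emptyset$, where the distance is not defined and the constant choice $f_n\equiv 1$ handles it. Since $p$ is a nonnegative measure, no Jordan decomposition is required. Combined with the first paragraph, this gives $\supp p\subset X\setminus F$.
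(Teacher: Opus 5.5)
Your proof is correct, but it follows a different route from the paper's. The paper argues by contradiction and pointwise. Assuming some $z\in F$ lies in $\supp p$, it takes an open neighbourhood $O\subset F$ of $z$ and uses Urysohn's lemma to get a continuous $f\colon X\to[0,1]$ with $f(z)=1$ and $f\equiv 0$ off $O$. It then derives $0<\int_{\tilde O} f\dd p\le\int_X f\dd p=0$ on a smaller neighbourhood $\tilde O$ of $z$ where $f>0$.

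You instead argue directly and globally. The Lipschitz cutoffs $\min\{1,n\,\mathrm{dist}(\cdot,X\setminus F)\}$ increase to $\Ie_F$, so monotone convergence gives $p(F)=0$, and the support inclusion follows at once.

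Your route gives the stronger conclusion $p(F)=0$, and this is what is actually needed when the lemma is applied in the proof of \cref{lem:compactness_RW}. There one needs $p(B^c)=0$ for the open set $B^c$. Going through $\supp p\subset B$, as the paper does, tacitly uses that the complement of the support is $p$-null, which holds there only because $\X\times\X$ is separable. The paper's argument, in turn, works directly with the definition of the support and uses a single test function rather than a limiting procedure.
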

\begin{proof}
    Aiming for a contradiction, assume that $\supp p \not\subset X\setminus F$ which implies that there exists $z\in F$ such that every open neighborhood of $z$ has positive measure. 
    Let $O\subset F$ be an open neighborhood of $z$.
    By applying Urysohn's Lemma we obtain a continuous function $f\colon \X\to[0,1]$ such that $f(z)=1$ and $f\equiv 0$ on $X\setminus O$, in particular, $f\equiv 0$ in $X\setminus F$. 
    Using $f(z)=1$ and the continuity of $f$, we can find a neighborhood $\tilde{O}\subset O$ of $z$ such that $f>0$ on $\tilde{O}$.
    As $p(\tilde{O})>0$, we get
    \begin{align}
        0<\int_{\tilde{O}} f \dd p\leq \int_X f\dd p=0
    \end{align}
    which is a contradiction. Hence, $\supp p\subset X\setminus F$.
\end{proof}
\begin{lemma}\label{lem:compactness_RW}
    Let $\X$ be compact, $\rho\in\M_f(\X)$ be a finite measure, and $(\mathbf{m}_n)_{n\in\IN}\subset RW(\X)$. 
    Then, there exists $\mathbf{m}\in RW(\X)$ and a subsequence $(\mathbf{m}_{n_k})_{k\in\IN}\subset(\mathbf{m}_n)_{n\in\IN}$ such that
    \begin{align}
        \lim_{k\to\infty}
        \int_\X
        \int_\X
        g(x,y)\d m_{n_k,x}(y)\d\rho(x)
        =
        \int_\X
        \int_\X
        g(x,y)\d m_{x}(y)\d\rho(x)
        \qquad
        \text{for all } g\in C(\X\times\X).
    \end{align}
    If, moreover, $\supp m_{n,x}\subset\clball{\eps}{x}$ for $\rho$-almost every $x\in\X$ and all $n\in\IN$, then $\supp m_{x}\subset\clball{\eps}{x}$ for $\rho$-almost every $x\in\X$.
\end{lemma}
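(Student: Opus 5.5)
The plan is to reduce the statement to compactness of probability measures on the compact space $\X\times\X$, followed by disintegration. To each random walk $\mathbf{m}_n$ associate the measure $\gamma_n\in\M_f(\X\times\X)$ defined by
\begin{align}
    \gamma_n(A\times B)\coloneq\int_A m_{n,x}(B)\d\rho(x),
\end{align}
which is well defined by the measurability in \cref{def:random walk}, extended in the standard way (e.g.\ via \cref{lem:random walks measurable} applied to sections of $\Ie_E$, or a monotone class argument). Every $\gamma_n$ has first marginal $\rho$ and total mass $\rho(\X)$, and
\begin{align}
    \int_{\X\times\X} g\d\gamma_n=\int_\X\int_\X g(x,y)\d m_{n,x}(y)\d\rho(x)\qquad\text{for all } g\in C(\X\times\X).
\end{align}
Since $\X\times\X$ is compact, the family $(\gamma_n)$ is uniformly bounded and trivially tight, so Prokhorov's theorem (or Banach--Alaoglu in $C(\X\times\X)^*$, which is separable here) yields a subsequence $\gamma_{n_k}$ converging weakly to some $\gamma\in\M_f(\X\times\X)$. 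Testing with functions $g(x,y)=h(x)$ shows that the first marginal of $\gamma$ is still $\rho$.

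Next apply the disintegration theorem on the Polish space $\X$ (e.g.\ \cite[Theorem 2.28]{ambrosio2000functionsofBV}). This gives a $\rho$-measurable family of probability measures $(m_x)_{x\in\X}$ with $\gamma=\rho\otimes m_x$, i.e.\ $\int g\d\gamma=\int\int g(x,y)\d m_x(y)\d\rho(x)$, which is exactly the claimed limit identity. The main technical point is to check that $\mathbf{m}\in RW(\X)$. Measurability in $x$ is provided by the disintegration only up to a $\rho$-null set, or with respect to the $\rho$-completion. I would fix this by choosing a Borel version: redefine $m_x\coloneq\delta_x$ (or any fixed probability measure) on a $\rho$-null Borel set outside of which $x\mapsto m_x(B)$ is Borel, which does not affect the integrals. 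Finite first moments are automatic because $\X$ is compact, hence bounded.

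For the support statement, set $F\coloneq\{(x,y)\in\X\times\X : \dd(x,y)>\eps\}$, which is open. For every $g\in C(\X\times\X)$ with $g\equiv0$ off $F$, we have $\int g\d\gamma_n=0$, because $m_{n,x}$ vanishes outside $\clball{\eps}{x}$ for $\rho$-a.e.\ $x$. Passing to the limit gives $\int g\d\gamma=0$, so \cref{lem:measure support} (with $X=\X\times\X$ and $p=\gamma$) yields $\gamma(F)=0$. Then $0=\gamma(F)=\int_\X m_x(\X\setminus\clball{\eps}{x})\d\rho(x)$, hence $m_x(\X\setminus\clball{\eps}{x})=0$ for $\rho$-a.e.\ $x$. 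Since $\clball{\eps}{x}$ is closed, this gives $\supp m_x\subset\clball{\eps}{x}$.

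I expect the obstacle to lie in obtaining an everywhere-defined, Borel-measurable version of the disintegration rather than in the compactness step itself.
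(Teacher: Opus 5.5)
Your proposal is correct and follows the paper's proof essentially step for step: semidirect-product measures on $\X\times\X$, Prokhorov on the compact product, identification of the first marginal as $\rho$, disintegration, and the support claim via \cref{lem:measure support} applied to the open set $\{(x,y)\st \mathrm{d}(x,y)>\eps\}$, which is exactly the complement of the paper's set $B$. Your extra step of choosing an everywhere-defined Borel version of the disintegrated family is a genuine improvement, since the paper simply asserts that the disintegration satisfies \cref{def:random walk}; to get the single $\rho$-null Borel set you need a countable generating $\pi$-system for $\BB_\X$, and then setting $m_x\coloneq\delta_x$ on that set preserves the support condition.
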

\begin{proof}
    If $\vrho(\X)=0$, then $\vrho=0$ and the statement holds trivially. 
    Hence, without loss of generality, assume that $\vrho$ is a probability measure, otherwise we replace $\vrho$ by $\frac{1}{\vrho(\X)}\vrho$.
    For each $n\in\IN$ there exists the so called semidirect product,  see \cite[Theorem 6.11]{ccinlar2011probability}, that is a unique probability measure $p_n$ on the product space $\X\times\X$ equipped with the product Borel $\sigma$-algebra $\BB_\X\otimes\BB_\X$ satisfying
    \begin{align}\label{eq:semiproduct def}
        p_n(A\times B)=\int_A m_{n,x}(B)\dd\vrho(x)\qquad\text{for }A,B\in \BB_\X,
    \end{align}
    and 
    \begin{align}
        \iint_{\X\times\X} f(x,y)\dd p_n(x,y)=\int_\X \int_\X f(x,y)\dd m_{n,x}(y)\dd\vrho(x)
    \end{align}
    for any $\BB_\X\otimes\BB_\X$-measurable function $f\colon \X\times\X\to\IR$. The latter is a direct consequence of \cite[Theorem 6.11]{ccinlar2011probability} by splitting $f$ in positive and negative part.
    Since $\X$ is compact, so is $\X\times\X$ and we can apply Prokhorov's theorem to obtain a subsequence $(p_{n_k})_{k\in\IN}\subset(p_n)_{n\in\IN}$ and $p\in\P(\X\times\X)$ such that $p=\wlim{k\to\infty}p_{n_k}$. Next, define $q\coloneq \pi_\sharp p$ where $\pi$ is the projection on the first factor. The application of the disintegration theorem, see \cite[Theorem 2.28]{ambrosio2000functionsofBV}, yields the existence of a $q$-almost everywhere uniquely determined family of probability measures $\{m_x\}_{x\in\X}$ satisfying the properties of a random walk on $\X$ and
    \begin{align}
        p(A\times B)=\int_A m_x(B)\dd q(x)\qquad \text{for }A,B\in\BB_\X.
    \end{align}
    Furthermore, for any measurable $f\colon \X\times\X\to\IR$ we have
    \begin{align}
        \iint_{\X\times\X} f(x,y)\dd p(x,y)=\int_\X \int_\X f(x,y)\dd m_{x}(y)\dd q(x)
    \end{align}
    Next, for $n\in\IN$ define $q_n\coloneq \pi_\sharp p_n$ and note that for any $A\in\BB_\X$ we have
    \begin{align}
        q_n(A)=\pi_\sharp p_n(A)=p_n(A\times\X)=\int_A m_{n,x}(\X)\dd\vrho(x)=\vrho(A)
    \end{align}
    and hence trivially $\vrho=\wlim{k\to\infty}{q_{n_k}}$.
    Furthermore, since for any $f\in C(\X)$, the function $f\circ\pi$ lies in $C(\X\times\X)$, it holds that
    \begin{align}
        \int_\X f(x)\dd q(x)&=
        \iint_{\X\times\X} f(\pi(x,y))\dd p(x,y)=
        \lim_{k\to\infty} \iint_{\X\times\X} f(\pi(x,y))\dd p_{n_k}(x,y)\\&=\lim_{k\to\infty} \int_\X f(x)\dd q_{n_k}(x)
    \end{align}
    which implies $q=\wlim{k\to\infty}q_{n_k}$.
    So, by the uniqueness of the weak limit we have $q=\vrho$ and, in particular,
    \begin{align}
        p(A\times B)=\int_A m_x(B) \dd\vrho(x) \qquad \text{for }A,B\in\BB_\X.
    \end{align}
    Lastly, for any $g\in C(\X\times\X)$ we obtain
    \begin{align} 
        \lim_{k\to\infty} \int_\X\int_\X g(x,y)\dd m_{n_k,x}(y)\dd\vrho(x)
        &= 
        \lim_{k\to\infty} \iint_{\X\times\X} g(x,y)\dd p_{n_k}(x,y)
        \\
        &=
        \iint_{\X\times\X} g(x,y)\dd p(x,y)=\int_\X\int_\X g(x,y)\dd m_x(y)\dd\vrho(x).
    \end{align}
    For the second statement assume that $\supp m_{n,x}\subset \clball{\eps}{x}$ for $\vrho$-almost every $x\in\X$ and for all $n\in\IN$. Furthermore, define $\tilde{p}_n\colon \BB_\X\otimes\BB_\X\to\IR$ by
    \begin{align}
        \tilde{p}_n(E)\coloneq \int_\X m_{n,x}(E_x)\dd\vrho(x)\qquad\text{for }E\in \BB_\X\otimes\BB_\X
    \end{align}
    where $E_x\coloneq \set{y\in\X}{(x,y)\in E}$. Note that for two disjoint sets $E^1,E^2\subset \X\times\X$ we have that $(E^1\cup E^2)_x=E^1_x\cup E^2_x$ and thus $\tilde{p}_n$ defines a finite measure on $(\X\times\X,\BB_\X\otimes\BB_\X)$. Additionally, for $E=A\times B$, where $A,B\in\BB_\X$, we have that $E_x=B$ for $x\in A$ and $E_x=\emptyset$ otherwise and hence
    \begin{align}
        \tilde{p}_n(A\times B)=\int_\A m_{n,x}(B)\dd\vrho(X)=p_n(A\times B).
    \end{align}
    Since $p_n$ is the unique measure satisfying \eqref{eq:semiproduct def}, we obtain that $p_n=\tilde{p_n}$.
    Next, define
    \begin{align}
        B\coloneq \set{(x,y)\in\X\times\X}{y\in\clball{\eps}{x}}
    \end{align}
    and observe that for the complement it holds that $(B^c)_x=(\clball{\eps}{x})^c$ which implies that
    \begin{align}
        p_n(B^c)=\int_\X m_{n,x}((\clball{\eps}{x})^c)\dd\vrho(x)=0
    \end{align}
    and thus for all $f\in C_b(\X\times\X)$ that are only supported outside of $B$ we have
    \begin{align}
        \int_\X\int_\X f(x,y)\dd p(x,y)=\lim_{k\to\infty} \int_\X\int_\X f(x,y)\dd p_{n_k}(x,y)=0
    \end{align}
    and by \cref{lem:measure support} we obtain $\supp p\subset B$. 
    Similarly as before we show that
    \begin{align}
        p(E)=\int_\X m_x(E_x)\dd\vrho(X)\qquad\text{for }E\in \BB_\X\otimes\BB_\X.
    \end{align}
    Hence, 
    \begin{align}
        \int_\X m_x( (\clball{\eps}{x})^c)\dd\vrho(x)=p(B^c)=0
    \end{align}
    implying that $\supp m_x\subset\clball{\eps}{x}$ for $\vrho$-a.e. $x\in\X$.
\end{proof}
Now that we secured the compactness of $\MM$ we can finally prove \cref{prop:u cont-subdiff characterization}.
\begin{proof}[Proof of \cref{prop:u cont-subdiff characterization}]
    By \cref{thm:u cont-dual representation} we have for $u\in C_0(\X)$ that
    \begin{align}
        \tv_\eps(u)&=
        \max_{\underline{\mathbf{m}}\in\MM\times\MM}-\int_\X u(x)\dd\div_\eps^\Frho[\underline{\mathbf{m}}](x)
        \\
        &=
        \max_{\mu\in\M_f^\pm(\X)}\int_\X u(y) \dd\mu(y)-\chi_{P}(\mu)=\chi_P^*(u)
    \end{align}
    where $P\coloneq \set{-\div_\eps^\Frho[\underline{\mathbf{m}}]}{\underline{\mathbf{m}}\in\MM\times\MM}\subset M^\pm_f(\X)$. 
    As shown in \cite[Example 4.3]{ekeland1999}, for $\mu\in \M_f(\X)$ it holds that
    \begin{align}
        \tv^*_\eps(\mu)=\chi^{**}_P(\mu)=\chi_{\overline{\mathrm{conv}}(P)}(\mu),
    \end{align}
    where the closure is taken with respect to weak measure convergence. Applying the Fenchel--Young inequality, we have that $\mu^*\in\p\tv_\eps(u)$ if and only if 
    \begin{align}
       \tv_\eps(u)+\tv_\eps^*(\mu^*)=\int_\X u\dd \mu^*.
    \end{align}
    Hence, the proof is finished if we show that $P$ is convex and closed. First, to prove convexity note that for $\mathbf{m}^0,\mathbf{m}^1\in\MM$  their convex combination $\mathbf{m}^\theta\in RW(\X)$ which is defined through $m_x^\theta \coloneq (1-\theta)m_x^0 + \theta m_x^1$ for $\theta\in[0,1]$ satisfies 
    \begin{align}
        \supp m^\theta_x=\supp((1-\theta) m_{x}^0)\cup\supp (\theta m^1_x)=\supp m^0_x\cup\supp m^1_x\subset{\clball{\eps}{x}}
    \end{align}
    for $\vrho$-a.e. $x\in\X$ which implies that $\MM$ is convex. 
    Due to the linearity of $\div^{\rho_i}_\eps$ in the definition of $\div_\eps^\Frho$, set $P$ is a convex set. 
    Next, let $(p_n)_{n\in\IN}\subset P$ and $p$ be a finite signed measure such that $p=\wlim{n\to\infty}p_n$ is the limit with respect to weak measure convergence. Then, 
    by definition for each $p_n$ there exists $\underline{\mathbf{m}}_n\in\MM\times\MM$ such that $p_n=\div_\eps^\Frho[\underline{\mathbf{m}}_n]$. 
    We apply \cref{lem:compactness_RW} to both components $\mathbf{m}_n^i$ for $i\in\{0,1\}$ to obtain that for a subsequence (which we do not relabel) we have
    \begin{align}\label{eq:RW_convergence}
        \lim_{n\to\infty}
        \int_\X\int_\X 
        g(x,y)
        \dd m_{n,x}^i(y)\d\rho_i(x)
        =
        \int_\X\int_\X 
        g(x,y)
        \dd m_{x}^i(y)\d\rho_i(x),
        \qquad
        \forall g\in C(\X\times\X).
    \end{align}
    Applying \eqref{eq:RW_convergence} to $g:=\grad_\eps[f]\in C(\X\times\X)$ for $f\in C(\X)$, using \eqref{eq:int-by-parts} as well as \cref{prop:u cont-nonlocal divergence} we have
    \begin{align}
        \int_\X f\dd p
        &=
        \lim_{n\to\infty}
        \int_\X f \dd p_n
        \\
        &=\lim_{n\to\infty} \int_\X f \dd\div_\eps^{\rho_0}[\mathbf{m}_n^0]-\lim_{n\to\infty} 
        \int_\X f
        \dd
        \div_\eps^{\rho_1}[\mathbf{m}_n^1]\\
        &=\lim_{n\to\infty} -\int_\X\int_\X\grad_\eps[f](x,y)\dd m_{n,x}^0(y)\dd\vrho_0(x)\\&\quad-\lim_{n\to\infty}-\int_\X\int_\X\grad_\eps[f](x,y)\dd m_{n,x}^1(y)\dd\vrho_1(x)\\
        &=-\int_\X\int_\X \grad_\eps[f](x,y)\dd m_{x}^0(y)\dd\vrho_0(x)\\&\quad+\int_\X\int_\X \grad_\eps[f](x,y)\dd m_{x}^1(y)\dd\vrho_1(x)
        =
        -\int_\X
        f(x)\dd \div_\eps^\Frho[\underline{\mathbf{m}}](x).
    \end{align}
    Since $\X$ is compact, we have $C(\X)=C_0(\X)$ and \cref{prop:u cont-nonlocal divergence solution} shows that $p=-\div_\eps^\Frho[\underline{\mathbf{m}}]\in P$ as equality of finite signed Radon measures.
\end{proof}
To conclude this section, we now show that our result matches with the subgradient formula derived in \cite{bungert2024mean} in the setting where $\X=\Omega\subset\IRN$ is a bounded domain and $u\in C^2(\overline{\Omega})$ is such that $\abs{\nabla u}\geq c$ in $\overline{\Ome}$ for a constant $c>0$. 
In this case, an explicit subgradient of $u$ was constructed in~\cite{bungert2024mean}, but it was not investigated whether this is the only subgradient.

For $\Ome$ define the inner parallel set as $\Ome_\eps\coloneq \set{x\in\Ome}{\mathrm{dist}(x,\IRN\setminus \Ome)>\eps}$. 
Then, under the assumptions above for any $x\in\Ome_\eps$ the sets $\argmax_{{\clball{\eps}{x}}}u$ and $\argmin_{{\clball{\eps}{x}}}u$ are singletons and induce $C^1$ diffeomorphism. 
We show that in this case the subdifferential is a singleton.
\begin{example}\label{ex:u cont-subdiff for singletons}
    Under the conditions of \cref{prop:u cont-subdiff characterization} and for any $u\in C(\X)$ such that the maps
    \begin{align}
        \Gamma_\eps(x)\coloneq \argmax_{{\clball{\eps}{x}}}u\qquad\text{and}\qquad\gamma_\eps(x)\coloneq \argmin_{{\clball{\eps}{x}}}u
    \end{align}
    are singletons for every $x\in\X$, the subdifferential is given by $\p\tv_\eps(u)=\{p\}$ where $p$ is the finite signed Radon measure defined by
    \begin{align}
        p(A)\coloneq \frac{(\Gamma_\eps)_\sharp \vrho_0(A)-\vrho_0(A)}{\eps}+\frac{\vrho_1(A)-(\gamma_\eps)_\sharp \vrho_1(A)}{\eps},
        \qquad A\in\BB_\X.
    \end{align}
    First, due to the assumption on $u$ the mappings $\Gamma_\eps$ and $\gamma_\eps$ reduce to measurable functions on $\X$, see \cref{rmk:Gamma_eps measurable}, and we can rewrite		
        \begin{align}
			\tv_\eps(u)
            &=
            \frac{1}{\eps}\left[\int_\X \big(u(\Gamma_\eps(x))-u(x)\big)\dd\vrho_0(x)+\int_\X\big(u(x)-u(\gamma_\eps(x))\big)\dd \vrho_1(x)\right].
		\end{align}
		On the other hand, due to \cref{prop:u cont-subdiff characterization} for any $\mu^*\in\p\tv_\eps(u)$ there exists $\underline{\mathbf{m}}\in \MM\times\MM$ such that
		\begin{align}
			\tv_\eps(u)
			&=
            \int_\X u\dd \mu^*
			\\
            &=
            \frac{1}{\eps}
            \int_\X
            \left(\int_\X u(y) \dd m_x^0(y)
            - u(x)\right) \dd\vrho_0(x)
            +\frac{1}{\eps}
            \int_\X
            \left(u(x)-\int_\X u(y) \dd m_x^1(y)\right)
            \dd\vrho_1(x).
		\end{align}
		Subtracting both and using that $u(\Gamma_\eps(x))\geq u(y)$ (and similarly $u(\gamma_\eps(x))\leq u(y)$) for all $y\in\clball{\eps}{x}$ yields
		\begin{align}
			\int_\X u(y) \dd m_x^0(y)= u(\Gamma_\eps(x))\quad\text{and}\quad \int_\X u(y)\dd m_x^1(y)=u(\gamma_\eps(x))
		\end{align}
		for $\vrho_0$-a.e. and $\vrho_1$-a.e. $x\in\X$.
		In fact, we even get $u(y)=u(\Gamma_\eps(x))$ for $m_x^0\mhyphen$a.e. $y\in {\clball{\eps}{x}}$ (and similarly for $\gamma_\eps$). Finally, $\Gamma_\eps(x)$ being the unique maximizer of $u$ over ${\clball{\eps}{x}}$ we obtain
		\begin{align}
			m_x^0(\set{y\in{\clball{\eps}{x}}}{u(y)<u(\Gamma_\eps(x))})=m_x^0({\clball{\eps}{x}}\setminus\Gamma_\eps(x))=0
		\end{align}
		and hence $m_x^0=\delta_{\Gamma_\eps(x)}$ for $\vrho_0\mhyphen$a.e. $x\in\X$. Similarly, $m_x^1=\delta_{\gamma_\eps(x)}$ for $\vrho_1\mhyphen$a.e. $x\in\X$. Note, that by definition of the pushforward measure one can rewrite
        \begin{align}
            (\Gamma_\eps)_\sharp\vrho_0(A)=\vrho_0\left(\Gamma_\eps^{-1}(A)\right)=\int_\X\Ie_A\left(\Gamma_\eps(x)\right)\dd\vrho_0(x)=\int_\X \delta_{\Gamma_\eps(x)}(A)\dd\vrho_0(x)
        \end{align}
        for any $A\in\BB_\X$.
        So, there is only one subgradient $\mu^*$ of $\tv_\eps(u)$ which is given by
		\begin{align}
			\mu^*(A)		&=\frac{\int_\X\delta_{\Gamma_\eps(x)}(A)\dd\vrho_0(x)-\vrho_0(A)}{\eps}+\frac{\vrho_1(A)-\int_\X\delta_{\gamma_\eps(x)}(A)\dd\vrho_1(x)}{\eps}\\
			&=\frac{(\Gamma_\eps)_\sharp \vrho_0(A)-\vrho_0(A)}{\eps}+\frac{\vrho_1(A)-(\gamma_\eps)_\sharp \vrho_1(A)}{\eps}=p(A),
            \qquad A\in\BB_\X.
		\end{align}
\end{example}
For the sake of completeness we prove that in the setting of \cref{ex:u cont-subdiff for singletons} the mappings $\Gamma_\eps$ and $\gamma_\eps$ indeed reduce to measurable functions on $\X$.
\begin{remark}[Measurability of $\Gamma_\eps$ and $\gamma_\eps$]\label{rmk:Gamma_eps measurable}
    We only show that $\Gamma_\eps$ is a measurable function since $\gamma_\eps$ follows analogously. Knowing that $\Gamma_\eps(x)$ is a singleton for every $x\in\X$, it is clear that the correspondence $\Gamma_\eps$ reduces to a function. To prove its measurability, define for any $x\in\X$ the argmax set, in the spirit of \cref{prop:u cont-measurable selector}, by
    \begin{align}
        A^x\coloneq \set{m\in\P(\clball{\eps}{x})}{\int_{\clball{\eps}{x}}u\dd m=\mathfrak{m}(x)}
    \end{align}
    where $\mathfrak{m}$ is the duality formulation of the maximum as defined in \eqref{eq:u cont-max problem}. By \cref{prop:u cont max dualisation} we have that $\mathfrak{m}(x)=\max_{\clball{\eps}{x}}u$ and thus
    \begin{align}
        A^x=\set{m\in\P(\clball{\eps}{x})}{\int_{\clball{\eps}{x}}u\dd m=u(\Gamma_\eps(x))}.
    \end{align}
    Using the same arguments as in \cref{ex:u cont-subdiff for singletons} we obtain that $A^x=\{\delta_{\Gamma_\eps(x)}\}$ is a singleton and by \cref{prop:u cont-measurable selector} there exists a measurable function $D\colon \X\to\P(\X)$ given by $D(x)=\delta_{\Gamma_\eps(x)}$. Finally, this implies that also $\Gamma_\eps$ is measurable.
    To see this, note that for any continuous function $u\in C(\X)$ we have
    \begin{align*}
        u(\Gamma_\eps(x)) = \int_\X u(y)\d\delta_{\Gamma_\eps(x)}(y).
    \end{align*}
    Since $\X$ is compact, the right hand side is the composition of the measurable function $D$ with a continuous function, namely the integral against $u$. 
    Hence, we get that for any $u\in C(\X)$ the map $x\mapsto u(\Gamma_\eps(x))$ is measurable.
    It remains to show that this implies measurability of $\Gamma_\eps$.
    Let $A\in\BB_\X$ be an open set and choose continuous function $u_n$ such that $u_n(x)\to 1_A(x)$ for all $x\in \X$.
    Applying this shows $u_n(\Gamma_\eps(x))\to 1_A(\Gamma_\eps(x))$ and, since the pointwise limit of measurable functions is measurable, the function $x\mapsto 1_A(\Gamma_\eps(x))$ is measurable.
    This is equivalent to $\Gamma_\eps^{-1}(A)$ being measurable and hence $\Gamma_\eps$ is measurable.    
\end{remark}

\section{Dualization for \texorpdfstring{$L^\infty(\Omega)$}{Linfty(Omega)}}\label{reformulation measurable funct}

In the previous section we showed that the adversarial total variation functional admits a duality formulation for continuous functions vanishing at infinity and we characterized the subdifferential on compact spaces. 
In this section, we pursue the same goal while dropping the assumption that $u$ is continuous. 
This naturally leads to a smaller set of admissible test functions and hence to a different characterization of the subdifferential.
Furthermore, it reintroduces the reference measure $\nu$ in \eqref{eq:TV functional} which was irrelevant in the previous section.

In \cref{u bounded-dualisation of esssup} we show that the essential supremum and infimum admit a dual representation in a very general setting. In \cref{u bounded-dual reformulation} we focus on the dual reformulation of the adversarial total variation for essentially bounded functions on Euclidean domains equipped with the Lebesgue measure as reference measure.
This setting, which was also adopted in \cite{bungert2024mean,bungert2024gamma}, simplifies certain technicalities while still covering the relevant scenario of data distributions at the population level. 
Next, in \cref{u bounded-nonlocal gradient} we obtain an integration-by-parts identity involving the previously defined nonlocal gradient and corresponding nonlocal divergence. Finally, we conclude with a limit characterization of the subdifferential of the adversarial total variation.
\subsection{Dualisation of Essential Supremum and Infimum}\label{u bounded-dualisation of esssup}
In the following, we show that the essential supremum (respectively infimum) can be dualized in a general measure space setting for essentially bounded functions. 

Let $(S,\Sigma,\nu)$ be a measure space. If for every $E\in\Sigma$ with $\nu(E)=\infty$ there exists $F\in\Sigma$ such that $F\subset E$ and $0<\nu(F)<\infty$, then $\nu$ is called \emph{semifinite}. 
Every $\sigma$-finite measure is semifinite. Further details can be found in \cite{folland1999}. Unless stated otherwise, we assume throughout that $(S,\Sigma,\nu)$ is a measure space with a semifinite measure $\nu$.
With $L^p(S)$ for $p\in[1,\infty]$ we denote the standard Lebesgue spaces with respect to the measure $\nu$.
First, similarly to the continuous case, we show that the essential supremum admits a dual representation. For $f\in L^\infty(S)$ the essential supremum and infimum are finite, and the test functions can be chosen to stem from the predual space $L^1(S)$.
\begin{lemma}\label{lem:dual representation of esssup and liminf}
    Let $f\in L^\infty(S)$ and define
    \begin{align}
        \G\coloneq\set{g\in L^1(S)}{g\geq 0\,\,\nu\mhyphen\text{a.e. on $S$ and }\Lpnorm{g}{1}{S}= 1}.
    \end{align}
    Then,
    \begin{align}
        \nu\mhyphen\esssup{f}{S}=\sup_{g\in\G} \int_S fg\dd\nu \quad\text{and}\quad \nu\mhyphen\essinf{f}{S}=\inf_{g\in\G}\int_S fg\dd\nu.
    \end{align}
\end{lemma}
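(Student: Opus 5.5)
The plan is to prove the statement for the essential supremum by establishing two inequalities, and then to deduce the infimum statement by applying the supremum statement to $-f$. Throughout, write $M\coloneq\nu\mhyphen\esssup{f}{S}$. This is finite because $f\in L^\infty(S)$. We may also assume $\nu(S)>0$: otherwise $\G$ is empty, and the statement has to be read with the usual conventions for empty suprema.

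For the inequality ``$\geq$'', take any $g\in\G$. By definition of the essential supremum, $f\leq M$ holds $\nu$-a.e. Since $g\geq0$ $\nu$-a.e., we get $fg\leq Mg$ $\nu$-a.e. Moreover $fg\in L^1(S)$, because $\abs{fg}\leq\norm{f}_\infty g$. Integrating and using $\int_S g\dd\nu=1$ gives $\int_S fg\dd\nu\leq M$. Taking the supremum over $g\in\G$ yields $\sup_{g\in\G}\int_S fg\dd\nu\leq M$.

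For the inequality ``$\leq$'', fix $\delta>0$ and set $E_\delta\coloneq\set{s\in S}{f(s)>M-\delta}$. By the definition of the essential supremum, $\nu(E_\delta)>0$.
\begin{itemize}
\item If $\nu(E_\delta)<\infty$, define $g_\delta\coloneq\nu(E_\delta)^{-1}\Ie_{E_\delta}$.
\item If $\nu(E_\delta)=\infty$, use semifiniteness to pick $F\subset E_\delta$ with $0<\nu(F)<\infty$, and define $g_\delta\coloneq\nu(F)^{-1}\Ie_F$.
\end{itemize}
In both cases $g_\delta\in\G$, and
\begin{align}
\int_S fg_\delta\dd\nu\geq M-\delta,
\end{align}
because $f>M-\delta$ on the support of $g_\delta$ and $g_\delta$ has total mass one. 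Letting $\delta\to0$ gives the reverse inequality.

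The only delicate step is the construction of $g_\delta$ when $\nu(E_\delta)=\infty$. This is exactly where the semifiniteness assumption enters: without it, a set of infinite measure might contain no subset of finite positive measure, and no normalized $L^1$ density concentrated on $E_\delta$ would exist.

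Finally, for the infimum, note that $\nu\mhyphen\essinf{f}{S}=-\nu\mhyphen\esssup{(-f)}{S}$. Applying the supremum formula to $-f\in L^\infty(S)$ gives
\begin{align}
\nu\mhyphen\essinf{f}{S}=-\sup_{g\in\G}\int_S(-f)g\dd\nu=\inf_{g\in\G}\int_S fg\dd\nu,
\end{align}
which completes the proof.
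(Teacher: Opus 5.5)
Your proposal is correct and follows essentially the same route as the paper: bound $\int_S fg\dd\nu$ by the essential supremum using $g\geq 0$ and $\int_S g\dd\nu=1$, then test with a normalized indicator of a subset of finite positive measure of a superlevel set $\{f>\nu\mhyphen\esssup{f}{S}-\delta\}$, obtained via semifiniteness. The only differences are cosmetic: you make the finite-measure case, the case $\nu(S)=0$, and the reduction of the infimum to the supremum via $-f$ explicit, where the paper handles these implicitly or by saying ``analogously''.
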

\begin{proof}
    We only prove the first equality since the second can be shown analogously.
    
    First, choose $g\in \G$ arbitrarily to obtain
    \begin{align}
        \int_S fg\dd\nu\leq  \nesssup{f}{S}
        \int_S g\dd\nu=\nesssup{f}{S}.
    \end{align}
    So, $\sup_{g\in\G}\int_S fg\dd\nu\leq \nesssup{f}{S}$. For the converse inequality let $\delta>0$ and define the set $A\coloneq \{f\geq \nesssup{f}{S}-\delta\}$. Then, by definition $\nu(A)>0$ and due to the semifiniteness of $\nu$ there is a subset $B\subset A$ such that $0<\nu(B)<\infty$. Next, define $g^*\coloneq \nu(B)^{-1}\Ie_B\in \G$ which yields
    \begin{align}
        \sup_{g\in\G}\int_S fg\dd\nu\geq \int_S fg^*\dd\nu=\mint_B f\dd\nu\geq \nesssup{f}{S}-\delta.
    \end{align}
    Sending $\delta$ to zero implies $\sup_{g\in\G}\int_S fg\dd\nu \geq \nu\mhyphen\esssup{f}{S}$.
\end{proof}
\begin{remark}
    The duality formulation derived in \cref{lem:dual representation of esssup and liminf} can be also shown for general measurable functions by replacing the potentially unbounded function $f$ with a suitable function $\overline{f}\in L^\infty$ in any nontrivial case, i.e., the left-hand side is finite.
\end{remark}
\begin{remark}[Adversarial total variation for unbounded functions]
    For generic non-zero data distribution measures $\vrho_0$ and $\vrho_1$ one can easily find $u\in L^1(S)\setminus L^\infty(S)$ such that $\nu\mhyphen\tv_\eps(u)=\infty$.
    Under certain technical assumptions on the supports of the measures (e.g., if they are not both fully supported), one can identify cases where $\nu\mhyphen\tv_\eps(u)<\infty$ despite $u\notin L^\infty(S)$. 
    To keep the exposition simple, we therefore restrict ourselves in the following to essentially bounded functions, which covers most relevant situations from a practical point of view.
\end{remark}
\subsection{Dual Reformulation for Essentially Bounded Functions}\label{u bounded-dual reformulation}
Based on the general duality formulation for the essential supremum and infimum, we now complete the reformulation of $\tv_\eps$ for essentially bounded functions defined on finite-dimensional spaces. The goal is to extract a measurable selector that allows us to define a single test function over which we can maximize (or minimize), instead of dealing with infinitely many individual test functions for each data point. To achieve this, we first restrict the class of test functions to \emph{continuous} $L^1$-functions, which later enables us to construct jointly measurable test functions.
In the following, we set $\X=\Ome\subset\IRN$ as a bounded domain equipped with the Euclidean distance.
For technical reasons we have to assume that $\Ome$ is convex.
Moreover, as reference measure $\nu$ we choose the $N$-dimensional Lebesgue measure on $\Ome$. 
Note that parts of the construction also apply to general reference measures, see \cref{rmk:measurability of test function}. 
We assume $\vrho_0,\vrho_1\ll \lambda^N$, identify these measures with their densities, and write $\int f\dd \vrho_0=\int f\vrho_0\dd x$ for notational convenience. 
In addition to that, to be consistent with commonly used notation but slightly inconsistent with \cref{reformulation cont funct}, we refer to the standard norm ball around $x\in\Ome$ by $\ball{\eps}{x}=\set{y\in\IR^N}{\abs{x-y}<\eps}$, and as before its closure is denoted by $\clball{\eps}{x}$.
Correspondingly, all occurrences of $\ball{\eps}{x}$ will be replaced by $\ball{\eps}{x}\cap\Ome$ in this section which equals the $\eps$-ball on the metric space $\Ome$ equipped with the Euclidean distance restricted to $\Ome$.
We obtain the following dual representation as the main result of this part where the set of test functions is given by
\begin{align}
        \PP\coloneq  \left\{ 
        \Psi\in L^1(\Ome\times\Ome) \, \middle| \, 
        \begin{aligned}
            &\Psi\geq 0\text{ a.e. on }\Ome\times\Ome,\quad\int_{\Ome} \Psi(x,y)\dd y= 1,\\ 
            &\esssupp \Psi(x,\cdot) \subset {\clball{\eps}{x}}\cap\overline{\Ome}\text{ for }x\in \Ome
        \end{aligned}
        \right\}.
\end{align}
For technical reasons we have to pose a convexity assumption on $\Ome$ for the proofs in this section to work.
 \begin{assumption}\label{ass:u bounded-feature space}
     The feature space $\Ome\subset\IRN$ is a bounded and convex domain.
 \end{assumption}
    \begin{theorem}[Dual representation]\label{thm:dual representation for bounded functions}
    Let $\Ome\subset\IRN$ satisfy \cref{ass:u bounded-feature space}, let $\mu\in\P(\Ome\times\{0,1\})$ be a probability measure, and assume that the respective conditional distributions $\vrho_i\coloneq\mu(\filll\times\{i\})$ for $i\in\{0,1\}$ are absolutely continuous with respect to the $N$-dimensional Lebesgue measure. 
    Then, for $u\in L^\infty(\Ome)$ we have 
    \begin{align}
        \int_\Ome \esssup{u}{\ball{\eps}{x}\cap\Ome}\,\vrho_0(x)\dd x=\sup_{\Psi\in\PP} \int_\Ome\int_\Ome \Psi(x,y)\vrho_0(x)u(y)\dd y\,\dd x
    \end{align}
    and
    \begin{align}
        \int_\Ome \essinf{u}{\ball{\eps}{x}\cap\Ome}\,\vrho_1(x)\dd x=\inf_{\Psi\in\PP} \int_\Ome\int_\Ome \Psi(x,y)\vrho_1(x)u(y)\dd y\,\dd x.
    \end{align}
\end{theorem}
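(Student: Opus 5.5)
Only the first identity needs proof; the second follows by applying the first to $-u$ with $\vrho_1$ in place of $\vrho_0$. Throughout, write $M(x)\coloneq \esssup{u}{\ball{\eps}{x}\cap\Ome}$. This function is measurable in $x$; in fact it is lower semicontinuous, since $x_n\to x$ eventually covers every compact subset of $\ball{\eps}{x}$.

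\emph{Upper bound.} Fix $\Psi\in\PP$. For a.e.\ $x$, the function $\Psi(x,\cdot)$ is a nonnegative $L^1$ density of mass one, essentially supported in $\clball{\eps}{x}\cap\overline{\Ome}$. The sphere $\partial\ball{\eps}{x}$ and $\partial\Ome$ are Lebesgue null ($\Ome$ is convex), so this support is effectively $\ball{\eps}{x}\cap\Ome$. Hence $\Psi(x,\cdot)\in\G$ for $S=\ball{\eps}{x}\cap\Ome$, and \cref{lem:dual representation of esssup and liminf} gives
\[
\int_\Ome \Psi(x,y)u(y)\dd y\le M(x).
\]
Multiplying by $\vrho_0(x)\ge0$ and integrating, with Fubini justified since $\Psi\in L^1$ and $u\in L^\infty$, yields the inequality ``$\ge$''.

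\emph{Lower bound.} Fix $\delta>0$. The goal is a $\Psi\in\PP$ such that $\int_\Ome\Psi(x,y)u(y)\dd y\ge M(x)-\delta$ for a.e.\ $x$; integrating against $\vrho_0$ and letting $\delta\to0$ then finishes the proof, since $\vrho_0$ is finite. Pointwise, \cref{lem:dual representation of esssup and liminf} gives normalized indicators $g_x=\abs{B}^{-1}\Ie_B$ with this property. The difficulty is choosing them so that $(x,y)\mapsto\Psi(x,y)$ is \emph{jointly} measurable and integrable. I expect this to be the main obstacle, and the paper's adapted measurable maximum theorem (\cref{adapted measurable max}) is meant for it. My concrete plan is to restrict the selection to a countable, explicitly parametrized family: normalized indicators of balls $\ball{r}{z}$ with rational centers $z$ and rational radii $r$.

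First I will check that this family suffices. The set $A=\{u> M(x)-\delta\}\cap\ball{\eps}{x}\cap\Ome$ has positive measure. By Lebesgue density there is a ball $\ball{r}{z}$ with $\ball{r}{z}\subset \ball{\eps}{x}\cap\Ome$ on which the average of $u$ is at least $M(x)-2\delta$. Shrinking slightly and using convexity of $\Ome$, as in \cref{ex:support shifting}, I can take $z,r$ rational and keep $\ball{r}{z}\subset\ball{\eps-\eta}{x}\cap\Ome$ for some $\eta>0$; by the same argument as in \cref{lem:u cont-weak measurable correspondence}, the condition then remains valid on a neighborhood of $x$.

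Next I enumerate the pairs $(z_k,r_k)$ and define $k(x)$ as the smallest $k$ such that $\ball{r_k}{z_k}\subset\ball{\eps}{x}\cap\Ome$ and $\mint_{\ball{r_k}{z_k}}u\ge M(x)-2\delta$. The containment condition is a closed condition in $x$, namely $\abs{x-z_k}\le \eps-r_k$. Together with the measurability of $M$, this makes each set $\{k(x)=k\}$ Borel. Then
\[
\Psi(x,y)\coloneq\sum_k \Ie_{\{k(x)=k\}}(x)\,\abs{\ball{r_k}{z_k}}^{-1}\Ie_{\ball{r_k}{z_k}}(y)
\]
is jointly Borel, nonnegative, has $y$-integral $1$ for every $x$, and has support in $\clball{\eps}{x}\cap\overline\Ome$. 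The remaining point is integrability: although the radii $r_{k(x)}$ are not bounded below uniformly, $\Psi\ge0$ and $\int_{\Ome\times\Ome}\Psi=\abs{\Ome}<\infty$, so $\Psi\in L^1(\Ome\times\Ome)$ and therefore $\Psi\in\PP$. Applying Fubini again to $\Psi\vrho_0 u$ completes the lower bound with $2\delta$ in place of $\delta$. If the direct countable selection runs into a measurability gap, I will fall back on the adapted measurable maximum theorem, applied to the correspondence $x\mapsto\G_x$ of continuous densities supported in $\ball{\eps}{x}\cap\Ome$.
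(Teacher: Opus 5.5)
Your proof is correct. For the lower bound it takes a genuinely different and considerably more elementary route than the paper.

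\textbf{The paper's route.} It first replaces the pointwise test sets by continuous densities $\PP^x_c$, by squeezing and mollifying; this is one of the places where convexity of $\Ome$ is used. It then shows that $x\mapsto\PP^x_c$ is weakly measurable in $C(\overline{\Ome})$. A $\delta$-optimal measurable selector $\Phi$ comes from its adapted measurable maximum theorem (\cref{thm:adapted measurable maximum}, built on Kuratowski--Ryll-Nardzewski). Joint measurability of $\Psi^*(x,y)=\Phi(x)(y)$ follows from the Carath\'eodory property. Finally, a second squeeze-and-mollify argument passes from $\PP_c$ back to $\PP$.

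\textbf{Your route.} You restrict to the countable family of normalized indicators of rational balls, which still suffices by the Lebesgue density theorem. Once the family is countable, the first admissible index is a Borel selector for free. Each admissibility set is the closed set $\{\abs{x-z_k}\le\eps-r_k\}$ intersected with the sublevel set $\{M\le c_k+2\delta\}$ of the lower semicontinuous function $M$, where $c_k\coloneq\mint_{\ball{r_k}{z_k}}u$. Joint measurability of $\Psi$ is then automatic, since $\Psi$ is a countable sum of products.

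\textbf{Comparison.} Your route bypasses the appendix entirely. It also never actually needs convexity, so it proves the statement for every bounded open $\Ome$. Your two appeals to convexity are superfluous: $\Psi(x,\cdot)$ lives on $\Ome$, so $\partial\Ome$ plays no role, and the shrinking step only uses that $\ball{\eps}{x}\cap\Ome$ is open. The countable-selection idea may also be a way around the joint-measurability obstruction mentioned in \cref{rmk:measurability of test function}. What the paper's route gives in addition is that the supremum is already approached by kernels continuous in $y$, i.e., over $\PP_c$; your kernels are discontinuous.

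\textbf{Small points.}
\begin{itemize}
\item The claim that the condition ``remains valid on a neighborhood of $x$'' holds only for the containment part. The averaging inequality can fail at nearby $x'$, because $M$ may jump upward there. You never use this claim, though: the first-index selection only needs existence at each $x$ together with Borel admissibility sets.
\item In the upper bound, integrability of $\Psi\vrho_0u$ should come from Tonelli and $\int_\Ome\Psi(x,y)\dd y=1$, which give the bound $\norm{u}_{L^\infty(\Ome)}\vrho_0(\Ome)$. The facts $\Psi\in L^1$ and $u\in L^\infty$ alone do not suffice, because $\vrho_0$ need not be bounded.
\item When you move the density ball to a rational center and radius, keep a little slack in the constant and use continuity of $(z,r)\mapsto\mint_{\ball{r}{z}}u$.
\end{itemize}
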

\begin{proof}
    This follows by combining \cref{cor:u bounded-dual representation of esssup and essinf,lem:u bounded-dual problem continuous test functions,lem:u bounded-phi to Psi,lem:u bounded-Psi L1 approximation} below.
\end{proof}
As a first step, we derive the dual formulation of the essential supremum and infimum by applying \cref{lem:dual representation of esssup and liminf}.
\begin{proposition}\label{cor:u bounded-dual representation of esssup and essinf}
    Let $\Ome\subset \IRN$ be a bounded domain and $u\in L^\infty(\Ome)$. Then, for all $x\in\Ome$ we have
    \begin{align}
        \esssup{u}{\ball{\eps}{x}\cap\Ome}=\sup_{\phi\in\PP^x}\int_\Ome u\phi\dd y\quad\text{and}\quad \essinf{u}{\ball{\eps}{x}\cap\Ome}=\inf_{\phi\in\PP^x}\int_\Ome u\phi\dd y
    \end{align}
    where the set of test functions is given as
    \begin{align}\label{eq:Px definition}
        \PP^x\coloneq \set{\phi\in L^1(\Ome)}{\phi\geq 0\text{ a.e. on }\Ome,\quad \esssupp{\phi}\subset{\clball{\eps}{x}}\cap\overline{\Ome},\quad \int_\Ome\phi\dd y=1}.
    \end{align}
\end{proposition}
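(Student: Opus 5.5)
The plan is to apply \cref{lem:dual representation of esssup and liminf} on a suitable measure space and then relate its test set to $\PP^x$. Fix $x\in\Ome$ and take $S\coloneq\ball{\eps}{x}\cap\Ome$ with the Borel $\sigma$-algebra and the restricted Lebesgue measure $\nu=\lambda^N\restr S$. This measure is finite, hence semifinite. It is also nonzero, because $\Ome$ is open and $x\in\Ome$. The restriction $u|_S$ lies in $L^\infty(S)$, so the lemma gives
\begin{align}
    \esssup{u}{\ball{\eps}{x}\cap\Ome}=\sup_{g\in\G_S}\int_S ug\dd y,
\end{align}
where $\G_S$ denotes the nonnegative $L^1(S)$-functions with unit integral. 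The same holds with $\inf$ in place of $\sup$.

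Next I would identify $\G_S$ with $\PP^x$ up to null sets. Given $g\in\G_S$, extend it by zero to a function $\phi$ on $\Ome$. Then $\phi\in L^1(\Ome)$, $\phi\geq0$, and $\int_\Ome\phi\dd y=1$. Moreover $\phi=0$ a.e. outside $S$, so $\esssupp\phi\subset\overline{S}\subset\clball{\eps}{x}\cap\overline{\Ome}$. Hence $\phi\in\PP^x$ and $\int_\Ome u\phi\dd y=\int_S ug\dd y$, which shows that the supremum over $\PP^x$ is at least the essential supremum.

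For the converse inequality, take $\phi\in\PP^x$. Since $\esssupp\phi\subset\clball{\eps}{x}\cap\overline{\Ome}$, the function $\phi$ vanishes a.e. on $\Ome\setminus\clball{\eps}{x}$. The sphere $\partial\ball{\eps}{x}$ is a Lebesgue null set, so $\phi$ vanishes a.e. outside $S$. Consequently $g\coloneq\phi|_S$ belongs to $\G_S$ and satisfies $\int_\Ome u\phi\dd y=\int_S ug\dd y$. This gives the inequality $\leq$ by the lemma. The infimum statement follows in the same way, or by applying the supremum statement to $-u$.

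The only delicate point is this null-set bookkeeping. The closed ball in the definition of $\PP^x$ differs from the open ball in the essential supremum, and $\overline{\Ome}$ differs from $\Ome$. Both discrepancies have Lebesgue measure zero, because the sphere has measure zero and the integration in $\PP^x$ is over $\Ome$ only. I also need the essential support to be defined as the complement of the largest open set on which $\phi=0$ a.e., so that vanishing a.e. outside $\clball{\eps}{x}$ is indeed equivalent to the support condition.
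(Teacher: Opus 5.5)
Your proposal is correct and follows the paper's proof: apply \cref{lem:dual representation of esssup and liminf} on $S=\ball{\eps}{x}\cap\Ome$ with Lebesgue measure, then extend the test functions by zero to $\Ome$. You also prove the reverse inequality explicitly, which the paper leaves implicit: since $\partial\ball{\eps}{x}$ is a Lebesgue null set, every $\phi\in\PP^x$ restricts to an admissible test function on $S$.
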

\begin{proof}
    Applying \cref{lem:dual representation of esssup and liminf} to $S=\Ome\cap\ball{\eps}{x}$ and $\nu=\lambda^N$ and afterwards extending the test functions by zero onto $\Ome$ leads to the result.
\end{proof}

 Next, we restrict the set of test functions to continuous ones, which later allows us to utilize that Carathéodory functions are jointly measurable test functions. 
 This is achieved by approximating almost maximizing functions via mollification, which is possible due to the specific choice of the reference measure. 
\begin{lemma}\label{lem:u bounded-dual problem continuous test functions}
    For $\Ome$ satisfying \cref{ass:u bounded-feature space} and $x\in \Ome$ we have
    \begin{align}
        \sup_{\phi\in\PP^x}\int_{\Ome} u \phi \dd y=\sup_{\phi\in\PP^x_c}\int_{\Ome} u \phi \dd y\quad\text{and}\quad\inf_{\phi\in\PP^x}\int_{\Ome} u \phi \dd y=\inf_{\phi\in\PP^x_c}\int_{\Ome} u \phi \dd y
    \end{align}
    where $\PP^x$ is defined as in \eqref{eq:Px definition} and
    \begin{align}\label{eq:PP^x_c defintion}
        \PP^x_c\coloneq \set{\phi\in C(\overline{\Ome})}{\phi\geq 0,\quad \supp \phi\subset {\clball{\eps}{x}}\cap\overline{\Ome}, \quad\int_\Ome \phi\dd y= 1}.
    \end{align}
\end{lemma}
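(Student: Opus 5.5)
We only treat the supremum; the infimum follows by applying it to $-u$. Since $\PP^x_c\subset\PP^x$ (a continuous function on $\overline\Ome$ is integrable, nonnegative, has essential support inside its support, and has unit mass), the inequality $\sup_{\PP^x_c}\le\sup_{\PP^x}$ is immediate. For the converse it suffices, given $\delta>0$, to find $\phi\in\PP^x_c$ with $\int_\Ome u\phi\dd y\ge \esssup{u}{\ball{\eps}{x}\cap\Ome}-\delta$. By \cref{cor:u bounded-dual representation of esssup and essinf} this is the target value.

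We will not mollify an arbitrary almost-maximizer. Instead we start from the explicit near-maximizer used in the proof of \cref{lem:dual representation of esssup and liminf}. Let $A\coloneq\set{y\in\ball{\eps}{x}\cap\Ome}{u(y)\ge \esssup{u}{\ball{\eps}{x}\cap\Ome}-\delta}$, which has positive Lebesgue measure. The obstacle is that mollifying $\Ie_A$ spreads mass outside $\clball{\eps}{x}\cap\overline\Ome$ and onto points where $u$ may be small.

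To control support, we first shrink. By inner regularity and continuity of measure, for small $r>0$ the set $A_r\coloneq A\cap \ball{\eps-r}{x}\cap\set{y}{\mathrm{dist}(y,\IRN\setminus\Ome)>r}$ still has positive measure. Mollifying $g\coloneq\Ie_{A_r}/\abs{A_r}$ with a kernel $\eta_\tau$, $\tau<r$, gives a continuous nonnegative $g_\tau$ with unit mass and support in $\clball{\eps}{x}\cap\overline\Ome$. Convexity of $\Ome$ is not even needed here; alternatively one could use $T_{x,r}$ from \cref{ex:support shifting} to push into the smaller ball.

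To control the value of $\int u g_\tau\dd y$, we use $g_\tau\to g$ in $L^1(\Ome)$ as $\tau\to0$ and $u\in L^\infty$. Then
\begin{align}
\int_\Ome u g_\tau\dd y\ge\int_\Ome ug\dd y-\norm{u}_\infty\norm{g_\tau-g}_{L^1}\ge \esssup{u}{\ball{\eps}{x}\cap\Ome}-\delta-\norm{u}_\infty\norm{g_\tau-g}_{L^1},
\end{align}
which exceeds $\esssup{u}{\ball{\eps}{x}\cap\Ome}-2\delta$ for small $\tau$. Letting $\delta\to0$ finishes the proof.

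The main point needing care is extending $g_\tau$ as a continuous function on $\overline\Ome$ with support in the closed ball intersected with $\overline\Ome$. This is automatic because $g_\tau$ is compactly supported inside $\Ome\cap\ball{\eps}{x}$.
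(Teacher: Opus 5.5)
Your proof is correct, and it takes a different route from the paper's.

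\textbf{The paper's route.} It starts from an \emph{arbitrary} $\delta$-almost maximizer $\hat\phi\in\PP^x$ and squeezes it toward $x$ via the dilation $y\mapsto\frac{n+1}{n}y-\frac{x}{n}$; this is where convexity of $\Ome$ enters. It then proves $\hat\phi_n\to\hat\phi$ in $L^1(\Ome)$ by a density and change-of-variables argument, mollifies, and passes to a double limit in $n$ and $m$.

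\textbf{Your route.} You start from the specific near-maximizer $\Ie_A/\abs{A}$ taken from the proof of \cref{lem:dual representation of esssup and liminf}. You shrink by \emph{truncating the set} $A$ to $A_r$ rather than rescaling the function. Since $u\geq \esssup{u}{\ball{\eps}{x}\cap\Ome}-\delta$ on all of $A$, the truncation costs nothing in the objective. The only error left is the mollification error, which a single $L^1$--$L^\infty$ estimate handles.

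\textbf{What your route buys.} It is shorter and avoids the two-parameter limit. The factor $\{\mathrm{dist}(\cdot,\IRN\setminus\Ome)>r\}$ keeps the mollified support a positive distance from $\partial\Ome$ explicitly. In the paper this needs $m$ large relative to $n$ and $\mathrm{dist}(x,\partial\Ome)$; the stated condition $m\geq n$ only controls the ball, and the gap is harmless there because $m\to\infty$. As you observe, your argument needs neither convexity nor boundedness of $\Ome$.

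\textbf{What the paper's route buys.} It proves a stronger density statement: every element of $\PP^x$ is an $L^1$-limit of elements of $\PP^x_c$. The same squeeze-and-mollify construction is reused for a general $\hat\Psi\in\PP$ in \cref{lem:u bounded-Psi L1 approximation}. There one approximates a given test function rather than a freely chosen indicator.
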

\begin{proof}
    We only show the first equality, since the second one follows analogously. First, for $\delta>0$ choose $\hat{\phi}\in \PP^x$ such that
    \begin{align}
        \int_\Ome \hat{\phi}(y) u(y) \dd y\geq \sup_{\phi\in \PP^x}\int_\Ome \phi(y) u(y) \dd y-\delta.
    \end{align}
    The overall idea is to first squeeze the support of the test function such that we can afterwards mollify it, see \cref{fig:squeeze and mollify} for a visualization. By assuming convexity of $\Ome$ the ``squeezing-operation'' is well-defined.
    For $n\in\IN$ we define the squeezed function by
    \begin{align}
        \hat{\phi}_n(y)\coloneq \left(\frac{n+1}{n}\right)^N\hat{\phi}\left(\left(\frac{n+1}{n}\right)y-\frac{x}{n}\right)
    \end{align}
    for $y\in \frac{n}{n+1}(\Ome+\frac{x}{n})\subset\Ome$ and zero otherwise.
    Since $\Ome$ is convex by \cref{ass:u bounded-feature space}, $\hat{\phi}_n$ is well-defined.
    Furthermore, for all $n\in\IN$ its support satisfies
    \begin{align}
        \esssupp(\hat{\phi}_n)\subset {\clball{\frac{n}{n+1}\eps}{x}}\cap\overline{\Ome}={\clball{\eps-\frac{1}{n+1}\eps}{x}}\cap\overline{\Ome}.
    \end{align}
    Furthermore, by definition we have $\hat{\phi}_n\in L^1(\Ome)$ and $\Lpnorm{\hat{\phi}_n}{1}{\Ome}=1$.
    Apart from that, the squeezed function also converges to the original function $\hat{\phi}$ in $L^1(\Ome)$ as $n$ goes to infinity.
    To see this, let $\mu>0$ and choose $f\in \Cci(\Ome)$ such that $\norm{\hat{\phi}-f}_{L^1(\Ome)}\leq \frac{\mu}{2}$,
    which is possible since $\Cci(\Ome)$ is densely contained in $L^1(\Ome)$. 
    For $y\in \Ome$ we then have
    \begin{align}
        &\left\vert\hat{\phi}_n(y)-\hat{\phi}(y)\right\vert
        \leq \left\vert{\left(\frac{n+1}{n}\right)^N\left[\hat{\phi}\left(\frac{n+1}{n}y-\frac{x}{n}\right)-f\left(\frac{n+1}{n}y-\frac{x}{n}\right)\right]}\right\vert\\&\quad+\left\vert{\left(\left(\frac{n+1}{n}\right)^N -1\right)f\left(\frac{n+1}{n}y-\frac{x}{n}\right)}\right\vert+\left\vert{f\left(\frac{n+1}{n}y-\frac{x}{n}\right)-f(y)}\right\vert+\abs{f(y)-\hat{\phi}(y)}.
    \end{align}
    Next, we perform the change of variables $w\coloneq a_n y-\frac{x}{n}$ where $a_n\coloneq \frac{n+1}{n}$, which is once again possible due to the convexity of $\Ome$, to obtain
    \begin{align}
        \int_\Ome&\left\vert\hat{\phi}_n(y)-\hat{\phi}(y)\right\vert\dd y\\&\leq \Lpnorm{\hat{\phi}-f}{1}{\Ome}+\int_\IRN \left\vert\frac{a_n^N-1}{a_n^N}f(w)\right\vert\dd w+\int_\Ome\left\vert{f\left(a_n y-\frac{x}{n}\right)-f(y)}\right\vert\dd y+\Lpnorm{\hat{\phi}-f}{1}{\Ome}.
    \end{align}
    Taking the limit $n\to\infty$, and using the continuity of $f$ as well as the fact that $a_n\to 1$ as $n\to\infty$, we obtain
    \begin{align}
        \lim_{n\to\infty}
        \int_\Ome\abs{\hat\phi_n(y)-\hat\phi(y)}\dd y  \leq \mu.
    \end{align}
    Since $\mu>0$ was arbitrary, we have proved the $L^1$-convergence of $\hat\phi_n$ to $\hat\phi$.
    
    The second part of the construction is the mollification of the squeezed function. For that we fix $n\in \IN$ and apply a standard mollifier
    \begin{align}
        \eta_m(x)\coloneq \left(\frac{\eps}{m+2}\right)^{-N}\eta\left(\frac{x}{\frac{\eps}{m+2}}\right)
    \end{align}
    where $\eta\in\Cci(\IRN)$ is such that $\int_\IRN \eta\dd x=1$, $\eta\geq 0$, and $\supp\eta\subset \ball{1}{0}$. We extend $\hat{\phi}_n\in L^1(\Ome)$ by zero to define the measurable function
    \begin{align}
        \phi^*_{n,m}(y)\coloneq \big(\hat{\phi}_n*\eta_m\big)(y)= \int_\IRN \hat{\phi}_n(z)\eta_m(y-z)\dd z
    \end{align}
    for all $y\in \Ome$. By standard results for mollifiers we obtain that $\phi^*_{n,m}\in L^1(\Ome)$, $\phi^*_{n,m}$ is continuous on $\overline{\Ome}$, and $\phi^*_{n,m}\to \hat{\phi}_n$ in $L^1(\Ome)$ for $m\to \infty$. Moreover, by construction $\phi^*_{n,m}$ is nonnegative on $\Ome$ and $\Lpnorm{\hat{\phi}^*_{n,m}}{1}{\Ome}=1$.
    Lastly, its support is contained in the following Minkowski sum
    \begin{align}
        \supp(\phi^*_{n,m})
        &=\esssupp(\hat{\phi}_n*\eta_m)\subset \overline{\esssupp(\hat{\phi}_n)+\esssupp(\eta_m)}\\&\subset{\clball{\eps(1-\frac{1}{n+1}+\frac{1}{m+2})}{x}}\subset{\clball{\eps}{x}}
    \end{align}
    if $m\geq n$. In particular, for $m\geq n$ we have that $\phi_{n,m}^*\in \PP^x_c$. For the last part we apply the $L^1$-convergences of $\phi^*_{n,m}\to\hat{\phi}_n$ for $m\to\infty$ and $\hat{\phi}_n\to\hat{\phi}$ for $n\to\infty$ to obtain
    \begin{align}
        \sup_{\phi\in \PP^x_c}\int_\Ome \phi(y) u(y) \dd y &\geq \lim_{n\to\infty}\lim_{n\leq m\to\infty}\int_\Ome \phi^*_{n,m}(y) u(y)\dd y
        =\lim_{n\to\infty}\int_\Ome \hat{\phi}_n(y) u(y)\dd y\\&=\int_\Ome \hat{\phi}(y) u(y)\dd y\geq\sup_{\phi\in\PP^x}\int_\Ome {\phi}(x,y) u(y)\dd y-\delta.
    \end{align}
    So, by sending $\delta$ to zero we obtain the first inequality
    \begin{align}
        \sup_{\phi\in\PP^x} \int_\Ome \phi(y)u(y)\dd y\leq\sup_{\phi\in\PP^x_c} \int_\Ome \phi(y)u(y)\dd y
    \end{align}
    and since the other inequality is trivial the proof is completed.
\end{proof}
\begin{figure}[h]
    \centering
    \begin{tikzpicture}[scale=1.55]
        \draw[->, gray] (-2.5, 0) -- (2.5, 0) node[right] {$\IRN$};
        \draw[->, gray] (-1, -0.5) -- (-1, 2) node[above] {$\IR$};
        \draw[magenta] (0, 0.1) -- (0, -0.1) node[below] {$x$};
        \draw[magenta] (-2.1, 0.1) -- (-2.1, -0.1) node[below] {$x-\eps$};
        \draw[magenta] (2.1, 0.1) -- (2.1, -0.1) node[below] {$x+\eps$};
        
        \draw[domain=-2.0:2.0, smooth, variable=\x, bordercolor] plot ({\x}, {1});
        \draw[domain=-2.5:-2.0, smooth, variable=\x, bordercolor]  plot ({\x}, {0});
        \draw[domain=2.0:2.5, smooth, variable=\x, bordercolor]  plot ({\x}, {0});
        \draw[dashed, bordercolor] (-2,0) -- (-2,1);
        \draw[dashed, bordercolor] (2,0) -- (2,1) node[right] {$\hat{\phi}$};
        
        \draw[domain=-1.5:1.5, smooth, variable=\x, orange] plot ({\x}, {1.5});
        \draw[domain=-2.5:-1.5, smooth, variable=\x, orange]  plot ({\x}, {0});
        \draw[domain=1.5:2.5, smooth, variable=\x, orange]  plot ({\x}, {0});
        \draw[dashed, orange] (-1.5,0) -- (-1.5,1.5);
        \draw[dashed, orange] (1.5,0) -- (1.5,1.5) node[above right] {$\hat{\phi}_n$};
        
        \draw[domain=-2.5:-0.5, smooth, variable=\x, green!40!black] plot ({\x}, {0.75*tanh(7*(\x+1.5))+0.75});
        \draw[domain=-0.5:0.5, smooth, variable=\x, green!40!black]  plot ({\x}, {1.5});
        \draw[domain=0.5:2.5, smooth, variable=\x, green!40!black] plot ({\x}, {-0.75*tanh(7*(\x-1.5))+0.75});
        \node[green!40!black, above] at (0,1.5) {$\phi^*_{n,m}$};
        
    \end{tikzpicture}
    \caption{Schematic visualization of the construction by squeezing and mollifying.}
    \label{fig:squeeze and mollify}
\end{figure}
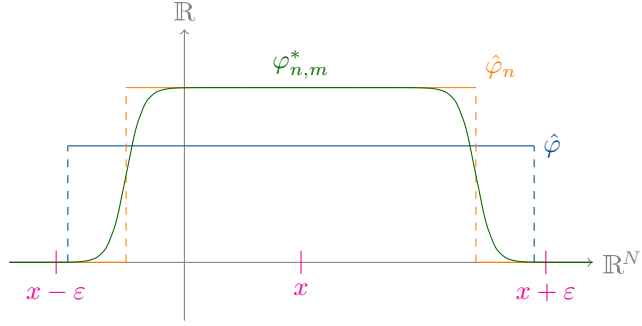
We now show that the underlying correspondence is weakly measurable. Compared to \cref{ex:support shifting}, a more delicate construction is required to shift the support of the test function while preserving continuity. In particular, simply moving the mass to the center would introduce discontinuities if $\phi$ were nonzero on $\partial\Ome$. Therefore, we introduce a continuous function that cuts off $\phi$ only near $\partial\ball{\eps}{x}\cap\overline{\Ome}$, where it is already small.
\begin{lemma}\label{lem:u bounded-weak measurability of psi}
    Let $\Ome$ satisfy \cref{ass:u bounded-feature space} and define the correspondence
    \begin{align}\label{eq:psi definition Lebesgue case}
        \psi\colon \Ome\corr C(\overline{\Ome}),\quad
        \psi(x)=\PP^x_c.
    \end{align}
    Then, $\psi$ is a weakly measurable correspondence.
\end{lemma}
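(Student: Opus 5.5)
Following the proof of \cref{lem:u cont-weak measurability of psi}, I would show that for every open $G\subset C(\overline{\Ome})$ (sup-norm topology) the lower inverse $\psi^\ell(G)=\set{x\in\Ome}{\PP^x_c\cap G\neq\emptyset}$ is open in $\Ome$, hence Borel. So fix $x\in\psi^\ell(G)$ and $\phi\in\PP^x_c\cap G$, and choose $\delta>0$ with $\set{f\in C(\overline{\Ome})}{\norm{f-\phi}_\infty<\delta}\subset G$. The goal is to find $r>0$ and, for every $z\in\ball{r}{x}\cap\Ome$, some $\phi_z\in\PP^z_c$ with $\norm{\phi_z-\phi}_\infty<\delta$.

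The construction has two stages. First, pull the support of $\phi$ strictly inside $\clball{\eps}{x}$ without using a squeezing map, which could create discontinuities where $\phi\neq0$ on $\partial\Ome$. Take a continuous cutoff $\chi_s\colon\overline{\Ome}\to[0,1]$ with $\chi_s\equiv1$ on $\clball{\eps-2s}{x}$ and $\chi_s\equiv 0$ outside $\ball{\eps-s}{x}$, for example $\chi_s(y)=\min\{1,\max\{0,(\eps-s-\abs{y-x})/s\}\}$. Set $\tilde\phi_s\coloneq\chi_s\phi/\int_\Ome\chi_s\phi\dd y$.

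Because $\phi$ is continuous on the compact set $\overline{\Ome}$ and vanishes on $\partial\ball{\eps}{x}\cap\overline{\Ome}$, it is uniformly small on the annulus $\set{y\in\overline\Ome}{\eps-2s\le\abs{y-x}\le\eps}$ as $s\to0$. Hence $\norm{\chi_s\phi-\phi}_\infty\to0$. Also $\int\chi_s\phi\to1$, by dominated convergence and since $\Ome$ is bounded. Together these give $\norm{\tilde\phi_s-\phi}_\infty\to0$, so I fix $s$ with $\norm{\tilde\phi_s-\phi}_\infty<\delta$. Now $\tilde\phi_s$ is continuous, nonnegative, has unit integral, and satisfies $\supp\tilde\phi_s\subset\clball{\eps-s}{x}\cap\overline\Ome$.

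Second, for $\abs{z-x}<r\le s$ we have $\clball{\eps-s}{x}\subset\clball{\eps}{z}$. So $\tilde\phi_s\in\PP^z_c$ directly, and no translation is needed: the function itself works for all nearby $z$. Therefore $\ball{s}{x}\cap\Ome\subset\psi^\ell(G)$, which shows the lower inverse is open.

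The main obstacle is the uniform-smallness step near the boundary sphere. It relies on $\phi$ vanishing on $\partial\ball{\eps}{x}\cap\overline\Ome$. This holds because $\supp\phi\subset\clball{\eps}{x}$ is the closed support and $\phi$ is continuous on $\overline\Ome$: points outside $\clball{\eps}{x}$ accumulate at the sphere, so $\phi=0$ there by continuity. A subtlety arises if $\Ome\setminus\clball{\eps}{x}$ is empty near some boundary point. In that case I argue instead that $\Ome\subset\clball{\eps}{x}$ locally, so no cutoff is required there. More precisely, I would let $\chi_s$ cut off only near the part of the sphere that is relevant, or alternatively note that $\phi$ is uniformly continuous and vanishes wherever $\overline{\Ome}$ meets the sphere in the closure of $\Ome\setminus\clball{\eps}{x}$.

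If this causes trouble, my fallback is to combine the cutoff with a slight restriction, replacing $\phi$ by $\max\{\phi-\eta,0\}$ renormalized, for small $\eta>0$. Its support lies in $\{\phi\ge\eta\}$, a compact subset of the open set $\{\phi>0\}$. I would then try to argue this set lies at positive distance from the boundary of $\ball{\eps}{x}$, checking the boundary case $\abs{y-x}=\eps$ via continuity as above. The renormalized function is within $O(\eta)$ of $\phi$ in sup norm.
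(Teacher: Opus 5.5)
\textbf{Verdict: genuine gap, and the paper's proof has the same gap.} Your construction is the same as the paper's. You cut $\phi$ off near $\partial\ball{\eps}{x}$, renormalize, and observe that the support then lies in $\clball{\eps-s}{x}\subset\clball{\eps}{z}$ for all $z$ near $x$. The paper does this and simply asserts that $\phi$ vanishes on $\partial\ball{\eps}{x}\cap\overline{\Ome}$.

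The subtlety you flag is not a technicality. That assertion is false, and with it the claim that $\psi^\ell(G)$ is open. Take $N=1$, $\Ome=(0,1)$, $\eps=\tfrac12$.
\begin{itemize}
\item Since $\clball{1/2}{1/2}\cap\overline{\Ome}=[0,1]$, the constant $\phi\equiv 1$ lies in $\PP^{1/2}_c$, yet it does not vanish on $\partial\ball{1/2}{1/2}\cap\overline{\Ome}=\{0,1\}$.
\item For every $z\neq\tfrac12$ the ball $\clball{1/2}{z}$ misses $0$ or $1$. So every element of $\PP^z_c$ vanishes at $0$ or at $1$, and lies at sup-distance at least $1$ from $\phi$.
\item With $G=\{f:\norm{f-1}_\infty<\tfrac12\}$ this gives $\psi^\ell(G)=\{\tfrac12\}$, which is not open.
\end{itemize}
The same happens for $\Ome=\ball{\eps}{x_0}$ in any dimension: the antipodal boundary point $x_0-\eps\frac{z-x_0}{\abs{z-x_0}}$ lies outside $\clball{\eps}{z}$.

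So neither of your patches can work, and no construction can. Cutting off only near the part of the sphere approached by $\Ome\setminus\clball{\eps}{x}$ leaves the support touching $\{0,1\}$. Renormalizing $\max\{\phi-\eta,0\}$ just returns $\phi\equiv 1$.

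What your argument does prove is that the smaller correspondence
\[
\psi_{<}(x)\coloneq\{\phi\in\PP^x_c:\supp\phi\subset\clball{r}{x}\text{ for some }r<\eps\}
\]
has open lower inverses. If $\supp\phi\subset\clball{r}{x}$, then $\phi\in\psi_{<}(z)$ whenever $\abs{z-x}<\eps-r$. This is your second stage, and no cutoff is needed.

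This suffices to rescue the paper's use of the lemma: work with the sup-norm closure $\overline{\psi_{<}}$ instead of $\psi$.
\begin{itemize}
\item $\overline{\psi_{<}}$ is weakly measurable, because a correspondence and its closure have the same lower inverses of open sets.
\item Its values are nonempty, closed, and contained in the closed set $\PP^x_c$.
\item The squeezed-and-mollified functions $\phi^*_{n,m}$ from \cref{lem:u bounded-dual problem continuous test functions} are supported in balls of radius strictly less than $\eps$. Hence the supremum of $\int u\phi$ over $\overline{\psi_{<}(x)}$ is still $\esssup{u}{\ball{\eps}{x}\cap\Ome}$.
\end{itemize}
With that, the selector argument goes through unchanged.

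If you want the lemma for $\psi$ itself, you need a genuinely different argument that $\psi^\ell(G)$ is Borel. In the example above it is Borel, but openness of lower inverses cannot be the route.
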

\begin{proof}
    Let $G\subset C(\overline{\Ome})$ be open. We show that
    \begin{align}
        \psi^\ell(G)=\set{x\in\Ome}{G\cap\psi(x)\neq\emptyset}=\set{x\in\Ome}{\exists \phi\in G\text{ s.t. }\phi\in \PP^x_c}
    \end{align}
    is open and thus a Borel set. 
    Let $x\in\psi^\ell(G)$ be fixed and denote the corresponding function by $\phi_x\in G\cap\psi(x)$.
    For fixed $\delta>0$, there exists a $0<r_\delta<\eps$ depending on $x$ such that $\phi_x(y)\leq \delta$ for all $y\in({\clball{\eps}{x}\cap\Ome})\setminus \ball{\eps-r_\delta}{x}$ as $\phi_x$ is zero on  $\partial\ball{\eps}{x}\cap\overline{\Ome}$.
    Note that $r_\delta\to 0$ for $\delta \to 0$ as $\phi_x$ is uniformly continuous which allows us to choose a $\delta>0$ such that $\ball{r_\delta}{x}\subset\Ome$ since $\Ome$ is open.
    Applying Urysohn's Lemma, we define a continuous cutoff function $\zeta\in C(\overline{\Ome};[0,1])$ such that $\zeta=0$ on $\overline{\Ome}\setminus \ball{\eps-\frac{r_\delta}{2}}{x}$ and $\zeta=1$ on ${\clball{\eps-r_\delta}{x}}\cap\overline{\Ome}$.
    Furthermore, we define
    \begin{align}
        \phi_C\coloneq Z\cdot\zeta\cdot\phi_x\qquad\text{where }Z^{-1}\coloneq \int_\Ome\zeta\phi_x\dd y.
    \end{align}
    By construction, $\phi_C$ is a nonnegative continuous function and $\Lpnorm{\phi_C}{1}{\Ome}=1$. Furthermore, $\supp \phi_C\subset{\clball{\eps-\frac{r_\delta}{2}}{x}}\cap\overline{\Ome}$, which, thanks to the convexity of $\Ome$ from \cref{ass:u bounded-feature space}, implies that $\supp\phi_C\subset{\clball{\eps}{y}}\cap\overline{\Ome}$ for all $y\in\ball{\frac{r_\delta}{2}}{x}$. Hence, $\phi_C\in\PP^y_c$ for all $y\in\ball{\frac{r_\delta}{2}}{x}$.\\
    Next, we show that $\phi_C\in G$ for $\delta>0$ small enough. Since $G$ is an open set, there exists $\vrho>0$ such that any $\phi\in C(\overline{\Ome})$ satisfying $\norm{\phi_x-\phi}_\infty\leq \vrho$ is also an element of $G$. 
    By inserting the corresponding estimates we find that for $y\in \overline{\Ome}$ we have
    \begin{align}
        \abs{\phi_x(y)-\phi_C(y)}&\leq \abs{\phi_x(y)}\abs{1-Z\zeta(y)}\\&\leq\begin{cases}\delta&\text{for }y\in\overline{\Ome}\setminus \ball{\eps-\frac{r_\delta}{2}}{x},\\ \delta\cdot\max\{1,\abs{1-Z}\}&\text{for }y\in{\clball{\eps-\frac{r_\delta}{2}}{x}\cap\overline{\Ome}}\setminus\ball{\eps-r_\delta}{x},\\\norm{\phi_x}_\infty\cdot\abs{1-Z}&\text{for }y\in{\clball{\eps-r_\delta}{x}\cap\overline{\Ome}}.
        \end{cases}
    \end{align}
    Note that the first and the third estimate follow directly from the properties of $\zeta$. 
    For the second one we note that $Z\geq 1$ and by making a simple case distinction on $Z\leq 2$ and $Z>2$ and using $\zeta\in[0,1]$ we arrive at the second estimate.    
    In addition to that
    \begin{align}
        1\geq \int_\Ome\zeta\phi_x\dd y\geq \int_{\ball{\eps-r_\delta}{x}} \phi_x\dd y \geq 1-\delta\abs{\ball{\eps}{x}\setminus\ball{\eps-r_\delta}{x}}
    \end{align}
    implying that $Z\to 1$ for $\delta\to 0$.
    So, $\abs{\phi_x(y)-\phi_C(y)}$ is controlled by $\delta$ independent of $y\in\overline{\Ome}$ which implies that $\norm{\phi_x-\phi_C}_\infty\leq \vrho$ for sufficiently small $\delta$.
    Lastly, if necessary we choose $\delta$ small enough such that $\ball{\frac{r_\delta}{2}}{x}\subset\Ome$ to finally obtain $\phi_C\in G\cap\psi(y)$ for all $y\in\ball{\frac{r_\delta}{2}}{x}$ which proves that $\psi^\ell(G)$ is open and thus Borel. So $\psi$ is indeed a weakly measurable correspondence.
\end{proof}
Having established weak measurability of the correspondence $\psi$, we aim to extract a measurable selector as in the previous section. However, since $\psi$ is not compact-valued, the classical measurable maximum theorem is not applicable. Instead, we employ an adaptation for closed-valued correspondences, namely \cref{thm:adapted measurable maximum} in the appendix, which still yields a measurable selector. 
\begin{proposition}\label{prop:u bounded-measurability of dual representation}
    Let $\Ome$ satisfy \cref{ass:u bounded-feature space} and let $u\in L^\infty(\Ome)$. Define $\mathfrak{s}\colon \Ome\to\IR$ by
    \begin{align}
        \mathfrak{s}(x)\coloneq\sup_{\phi\in\PP^x_c}\int_{\Ome} u(y) \phi(y)\dd y
    \end{align}
    where $\PP^x_c$ is defined as in \eqref{eq:PP^x_c defintion}.
    Furthermore, for $\delta>0$ and $x\in \Ome$ define the set
    \begin{align}
        S^x_\delta\coloneq\set{\phi\in\PP^x_c}{\int_{\Ome} u(y) \phi(y)\dd y\geq \mathfrak{s}(x)-\delta}.
    \end{align}
    Then,
    \begin{enumerate}[label=(\roman*)]
        \item\label{prop:u bounded-measurability of dual representation:i} $\mathfrak{s}$ is measurable,
        \item\label{prop:u bounded-measurability of dual representation:ii} $S^x_\delta$ is nonempty and closed with respect to the supremum norm topology for any $x\in \Ome$, and 
        \item\label{prop:u bounded-measurability of dual representation:iii} there exists a measurable selector function $\Psi\colon \Ome\to C(\overline{\Ome})$, meaning that $\Psi(x)\in S^x_\delta$ for any $x\in \Ome$ and $\Psi$ is measurable, i.e., for any open subset $G\subset C(\overline{\Ome})$ the preimage $\Psi^{-1}(G)$ lies in the Borel $\sigma$-algebra on $\Ome$.
    \end{enumerate}
\end{proposition}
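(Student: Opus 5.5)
The plan is to apply the adapted measurable maximum theorem, \cref{thm:adapted measurable maximum} in the appendix, to the correspondence $\psi(x)=\PP^x_c$ from \cref{lem:u bounded-weak measurability of psi} and the objective
\begin{align}
    f\colon \Gr\psi\to\IR,\qquad f(x,\phi)\coloneq\int_\Ome u(y)\phi(y)\dd y .
\end{align}
First, I will check the structural hypotheses. The space $C(\overline{\Ome})$ with the supremum norm is a separable Banach space, hence Polish, because $\overline{\Ome}$ is compact. By \cref{lem:u bounded-weak measurability of psi}, $\psi$ is weakly measurable. Its values are nonempty: for instance a normalized continuous bump supported in $\clball{\eps}{x}\cap\overline{\Ome}$ works, since $x\in\Ome$ and $\Ome$ is open. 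Its values are also closed, because uniform limits preserve nonnegativity and the support condition, and on the bounded set $\Ome$ the integral $\int_\Ome\phi\dd y$ is continuous under uniform convergence.

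Next, $f$ is Carathéodory. It does not depend on $x$, so it is trivially measurable in $x$. In $\phi$ it is continuous, indeed Lipschitz, since
\begin{align}
    \abs{f(x,\phi)-f(x,\phi')}\leq \norm{u}_{L^\infty}\abs{\Ome}\,\norm{\phi-\phi'}_\infty .
\end{align}
Moreover $\abs{f}\leq\norm{u}_{L^\infty}$ on $\Gr\psi$, so $\mathfrak{s}(x)$ is finite.

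With these facts, item \ref{prop:u bounded-measurability of dual representation:i} should follow from the adapted theorem. If I need a direct argument, I would take a countable dense subset $\{\phi_k\}$ of $C(\overline{\Ome})$ and use weak measurability to write
\begin{align}
    \{\mathfrak{s}>c\}=\bigcup_k\bigcup_{q}\psi^\ell\bigl(\ball{1/q}{\phi_k}\bigr)\cap\{\dots\},
\end{align}
exploiting the uniform Lipschitz continuity of $f$ in $\phi$. Equivalently, $\mathfrak{s}$ is a supremum of countably many measurable functions along a Castaing representation of $\psi$ (Kuratowski--Ryll-Nardzewski). For item \ref{prop:u bounded-measurability of dual representation:ii}, nonemptiness of $S^x_\delta$ follows from the definition of the supremum, since $\delta>0$ and $\mathfrak{s}(x)$ is finite. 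Closedness holds because $S^x_\delta$ is the intersection of the closed set $\PP^x_c$ with the superlevel set of the continuous map $\phi\mapsto f(x,\phi)$.

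The main obstacle is item \ref{prop:u bounded-measurability of dual representation:iii}. Since the maximum need not be attained and $\PP^x_c$ is not compact, we must select from the $\delta$-argmax correspondence $x\corr S^x_\delta$ rather than from an argmax. I will first show this correspondence is weakly measurable. Since the interior-type set $\{\phi\in\psi(x): f(x,\phi)>\mathfrak{s}(x)-\delta\}$ is relatively open in $\psi(x)$, one gets
\begin{align}
    \{x: S^x_\delta\cap G\neq\emptyset\}\supseteq\bigcup_k\{x:\xi_k(x)\in G,\ f(x,\xi_k(x))>\mathfrak{s}(x)-\delta\},
\end{align}
where $(\xi_k)$ is a Castaing representation of $\psi$. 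I would handle this with a strict-inequality version, or by using $\delta/2$ slack. Each set on the right is measurable by item \ref{prop:u bounded-measurability of dual representation:i} and the measurability of the $\xi_k$. Its closure in each fiber recovers the needed selection. The Kuratowski--Ryll-Nardzewski theorem, applied to the closed-valued correspondence in the Polish space $C(\overline{\Ome})$, then yields a measurable selector $\Psi$ with $\Psi(x)\in S^x_\delta$; this is precisely the content of the appendix theorem.
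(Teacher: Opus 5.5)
Your proposal is correct and matches the paper's proof: check that $C(\overline{\Ome})$ is Polish, that $\psi(x)=\PP^x_c$ is weakly measurable (by \cref{lem:u bounded-weak measurability of psi}) with nonempty closed values, and that $f(x,\phi)=\int_\Ome u\phi\dd y$ is Carathéodory, then invoke \cref{thm:adapted measurable maximum}. Your Castaing-representation argument for (i) and your strict-inequality-plus-closure device for (iii) are what the appendix does. The only cosmetic difference is that you obtain finiteness of $\mathfrak{s}$ directly from $\abs{f}\leq\norm{u}_{L^\infty}$, while the paper gets it from \cref{cor:u bounded-dual representation of esssup and essinf}.
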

\begin{proof}
    Firstly, by \cref{cor:u bounded-dual representation of esssup and essinf} $\mathfrak{s}(x)=\esssup{u}{\ball{\eps}{x}}<\infty$ for all $x\in\Ome$ as $u\in L^\infty(\Ome)$. By \cref{lem:u bounded-weak measurability of psi} the correspondence $\psi\colon\Ome\corr\C(\overline{\Ome})$ is weakly measurable. Next, for fixed $x\in \Ome$ the set $\PP^x_c$ is nonempty and closed with respect to uniform convergence. Similarly, the maximized function $f\colon \Gr \psi\to\IR, f(x,\phi)=\int_\Ome u\phi\dd y$ is a Carathéodory function since continuity in the second argument is equivalent to weak convergence and $f$ is trivially measurable in the first argument. Lastly, $C(\overline{\Ome})$ is Polish, so we can apply \cref{thm:adapted measurable maximum} to obtain the result.
\end{proof}
The symmetry between supremum and infimum directly gives the analogous result.
\begin{corollary}\label{cor:u bounded-measurability of dual representation}
    Let $\Ome$ satisfy \cref{ass:u bounded-feature space} and let $u\in L^\infty(\Ome)$.
    Define $\mathfrak{i}\colon \Omega\to\IR$ by
    \begin{align}
        \mathfrak{i}(x)\coloneq\inf_{\phi\in\PP^x_c}\int_{\Ome} u(y) \phi(y)\dd y
    \end{align}
    where $\PP^x_c$ is defined as in \eqref{eq:PP^x_c defintion}
    Furthermore, for $\delta>0$ and $x\in \Ome$ define the set
    \begin{align}
        I^x_\delta\coloneq\set{\phi\in\PP^x_c}{\int_{\Ome} u(y) \phi(y)\dd y\leq \mathfrak{i}(x)+\delta}.
    \end{align}
    Then,
    \begin{enumerate}[label=(\roman*)]
        \item $\mathfrak{i}$ is measurable,
        \item $I^x_\delta$ is nonempty and closed with respect to the supremum norm topology for any $x\in \Ome$, and 
        \item there exists a measurable selector function $\Psi\colon \Ome\to C(\overline{\Ome})$, meaning that $\Psi(x)\in I^x_\delta$ for any $x\in \Ome$ and $\Psi$ is $\BB_\Ome$-measurable.
    \end{enumerate}
\end{corollary}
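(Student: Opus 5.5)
The plan is to reduce the corollary to \cref{prop:u bounded-measurability of dual representation} by replacing $u$ with $-u$, rather than repeating the measurable-maximum argument. Since $u\in L^\infty(\Ome)$ if and only if $-u\in L^\infty(\Ome)$, the proposition applies to $\tilde u\coloneq -u$. It yields a measurable function $\tilde{\mathfrak{s}}(x)=\sup_{\phi\in\PP^x_c}\int_\Ome (-u)\phi\dd y$, nonempty closed sets
\begin{align}
    \tilde S^x_\delta\coloneq\set{\phi\in\PP^x_c}{\int_\Ome (-u)\phi\dd y\geq \tilde{\mathfrak{s}}(x)-\delta},
\end{align}
and a measurable selector $\tilde\Psi$ with $\tilde\Psi(x)\in\tilde S^x_\delta$ for all $x\in\Ome$.

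The next step is to identify the objects. Because $\sup_{\phi}(-a_\phi)=-\inf_\phi a_\phi$, we get $\tilde{\mathfrak{s}}=-\mathfrak{i}$. This gives (i), since the negative of a measurable function is measurable. It also shows that $\mathfrak{i}$ is finite, by \cref{cor:u bounded-dual representation of esssup and essinf} and \cref{lem:u bounded-dual problem continuous test functions}, which give $\mathfrak{i}(x)=\essinf{u}{\ball{\eps}{x}\cap\Ome}$.

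For (ii), multiplying the defining inequality by $-1$ shows $\int_\Ome(-u)\phi\dd y\geq -\mathfrak{i}(x)-\delta$ if and only if $\int_\Ome u\phi\dd y\leq\mathfrak{i}(x)+\delta$. Hence $\tilde S^x_\delta=I^x_\delta$ as subsets of $\PP^x_c$, and nonemptiness and closedness in the supremum norm carry over verbatim. Closedness can also be checked directly: uniform limits on the bounded set $\overline\Ome$ preserve nonnegativity, the support condition and the unit mass, and $\phi\mapsto\int u\phi$ is continuous for the sup norm since $u\in L^\infty\subset L^1(\Ome)$.

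For (iii), we take $\Psi\coloneq\tilde\Psi$. It is Borel measurable by the proposition, and $\Psi(x)\in\tilde S^x_\delta=I^x_\delta$ for every $x$. There is no real obstacle here. The only point to watch is that the hypotheses of \cref{thm:adapted measurable maximum} used in the proposition's proof, namely weak measurability of $\psi$ from \cref{lem:u bounded-weak measurability of psi} and the Carathéodory property of $f$, do not depend on the sign of $u$. They depend only on $u\in L^\infty(\Ome)$, so invoking the proposition for $-u$ is legitimate.
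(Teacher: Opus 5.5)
Your proposal is correct and matches the paper's proof: apply \cref{prop:u bounded-measurability of dual representation} to $v=-u$, observe $\mathfrak{i}=-\mathfrak{s}$ and $I^x_\delta=S^x_\delta$, and take the same measurable selector. Your extra checks (finiteness of $\mathfrak{i}$ via the essential infimum, and the direct sup-norm closedness argument) are valid but not needed.
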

\begin{proof}
    Let $v\coloneq -u\in L^\infty(\Ome)$ and $\mathfrak{s}\colon \Ome\to \IR, s(x)=\sup_{\phi\in\PP_x}\int_{\Ome} v\phi\dd y$. Then, by \cref{prop:u bounded-measurability of dual representation} we obtain that $\mathfrak{s}$ is measurable implying that $\mathfrak{i}=-\mathfrak{s}$ is measurable. Furthermore, for $\delta>0$ and $x\in \Ome$ the set
    \begin{align}
        S^x_\delta\coloneq\set{\phi\in\PP^x_c}{\int_{\Ome} v(y) \phi(y)\dd y\geq \mathfrak{s}(x)-\delta}
    \end{align}
    is nonempty, closed and there exists a measurable selector ${\Psi}\colon \Ome\to C(\overline{\Ome})$ such that ${\Psi}(x)\in S_\delta^x$. Since
    \begin{align}
        -\int_{\Ome} u(y)\phi(y)\dd y=\int_{\Ome}v(y)\phi(y)\dd y\geq s(x)-\delta=-(\mathfrak{i}(x)+\delta)
    \end{align}
    we obtain that $I_\delta^x=S_\delta^x$ for $\delta>0$ and any $x\in \Ome$ and therefore (ii) and (iii) follow directly.
\end{proof}
We now use the selector functions provided by \cref{prop:u bounded-measurability of dual representation,cor:u bounded-measurability of dual representation} to redefine the set of test functions independently of $x$. Recall that in the continuous setting we relied on the fact that random walks define measurable parameter integrals. In the present setting it is essential that the test functions are jointly measurable.
\begin{lemma}\label{lem:u bounded-phi to Psi} 
    Define 
    \begin{align}\label{eq:P definition Lebesgue case}
        \PP_c\coloneq \left\{ 
        \Psi\in L^1(\Ome\times\Ome) \, \middle| \, 
        \begin{aligned}
            &\Psi\geq 0\text{ on }\Ome\times\Ome,\,\Psi(x',\cdot)\in C(\overline{\Ome}),\,\int_{\Ome} \Psi(x',y)\dd y= 1,\\ 
            &\text{and,}\,\supp \Psi(x',\cdot) \subset {\clball{\eps}{x'}}\cap\overline{\Ome}\,\text{ for }x'\in \Ome
        \end{aligned}
        \right\}.
    \end{align}
    Then, under the conditions of \cref{thm:dual representation for bounded functions} for all $u\in L^\infty(\Ome)$ we have
    \begin{align}
        \int_\Ome \sup_{\phi\in\PP^x_c}\int_{\Ome} u(y) \phi(y)\dd y \,\vrho_0(x) \dd x=\sup_{\Psi\in\PP_c} \int_\Ome\int_\Ome \Psi(x,y)u(y)\dd y \,\vrho_0(x)\dd x
    \end{align}
    and
    \begin{align}
        \int_\Ome \inf_{\phi\in\PP^x_c}\int_{\Ome} u(y) \phi(y)\dd y \,\vrho_1(x)\dd x=\inf_{\Psi\in\PP_c} \int_\Ome\int_\Ome \Psi(x,y)u(y)\dd y\,\vrho_1(x)\dd x.
    \end{align}
\end{lemma}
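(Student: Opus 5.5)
We prove only the first identity; the second follows by applying it to $-u$, exactly as in \cref{cor:u bounded-measurability of dual representation}. We argue by two inequalities, mirroring \cref{lem:u cont-dual representation}. The key difference from the continuous case is that the supremum need not be attained. We therefore work with the $\delta$-almost maximizing selectors from \cref{prop:u bounded-measurability of dual representation} and send $\delta\to 0$.

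\emph{Inequality ``$\geq$''.} Fix $\Psi\in\PP_c$. For every $x\in\Ome$ we have $\Psi(x,\cdot)\in\PP^x_c$, so
\begin{align}
\int_\Ome\Psi(x,y)u(y)\dd y\leq\mathfrak{s}(x).
\end{align}
The right-hand side is measurable by \cref{prop:u bounded-measurability of dual representation}~\ref{prop:u bounded-measurability of dual representation:i}. The left-hand side is measurable in $x$ by Fubini, since $\Psi\in L^1(\Ome\times\Ome)$ and $u\in L^\infty$. Multiplying by $\vrho_0\geq 0$, integrating over $\Ome$, and taking the supremum over $\Psi$ gives this inequality. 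Both sides are finite because $\abs{\mathfrak{s}}\leq\norm{u}_\infty$ and $\vrho_0(\Ome)\leq 1$.

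\emph{Inequality ``$\leq$''.} Fix $\delta>0$ and take the measurable selector $x\mapsto\Psi_\delta(x)\in S^x_\delta\subset C(\overline{\Ome})$ from \cref{prop:u bounded-measurability of dual representation}~\ref{prop:u bounded-measurability of dual representation:iii}. Define
\begin{align}
\Psi(x,y)\coloneq\Psi_\delta(x)(y).
\end{align}
Then $\Psi\geq0$, each section $\Psi(x,\cdot)$ is continuous, has integral one and has support in $\clball{\eps}{x}\cap\overline{\Ome}$, and
\begin{align}
\int_\Ome\Psi(x,y)u(y)\dd y\geq\mathfrak{s}(x)-\delta.
\end{align}
Integrating against $\vrho_0$ and using $\vrho_0(\Ome)\leq1$ yields
\begin{align}
\sup_{\PP_c}\int_\Ome\int_\Ome\Psi(x,y)u(y)\dd y\,\vrho_0(x)\dd x\geq\int_\Ome\mathfrak{s}\,\vrho_0\dd x-\delta,
\end{align}
and letting $\delta\to0$ concludes, provided $\Psi\in\PP_c$.

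\emph{Main obstacle: joint measurability of $\Psi$ and $\Psi\in L^1(\Ome\times\Ome)$.} Consider the evaluation map $e\colon C(\overline{\Ome})\times\overline{\Ome}\to\IR$, $e(\phi,y)=\phi(y)$. It is jointly continuous, since
\begin{align}
\abs{\phi(y)-\phi'(y')}\leq\norm{\phi-\phi'}_\infty+\abs{\phi'(y)-\phi'(y')}.
\end{align}
Hence $\Psi$ is a Carathéodory function: it is measurable in $x$ as the composition of the measurable selector $\Psi_\delta$ with the continuous map $\phi\mapsto\phi(y)$, and continuous in $y$. Carathéodory functions on $\Ome\times\overline{\Ome}$, with $\overline{\Ome}$ separable metric and $C(\overline{\Ome})$ Polish, are jointly $\BB_\Ome\otimes\BB_{\overline{\Ome}}$-measurable (e.g.\ \cite[Lemma 4.51]{aliprantis2006}). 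Since $\Psi\geq0$, Tonelli gives
\begin{align}
\iint\Psi=\int_\Ome 1\dd x=\abs{\Ome}<\infty,
\end{align}
using that $\Ome$ is bounded, so $\Psi\in L^1(\Ome\times\Ome)$. This is the step where the choice of continuous test functions in \cref{lem:u bounded-dual problem continuous test functions} pays off: for general $L^1$-valued selectors, joint measurability would not follow so simply.
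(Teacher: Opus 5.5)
Your proof is correct and follows the paper's argument essentially step for step. The ``$\geq$'' direction uses pointwise comparison together with measurability of $\mathfrak{s}$ and Fubini, and the ``$\leq$'' direction uses the $\delta$-almost maximizing measurable selector from \cref{prop:u bounded-measurability of dual representation}, which becomes a jointly measurable element of $\PP_c$ because $(x,y)\mapsto\Psi_\delta(x)(y)$ is Carath\'eodory. Bounding the error by $\delta\,\vrho_0(\Ome)\leq\delta$ instead of the paper's rescaling $\delta_0=\delta/\vrho_0(\Ome)$ is a harmless, slightly cleaner variant that also covers the degenerate case $\vrho_0(\Ome)=0$.
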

\begin{proof}
    Fix an arbitrary $\Psi\in\PP_c$ and note that for each $x\in \Ome$ by construction $\Psi(x,\cdot)\in\PP^x_c$ which implies that
    \begin{align}
        \sup_{\phi\in\PP^x_c}\int_{\Ome} u(y) \phi(y)\dd y\geq \int_{\Ome} u(y)\Psi(x,y)\dd y
    \end{align}
    for every $x\in \Ome$. Due to \cref{prop:u bounded-measurability of dual representation}~\ref{prop:u bounded-measurability of dual representation:i}, the left hand side is measurable, whereas the measurability of the right hand side is provided by the Fubini--Tonelli theorem. Hence, we integrate over $x\in\Ome$ yielding
    \begin{align}
        \int_\Ome \sup_{\phi\in\PP^x_c}\int_{\Ome} &u(y) \phi(y)\dd y\, \vrho_0(x)\dd x\geq \int_\Ome\int_\Ome u(y)\Psi(x,y)\dd y \,\vrho_0(x)\dd x.
    \end{align}
    Since $\Psi\in\PP_c$ is arbitrarily chosen, we obtain
    \begin{align}
        \int_\Ome \sup_{\phi\in\PP^x_c}\int_{\Ome} u(y) \phi(y)\dd y \, \vrho_0(x)\dd x\geq\sup_{\Psi\in\PP_c} \int_\Ome\int_\Ome \Psi(x,y)u(y)\dd y\,\vrho_0(x)\dd x.
    \end{align}
    For the converse inequality fix $\delta>0$ and define $\delta_0\coloneq \frac{\delta}{\vrho_0(\Ome)}>0$. Evoking \cref{prop:u bounded-measurability of dual representation}~\ref{prop:u bounded-measurability of dual representation:iii} implies the existence of a measurable selector function $\Phi\colon \Ome\to C(\overline{\Ome})$ such that 
    \begin{align}\label{eq:eps-trick}
        \int_{\Ome} u(y) [\Phi(x)](y)\dd y\geq \sup_{\phi\in\PP^x_c}\int_{\Ome} u(y) \phi(y)\dd y-\delta_0
    \end{align}
    and $\Phi(x)\in \PP^x_c$ for any $x\in\Ome$.
    We define $\Psi^*\colon \Ome\times\Ome\to\IR$ by $\Psi^*(x,y)=\textrm{ev}_y\circ\Phi(x)$ where $\textrm{ev}_y\colon C(\overline{\Ome})\to\IR, \phi\mapsto \phi(y)$. So for fixed $y\in\Ome$ we obtain that $\Psi^*(\filll,y)$ is the composition of a continuous function and a Borel-measurable function, hence a measurable function itself. Since $\Psi^*$ is continuous in its second argument, it is a Carathéodory function and thus jointly measurable. Furthermore, $\Lpnorm{\Psi^*}{1}{\Ome\times\Ome}=\abs{\Ome}<\infty$ and we conclude that $\Psi^*\in\PP_c$ is an admissible test function.
    Hence, integrating \eqref{eq:eps-trick} over $\Ome$ gives 
    \begin{align}
        \int_\Ome\int_{\Ome} \Psi^*(x,y) u(y) \dd  y\,\vrho_0(x)\dd x\geq \int_\Ome\sup_{\phi\in\PP^x_c}\int_{\Ome} u(y) \phi(y)\dd  y\,\vrho_0(x)\dd x-\delta.
    \end{align}
    Since $\Psi^*\in\PP_c$ we have
    \begin{align}
        \sup_{\Psi\in\PP_c} \int_\Ome\int_\Ome \Psi(x,y)u(y)\dd y\,\vrho_0(x)\dd x\geq \int_\Ome\sup_{\phi\in\PP^x_c}\int_{\Ome} u(y) \phi(y)\dd y\,\vrho_0(x)\dd x-\delta
    \end{align}
    and sending $\delta$ to zero completes the proof.
    The proof for the second equality is completely analogue by applying \cref{cor:u bounded-measurability of dual representation}.
\end{proof}
\begin{remark}[Measurability of the test function]\label{rmk:measurability of test function}
    The main difficulty in this construction is to ensure that the test function $\Psi$ is such that $\int_\Ome\int_\Ome \Psi(x,y)\dd y\dd x$ is well defined. This is the primary reason for restricting the test functions to continuous ones as in this case we may use that Carathéodory functions are jointly measurable. Without this additional regularity, the adapted measurable selector theorem would only yield separately measurable functions, which causes difficulties for double integration; see \cite{sierpinski1920rapports}. Notably, in the general Polish metric space setting with a $\sigma$-finite reference measure $\nu$ we are able to apply the adapted measurable selector theorem but since the resulting test function is only separately measurable, we cannot perform the double integration.
\end{remark}
We now relax the assumptions on the test functions to simplify the subsequent analysis. The continuity assumption was mainly needed to construct jointly measurable functions from an infinite family. We proceed analogously to \cref{lem:u bounded-dual problem continuous test functions} and therefore omit intermediate steps that are entirely analogous.
\begin{lemma}\label{lem:u bounded-Psi L1 approximation}
    Define the set
    \begin{align}
        \PP\coloneq  \left\{ 
        \Psi\in L^1(\Ome\times\Ome) \, \middle| \, 
        \begin{aligned}
            &\Psi\geq 0\text{ a.e. on }\Ome\times\Ome,\quad\int_{\Ome} \Psi(x',y)\dd y= 1,\\ 
            &\esssupp \Psi(x',\cdot) \subset {\clball{\eps}{x'}}\cap\overline{\Ome}\text{ for }x'\in \Ome
        \end{aligned}
        \right\}.
    \end{align}
    Then, under the conditions of \cref{thm:dual representation for bounded functions} for all $u\in L^\infty(\Ome)$ we have 
    \begin{align}
        \sup_{\Psi\in\PP_c} \int_\Ome\int_\Ome \Psi(x,y)u(y)\dd y\,\vrho_0(x)\dd x=\sup_{\Psi\in\PP} \int_\Ome\int_\Ome \Psi(x,y)u(y)\dd y\,\vrho_0(x)\dd x
    \end{align}
    and
    \begin{align}
        \inf_{\Psi\in\PP_c} \int_\Ome\int_\Ome \Psi(x,y)u(y)\dd y\,\vrho_1(x)\dd x=\inf_{\Psi\in\PP} \int_\Ome\int_\Ome \Psi(x,y)u(y)\dd y\,\vrho_1(x)\dd x.
    \end{align}
\end{lemma}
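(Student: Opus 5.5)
We only treat the first equality; the second follows by replacing $u$ with $-u$, exactly as in the proof of \cref{cor:u bounded-measurability of dual representation}. Since $\PP_c\subset\PP$ (every $\Psi\in\PP_c$ is nonnegative and integrable, and $\Psi(x',\cdot)$ has unit mass with support in $\clball{\eps}{x'}\cap\overline{\Ome}$), the inequality ``$\leq$'' is immediate. For ``$\geq$'' we could mimic the squeeze-and-mollify argument of \cref{lem:u bounded-dual problem continuous test functions} in the $y$-variable. However, there is a shortcut that uses results we already have. For every $\Psi\in\PP$, Fubini--Tonelli gives that for a.e. $x\in\Ome$ the section $\Psi(x,\cdot)$ lies in $\PP^x$ and $x\mapsto\int_\Ome\Psi(x,y)u(y)\dd y$ is measurable. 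We then apply \cref{cor:u bounded-dual representation of esssup and essinf} and \cref{lem:u bounded-dual problem continuous test functions} pointwise:
\begin{align}
\int_\Ome \Psi(x,y)u(y)\dd y\leq \sup_{\phi\in\PP^x}\int_\Ome u\phi\dd y=\sup_{\phi\in\PP^x_c}\int_\Ome u\phi\dd y\qquad\text{for a.e. }x\in\Ome.
\end{align}

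Next we integrate this inequality against $\vrho_0$. The left-hand side is measurable by the previous step, and the right-hand side is measurable by \cref{prop:u bounded-measurability of dual representation}~\ref{prop:u bounded-measurability of dual representation:i}. Integrating and invoking \cref{lem:u bounded-phi to Psi} yields
\begin{align}
\int_\Ome\int_\Ome\Psi(x,y)u(y)\dd y\,\vrho_0(x)\dd x\leq \int_\Ome\sup_{\phi\in\PP^x_c}\int_\Ome u\phi\dd y\,\vrho_0(x)\dd x=\sup_{\Psi'\in\PP_c}\int_\Ome\int_\Ome\Psi'(x,y)u(y)\dd y\,\vrho_0(x)\dd x.
\end{align}
Taking the supremum over $\Psi\in\PP$ gives ``$\geq$'', which completes the argument. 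Boundedness of every term is guaranteed because $\abs{\int\Psi(x,y)u(y)\dd y}\leq\norm{u}_\infty$ and $\vrho_0$ is finite.

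The main obstacle is the null-set bookkeeping in the definition of $\PP$. The constraints $\int\Psi(x',y)\dd y=1$ and the support condition are formulated for every $x'$, whereas $\Psi$ is only an $L^1$ class. The argument therefore needs the sections to lie in $\PP^{x}$ for $\vrho_0$-a.e. $x$, and this holds once we fix a representative. An exceptional $\lambda^N$-null set of $x$ is harmless because $\vrho_0\ll\lambda^N$. If one insists on a direct proof instead, the hard step would be to squeeze and mollify in $y$ uniformly in $x$. This would use the map $y\mapsto\frac{n+1}{n}y-\frac{x}{n}$, which depends on $x$, together with the joint continuity of translations in $L^1(\Ome\times\Ome)$. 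The pointwise reduction above avoids that difficulty.
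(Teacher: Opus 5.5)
Your proof is correct, and it takes a genuinely different and shorter route than the paper.

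\textbf{The paper's route.} The paper proves the nontrivial inequality by direct construction. It fixes a near-optimal $\hat\Psi\in\PP$. It then squeezes each section $\hat\Psi(x,\cdot)$ toward $x$ and mollifies in $y$, producing kernels $\Psi^*_{n,m}$ that it treats as elements of $\PP_c$. Finally it passes to the limits $m\to\infty$, $n\to\infty$ under the $x$-integral by dominated convergence. The pointwise bound $\int_\Ome\Psi^*_{n,m}(x,y)u(y)\dd y\le\esssup{u}{\ball{\eps}{x}\cap\Ome}$ serves there only as the domination device.

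\textbf{Your route.} You instead sandwich
\[
\sup_{\Psi\in\PP}\int_\Ome\int_\Ome\Psi u\dd y\,\vrho_0\dd x\;\le\;\int_\Ome\mathfrak{s}(x)\,\vrho_0(x)\dd x\;=\;\sup_{\Psi\in\PP_c}\int_\Ome\int_\Ome\Psi u\dd y\,\vrho_0\dd x\;\le\;\sup_{\Psi\in\PP}\int_\Ome\int_\Ome\Psi u\dd y\,\vrho_0\dd x .
\]
The first inequality combines the trivial direction of the pointwise duality with the pointwise squeeze-and-mollify of \cref{lem:u bounded-dual problem continuous test functions}. The equality is exactly \cref{lem:u bounded-phi to Psi}. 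So the lemma becomes a corollary of earlier results, and there is no circularity, since none of those results uses the lemma.

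\textbf{What each approach buys.} Your route never has to verify that the constructed kernels lie in $\PP_c$ for all $x$ simultaneously. That verification involves:
\begin{itemize}
\item joint measurability of an $x$-dependent affine squeeze composed with convolution;
\item continuity in $y$;
\item exact unit mass in $\Ome$ for $x$ near $\partial\Ome$, where a fixed, $x$-independent mollification radius can push mass out of $\Ome$ unless one renormalizes.
\end{itemize}
The paper's proof passes over these points quickly. In exchange, the paper's construction gives an explicit continuous approximation of any given $\hat\Psi\in\PP$. That approximation does not rely on the measurable-selection machinery behind \cref{lem:u bounded-phi to Psi}, although that machinery is needed for the full theorem anyway.
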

\begin{proof}
    For $\delta>0$ choose $\hat{\Psi}\in \PP$ such that
    \begin{align}\label{eq:Phi continuous derestrict1}
        \int_\Ome\int_\Ome \hat{\Psi}(x,y) u(y) \dd y\,\vrho_0(x)\dd x\geq \sup_{\Psi\in\PP} \int_\Ome\int_\Ome \Psi(x,y)u(y)\dd y\,\vrho_0(x)\dd x-\delta
    \end{align}
    and define for $n\in\IN$ the squeezed function
    \begin{align}
        \hat{\Psi}_n(x,y)\coloneq \left(\frac{n+1}{n}\right)^N \hat{\Psi}\left(x,\left(\frac{n+1}{n}\right)y-\frac{x}{n}\right)
    \end{align}
    for $(x,y)\in \Ome\times\frac{n}{n+1}(\Ome+\frac{x}{n})$ and zero otherwise. Next, for fixed $n\in\IN$ we define a standard mollifier for any $x\in \Ome$ by
    \begin{align}
        \eta_m(x)\coloneq \left(\frac{\eps}{m+2}\right)^{-N}\eta\left(\frac{x}{\frac{\eps}{m+2}}\right)
    \end{align}
    where $\eta\in \Cci(\IRN)$ such that $\int_\IRN \eta\dd x=1$, $\eta\geq 0$, and $\supp\eta\subset\ball{1}{0}$. After extending $\hat{\Psi}_n(x,\cdot)$ to $\IRN$ by zero we define the convolution
    \begin{align}
        \Psi^*_{n,m}(x,y)\coloneq \big(\hat{\Psi}_n(x,\cdot)*\eta_m\big)(y)
    \end{align}
    for all $y\in\IRN$.
    As shown in \cref{lem:u bounded-dual problem continuous test functions}, we have $\hat{\Psi}_n\xrightarrow{n\to\infty}\hat{\Psi}$ and $\Psi^*_{n,m}\xrightarrow{m\to\infty}\hat{\Psi}_n$ in $L^1(\Ome)$.
    We can then apply the Lebesgue convergence theorem since $\Psi^*_{n,m}(x,\filll)\in \PP^x$ by construction and thus by \cref{cor:u bounded-dual representation of esssup and essinf,lem:u bounded-dual problem continuous test functions} we have
     \begin{align}
        \esssup{u}{\ball{\eps}{x}\cap\Ome}=\sup_{\phi\in \PP^x}\int_\Ome \phi(y)u(y)\dd y\geq \int_\Ome \Psi^*_{n,m}(x,y)u(y)\dd y.
     \end{align}
     As $\int_\Ome\esssup{u}{\ball{\eps}{x}\cap\Ome}\,\vrho_0(x)\dd x\leq \norm{u}_{L^\infty(\Ome)}\abs{\Ome}<\infty$ the convergence is dominated by an integrable function and we end up with
     \begin{align}
        \sup_{\Psi\in \PP_c}&\int_\Ome\int_\Ome \Psi(x,y) u(y) \dd y \,\vrho_0(x)\dd x\geq \lim_{n\to\infty}\lim_{n\leq m\to\infty}\int_\Ome\int_\Ome \Psi^*_{n,m}(x,y) u(y)\dd y\,\vrho_0(x)\dd x\\
        &=\lim_{n\to\infty}\int_\Ome\lim_{n\leq m\to\infty}\int_\Ome \Psi^*_{n,m}(x,y) u(y)\dd y\,\vrho_0(x)\dd x
        =\lim_{n\to\infty}\int_\Ome\int_\Ome \hat{\Psi}_n(x,y) u(y)\dd y\,\vrho_0(x)\dd x \\
        &=\int_\Ome\lim_{n\to\infty}\int_\Ome \hat{\Psi}_n(x,y) u(y)\dd y\,\vrho_0(x)\dd x
        =\int_\Ome\int_\Ome \hat{\Psi}(x,y) u(y)\dd y\,\vrho_0(x)\dd x.
     \end{align}
     Hence, by \eqref{eq:Phi continuous derestrict1} and sending $\delta$ to zero we finish the proof.
\end{proof}

\subsection{Nonlocal Gradient and Divergence}\label{u bounded-nonlocal gradient}
With the dual representation at hand, we rearrange the terms to obtain reformulations involving the same nonlocal gradient as in the continuous case and a corresponding nonlocal divergence. A similar derivation of nonlocal operators can be found in \cite{gilboa2009nonlocal}.
\begin{remark}
    Using the definition of the nonlocal gradient introduced in \cref{def:nonlocal_gradient} a linear operator $\grad_\eps\colon L^\infty(\Ome)\to L^\infty(\Ome\times\Ome)$ is induced by
    \begin{align}
        \grad_\eps[u](x,y)\coloneq \frac{u(y)-u(x)}{\eps}\qquad\text{for }u\in L^\infty(\Ome).
    \end{align}
\end{remark}
Similar to \cref{def:nonlocal_divergence_random_walk} we define a nonlocal divergence that acts like the adjoint operator. However, due to the absolute continuity of $\vrho_0$ and $\vrho_1$ as assumed in \cref{thm:dual representation for bounded functions}, the nonlocal divergence can be formulated independently of the conditional distributions and therefore becomes unweighted.
\begin{definition}[Nonlocal divergence]\label{def:u bounded-nonlocal divergence}
    The nonlocal divergence $\div_\eps\colon L^1(\Ome\times\Ome)\to L^1(\Ome)$ is defined by
    \begin{align}
        \div_\eps[f](y)\coloneq \int_{\ball{\eps}{y}\cap\Ome}\frac{f(y,x)-f(x,y)}{\eps}\dd x\qquad\text{for all }y\in \Ome.
    \end{align}
\end{definition}
\begin{remark}
    Note that when inserting $\Psi\in\PP$ we implicitly only integrate over $\ball{\eps}{y}\cap\Ome$ for each $y\in\Ome$ due to the support condition. 
    For general functions $f\in L^1(\Ome\times\Ome)$, however, we need to include the integration domain $\ball{\eps}{x}\cap\Ome$ into the definition of the nonlocal divergence for consistency, see \cref{prop:u bounded-consistency}.
\end{remark}
Those definitions lead to a similar duality reformulation of the total variation as the one we obtained for the continuous case.
In the rest of this section we abbreviate
\begin{align}
    \tv_\eps(u) := 
    \int_\Omega \frac{\esssup{u}{\ball{\eps}{x}\cap\Ome}-u(x)}{\eps}\vrho_0(x)\dd x+\int_\Ome \frac{u(x)-\essinf{u}{\ball{\eps}{x}\cap\Ome}}{\eps}\vrho_1(x)\dd x,
\end{align}
where the essential supremum and infimum is taken with respect to the Lebesgue measure.
For notational convenience, we denote the vector of densities by $\Frho= (\vrho_0,\vrho_1)$ and the vector of test functions by $\FPsi=(\Psi_0,\Psi_1)\in \PP\times\PP$ and define the antisymmetric pairing $[\FPsi;\Frho]\coloneq (\Psi_0\vrho_0)-(\Psi_1\vrho_1)$ as a function from $\Ome\times\Ome$ to $\IR$, where $(\Psi_i\vrho_i)(x,y)\coloneq \Psi_i(x,y)\vrho_i(x)$ for $x,y\in \Ome$.
Note that in the following, we shall write $\div_\eps\left[\FPsi;\Frho\right]$ in place of $\div_\eps\left[[\FPsi;\Frho]\right]$.
\begin{theorem}[Dual representation of $\tv_\eps$]\label{thm:u bounded-dual representation}
    Let $\Ome\subset\IRN$ satisfy \cref{ass:u bounded-feature space}, $\mu\in\P(\Ome\times\{0,1\})$ be a probability measure and $\vrho_i\coloneq\mu(\filll\times\{i\})$ for $i=0,1$ the respective conditional distributions such that $\vrho_0,\vrho_1\ll\lambda^N$. Then, for $u\in L^\infty(\Ome)$ the adversarial total variation $\tv_\eps$ admits the following representations
    \begin{align}
        \tv_\eps(u)&=\sup_{\FPsi\in\PP\times\PP} \int_\Ome\int_\Ome \grad_\eps[u][\FPsi;\Frho]\dd y\dd x=\sup_{\FPsi\in\PP\times\PP}-\int_\Ome u \div_\eps\left[\FPsi;\Frho\right]\dd y.
    \end{align}
\end{theorem}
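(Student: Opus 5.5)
The plan is to mirror the proof of \cref{prop:u cont-nonlocal gradient,prop:u cont-nonlocal divergence} from the continuous setting. The first equality will come from \cref{thm:dual representation for bounded functions} and the normalization $\int_\Ome\Psi(x,y)\dd y=1$. The second equality will come from a Fubini-type integration by parts that uses the support condition in $\PP$.

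First I handle the $\vrho_0$-part. By \cref{thm:dual representation for bounded functions},
\begin{align}
\int_\Ome\frac{\esssup{u}{\ball{\eps}{x}\cap\Ome}-u(x)}{\eps}\vrho_0(x)\dd x
=\sup_{\Psi_0\in\PP}\frac1\eps\left[\int_\Ome\int_\Ome\Psi_0(x,y)u(y)\dd y\,\vrho_0(x)\dd x-\int_\Ome u(x)\vrho_0(x)\dd x\right].
\end{align}
Since $\int_\Ome\Psi_0(x,y)\dd y=1$ for every $x$, the second term equals $\int_\Ome\int_\Ome\Psi_0(x,y)u(x)\vrho_0(x)\dd y\dd x$. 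The bracket therefore becomes $\int_\Ome\int_\Ome\grad_\eps[u](x,y)\Psi_0(x,y)\vrho_0(x)\dd y\dd x$. All integrands are in $L^1(\Ome\times\Ome)$ because $u\in L^\infty$, $\Psi_0\in L^1$ and $\vrho_0\in L^1$; in particular $\int\int\Psi_0(x,y)\vrho_0(x)\dd y\dd x=\vrho_0(\Ome)\le1$. Hence the splitting and Fubini are legitimate.

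The $\vrho_1$-part is handled analogously, with the infimum turned into a supremum of $-\grad_\eps[u]\Psi_1\vrho_1$. The two suprema run over independent variables $\Psi_0,\Psi_1$, so their sum is the supremum over $\FPsi\in\PP\times\PP$ of the sum of the two integrals. By the definition of $[\FPsi;\Frho]$, this sum is $\int_\Ome\int_\Ome\grad_\eps[u][\FPsi;\Frho]\dd y\dd x$, which gives the first equality.

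For the second equality, fix $f\coloneq[\FPsi;\Frho]\in L^1(\Ome\times\Ome)$. I write
\begin{align}
\int_\Ome\int_\Ome\frac{u(y)-u(x)}{\eps}f(x,y)\dd y\dd x
=\frac1\eps\int_\Ome\int_\Ome u(y)f(x,y)\dd y\dd x-\frac1\eps\int_\Ome\int_\Ome u(x)f(x,y)\dd y\dd x .
\end{align}
In the second integral I swap the names of $x$ and $y$. Then I apply Fubini, which is justified since $uf\in L^1$, to collect everything as $\frac1\eps\int_\Ome u(y)\int_\Ome\big(f(x,y)-f(y,x)\big)\dd x\dd y$.

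The remaining step is to restrict the inner integral to $\ball{\eps}{y}\cap\Ome$, so that it matches \cref{def:u bounded-nonlocal divergence}. Here I use that $\esssupp\Psi_i(x,\cdot)\subset\clball{\eps}{x}$. Thus $f(x,y)=0$ for a.e.\ $(x,y)$ with $\abs{x-y}>\eps$, and the same holds for $f(y,x)$ because the condition $\abs{x-y}>\eps$ is symmetric. The sphere $\abs{x-y}=\eps$ is Lebesgue-null in $\Ome\times\Ome$, so the open ball suffices.

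The main obstacle is turning ``$\esssupp\Psi(x,\cdot)\subset\clball{\eps}{x}$ for every $x$'' into ``$\Psi=0$ a.e.\ on $\{\abs{x-y}>\eps\}$''. I will do this with Tonelli applied to $\Psi\Ie_{\{\abs{x-y}>\eps\}}$: for each $x$ the inner integral vanishes, so the double integral is zero. This yields $\int_\Ome u\,(-\div_\eps[\FPsi;\Frho])\dd y$, and taking the supremum finishes the proof.
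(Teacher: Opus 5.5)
Your proposal is correct and follows essentially the same route as the paper. The paper proves the first equality in \cref{prop:u bounded-nonlocal gradient} by combining \cref{thm:dual representation for bounded functions} with the normalization $\int_\Ome\Psi(x,y)\dd y=1$, and the second in \cref{prop:u bounded-nonlocal divergence} via Fubini--Tonelli and the support condition. The differences are minor: you swap variable names rather than reinserting the normalization in the integration by parts, and you justify restricting the inner integral to $\ball{\eps}{y}\cap\Ome$ (via Tonelli and the null set $\{\abs{x-y}=\eps\}$) more carefully than the paper does.
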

\begin{proof}
    The first equality is shown in \cref{prop:u bounded-nonlocal gradient} and the second one in \cref{prop:u bounded-nonlocal divergence}.
\end{proof}
\begin{remark}[Connection to the $C(\X)$ case]
    Let $\Psi\in\PP$ and define a family of measures by $\Psi_x(A) \coloneq \int_A\Psi(x,y)\dd y$ for any $x\in \Ome$. Evoking Fubini--Tonelli and the properties of $\Psi$ we obtain that this family defines a random walk on $\Ome$ in the sense of \cref{def:random walk}. In particular, due to the support condition of $\Psi$ we have that $\FPsi=\set{\Psi_x}{x\in \Ome}\in \MM$
    and, furthermore, using joint measurability of $\Psi$ and the Fubini--Tonelli theorem we can express the divergence from \cref{def:nonlocal_divergence_random_walk} as
    \begin{align}
        \div^{\vrho_i}_\eps[\FPsi](A)
        &=
        \frac{\int_A\Psi_x(\Omega)\vrho_i(x)\d x - \int_\Omega \Psi_x(A)\vrho_i(x)\d x}{\eps}
        \\
        &=
        \frac{\int_A\int_\Omega\Psi(x,y)\dd y\vrho_i(x)\dd x  - \int_\Omega \int_A\Psi(x,y)\dd y\vrho_i(x)\dd x}{\eps}
        \\
        &=
        \frac{\int_A\int_\Omega\Psi(x,y)\dd y\vrho_i(x)\dd x  - \int_A\int_\Omega\Psi(x,y)\vrho_i(x)\dd x\dd y}{\eps}
        \\
        &=
        \int_A\frac{\int_\Omega\Psi(y,x)\dd x\vrho_i(y)  - \int_\Omega\Psi(x,y)\vrho_i(x)\dd x}{\eps}\dd y.
    \end{align}
    Hence, the measure $\div_\eps^{\vrho_i}[\FPsi]$ has a Lebesgue density given by
    \begin{align}
        y\mapsto
        \frac{\int_{\ball{\eps}{y}\cap\Ome}\Psi(y,x)\vrho_i(y)\dd x  - \int_{\ball{\eps}{y}\cap\Ome}\Psi(x,y)\vrho_i(x)\dd x}{\eps}
        =
        \div_\eps[\Psi\vrho_i]
    \end{align}
    where on the right hand side we have the divergence defined in \cref{def:u bounded-nonlocal divergence}.
    In total, for $\FPsi=(\Psi_0,\Psi_1)\in \PP\times\PP$ we find a random walk $\underline{\FPsi}\in \MM\times\MM$ such that
    \begin{align}
        \div_\eps^{\Frho}[\underline{\FPsi}] = \div_\eps^{\rho_0}[\Psi^0]-\div_\eps^{\vrho_1}[\Psi^1]
        =
        \div_\eps[\FPsi;\Frho]\dd\lambda^N.
    \end{align}
\end{remark}
The definition of the nonlocal divergence is not only consistent within the two settings examined in this paper but also with its local counter part.
In the next statement we show that the nonlocal definitions of gradient and divergence are consistent with their classical local counterparts under appropriate regularity assumptions on $u$. A Taylor expansion of the nonlocal terms recovers the classical gradient and divergence after suitable scaling.
\begin{proposition}\label{prop:u bounded-consistency}
    Let $\Ome\subset\IRN$ be a bounded domain. 
    For $u\in C^{2}(\overline{\Ome})$ and $\vrho\in C^1(\overline{\Ome})$ we have
    \begin{align}
        \frac{1}{C_N\eps^{N}}\div_\eps\big[\vrho\cdot \grad_\eps[u]\big]=\div\big(\vrho\nabla u\big)+o(1)\quad\text{on}\quad\Ome_\eps,
    \end{align}
    where $C_N$ is a constant only depending on the dimension $N$.
\end{proposition}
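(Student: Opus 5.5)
The plan is a direct Taylor expansion of $\div_\eps$ applied to the specific integrand $f(x,y)\coloneq\vrho(x)\grad_\eps[u](x,y)$, followed by a symmetry argument on the unit ball. The first step is to compute the antisymmetric part appearing in \cref{def:u bounded-nonlocal divergence}. We have
\begin{align}
    f(y,x)-f(x,y)=\frac{\vrho(y)\big(u(x)-u(y)\big)-\vrho(x)\big(u(y)-u(x)\big)}{\eps}=\frac{\big(\vrho(x)+\vrho(y)\big)\big(u(x)-u(y)\big)}{\eps}.
\end{align}
For $y\in\Ome_\eps$ the ball $\ball{\eps}{y}$ lies inside $\Ome$, so the intersection with $\Ome$ disappears and
\begin{align}
    \div_\eps\big[\vrho\cdot\grad_\eps[u]\big](y)=\int_{\ball{\eps}{y}}\frac{\big(\vrho(x)+\vrho(y)\big)\big(u(x)-u(y)\big)}{\eps^2}\dd x.
\end{align}
Substituting $x=y+\eps z$ with $z\in\ball{1}{0}$ produces the factor $\eps^N$, which explains the normalization $\eps^{-N}$ in the statement.

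Next I expand both factors, using $u\in C^2(\overline{\Ome})$ and $\vrho\in C^1(\overline{\Ome})$:
\begin{align}
    u(y+\eps z)-u(y)&=\eps\nabla u(y)\cdot z+\tfrac{\eps^2}{2}z^\top D^2u(y)z+o(\eps^2),\\
    \vrho(y+\eps z)+\vrho(y)&=2\vrho(y)+\eps\nabla\vrho(y)\cdot z+o(\eps).
\end{align}
Multiplying and dividing by $\eps^2$ gives the integrand
\begin{align}
    \frac{2\vrho(y)\nabla u(y)\cdot z}{\eps}+\vrho(y)\,z^\top D^2u(y)z+\big(\nabla\vrho(y)\cdot z\big)\big(\nabla u(y)\cdot z\big)+o(1).
\end{align}
Here the cross term of $\eps\nabla u\cdot z$ with the $o(\eps)$ remainder of $\vrho$ is also $o(\eps^2)$, so it is absorbed.

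The singular term of order $\eps^{-1}$ is odd in $z$, so its integral over $\ball{1}{0}$ vanishes. For the quadratic terms I use the identity $\int_{\ball{1}{0}}z_iz_j\dd z=C_N\delta_{ij}$ with $C_N=\abs{\ball{1}{0}}/(N+2)$. This yields
\begin{align}
    \eps^{-N}\div_\eps\big[\vrho\cdot\grad_\eps[u]\big](y)=C_N\big(\vrho\Delta u+\nabla\vrho\cdot\nabla u\big)(y)+o(1)=C_N\div(\vrho\nabla u)(y)+o(1),
\end{align}
which is the claim after dividing by $C_N$.

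The only point needing care is that the $o(1)$ is uniform in $y\in\Ome_\eps$. This follows by writing the Taylor remainders in integral (mean-value) form and using the uniform continuity of $D^2u$ and $\nabla\vrho$ on the compact set $\overline{\Ome}$. I expect this to be routine, so there is no serious obstacle; the main thing to get right is the bookkeeping that cancels the $\eps^{-1}$ term exactly by oddness.
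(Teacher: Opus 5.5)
Your proposal is correct and follows essentially the same route as the paper: reduce to $\eps^{-2}\int_{\ball{\eps}{y}}\big(\vrho(x)+\vrho(y)\big)\big(u(x)-u(y)\big)\dd x$ on $\Ome_\eps$, Taylor expand both factors, discard the odd terms, and evaluate the second moments with $C_N=\abs{\ball{1}{0}}/(N+2)$. Your remark that the $o(1)$ is uniform on $\Ome_\eps$, by uniform continuity of $D^2u$ and $\nabla\vrho$ on $\overline{\Ome}$, is a small refinement that the paper leaves implicit.
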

\begin{proof}
    For $y\in \Ome_\eps$ we have $\ball{\eps}{y}\subset\Ome$ and thus
    \begin{align}
        \div_\eps\big[\vrho\cdot\grad_\eps[u]\big](y)=\frac{1}{\eps^2}\int_{\ball{\eps}{y}} \left(u(x)-u(y)\right)\left(\vrho(x)+\vrho(y)\right)\dd x.
    \end{align}
    The application of Taylor's expansion to both factors each yields
    \begin{align}
        \div_\eps\big[\vrho&\cdot\grad_\eps[u]\big](y)\\
        =&\frac{2}{\eps^2}\int_{\ball{\eps}{y}}\vrho(y)\dotproduct{\nabla u(y)}{x-y}\dd x+\frac{1}{\eps^2}\int_{\ball{\eps}{y}}\vrho(y)\dotproduct{x-y}{\nabla^2 u(y) (x-y)}\dd x\\
        &+\frac{1}{\eps^2}\int_{\ball{\eps}{y}} \dotproduct{\nabla u(y)}{x-y}
        \dotproduct{\nabla\rho(y)}{x-y} \dd x\\&+ \frac{1}{\eps^2}\int_{\ball{\eps}{y}}\frac{1}{2}\dotproduct{x-y}{\nabla^2 u(y) (x-y)}\dotproduct{\nabla\rho(y)}{x-y}\dd x +o(\eps^N)
        \\\eqcolon&I_1+I_2+I_3+I_4+o(\eps^N).
    \end{align}
    A direct calulation shows that
    \begin{align}
        \frac{\eps^2}{\vrho(y)}I_2&=\sum_{i,j=1}^N\p_{ij}u(y)\int_{\ball{\eps}{y}} (x_i-y_i)(x_j-y_j)\dd x=\sum_{i,j=1}^N\p_{ij}u(y)\eps^{N+2}\int_{\ball{1}{0}} z_iz_j\dd x\\&=\eps^{N+2}\sum_{i,j=1}^N\p_{ij}^2 u(y) \delta_{ij} C_N=\eps^{N+2} C_N  \Delta u(y)
    \end{align}
    and
    \begin{align}
        \eps^2I_3&=\sum_{i,j=1}^N\p_i u(y)\p_j \vrho(y)\int_{\ball{\eps}{y}}(x_i-y_i)(x_j-y_j)\dd x=\sum_{i,j=1}^N\p_i u(y)\p_j \vrho(y)\eps^{N+2}\int_{\ball{1}{0}}z_i z_j\dd z\\&=\eps^{N+2} C_N\sum_{i,j=1}^N\delta_{ij} \p_i u(y) \p_j \vrho(y)=\eps^{N+2}C_N\dotproduct{\nabla u(y)}{\nabla \vrho(y)}
    \end{align}
    where $C_N\coloneq \frac{1}{N+2}\frac{2 \pi^{\frac{N}{2}}}{\Gamma(\frac{N}{2})N}$. Furthermore, $I_1=I_4=0$ and therefore
    \begin{align}
		  \div_\eps\big[\vrho\cdot\grad_\eps[u]\big](y)&=\eps^N C_N\left(\Delta u(y)\vrho(y)+\nabla u(y)\nabla \vrho(y)\right)+o(\eps^{N})\\&=\eps^N C_N \div(\vrho\nabla u)(y)+o(\eps^{N}).
    \end{align}
    Dividing by $C_N\eps^N$ then yields the result.
\end{proof}
We now prove the two identities in \cref{thm:u bounded-dual representation}, involving the nonlocal gradient and divergence, respectively.
Applying \cref{thm:dual representation for bounded functions} and a straightforward rearrangement similar to \cref{prop:u cont-nonlocal gradient} leads to the first equality in \cref{thm:u bounded-dual representation}.
\begin{proposition}[Nonlocal gradient]\label{prop:u bounded-nonlocal gradient}
    Under the conditions of \cref{thm:u bounded-dual representation} for all $u\in L^\infty(\Ome)$ we have 
    \begin{align}
        \tv_\eps(u)
        =\sup_{\FPsi\in\PP\times\PP} \int_\Ome\int_\Ome \grad_\eps[u](x,y)[\FPsi;\Frho](x,y)\dd y\dd x.
    \end{align}
\end{proposition}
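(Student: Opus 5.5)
The plan is to mirror the proof of \cref{prop:u cont-nonlocal gradient}, using \cref{thm:dual representation for bounded functions} in place of \cref{thm:u cont-dual reformulation of tv}. First we split $\tv_\eps(u)$ into its $\vrho_0$-part and its $\vrho_1$-part. Both are finite, since $u\in L^\infty(\Ome)$, $\vrho_i$ has total mass at most one, and $\abs{\esssup{u}{\ball{\eps}{x}\cap\Ome}}\leq\norm{u}_\infty$. For the $\vrho_0$-part, \cref{thm:dual representation for bounded functions} gives
\begin{align}
\int_\Ome \frac{\esssup{u}{\ball{\eps}{x}\cap\Ome}-u(x)}{\eps}\vrho_0(x)\dd x
=\sup_{\Psi_0\in\PP}\frac1\eps\left[\int_\Ome\int_\Ome \Psi_0(x,y)\vrho_0(x)u(y)\dd y\dd x-\int_\Ome u(x)\vrho_0(x)\dd x\right].
\end{align}
We then use $\int_\Ome\Psi_0(x,y)\dd y=1$ for every $x$ to rewrite $u(x)\vrho_0(x)=\int_\Ome\Psi_0(x,y)u(x)\vrho_0(x)\dd y$. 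This turns the bracket into $\int_\Ome\int_\Ome\grad_\eps[u](x,y)\Psi_0(x,y)\vrho_0(x)\dd y\dd x$.

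Every function involved must be integrable on $\Ome\times\Ome$ so that the two double integrals may be merged. Here $\Psi_0\geq0$ with $\int_\Ome\int_\Ome\Psi_0\vrho_0\dd y\dd x=\vrho_0(\Ome)<\infty$, and $\grad_\eps[u]$ is bounded by $2\norm{u}_\infty/\eps$. Hence $\grad_\eps[u]\Psi_0\vrho_0\in L^1(\Ome\times\Ome)$, and Fubini--Tonelli justifies every rearrangement. Joint measurability of $\Psi_0$ comes from $\Psi_0\in L^1(\Ome\times\Ome)$, and that of $(x,y)\mapsto u(y)-u(x)$ is immediate.

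The $\vrho_1$-part is treated analogously. We use the infimum identity of \cref{thm:dual representation for bounded functions} and write $-\inf_{\Psi_1}(\cdot)=\sup_{\Psi_1}(-\,\cdot)$. This gives $\sup_{\Psi_1\in\PP}\int_\Ome\int_\Ome-\grad_\eps[u](x,y)\Psi_1(x,y)\vrho_1(x)\dd y\dd x$.

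Finally we add the two expressions. The two suprema run over independent variables $\Psi_0,\Psi_1\in\PP$ and both are finite, so their sum equals the supremum over $\FPsi=(\Psi_0,\Psi_1)\in\PP\times\PP$ of the summed integrands. By the definition of the antisymmetric pairing, the summed integrand is exactly $\int_\Ome\int_\Ome\grad_\eps[u][\FPsi;\Frho]\dd y\dd x$.

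There is no real obstacle; the heavy lifting is already contained in \cref{thm:dual representation for bounded functions}. The only points needing care are the integrability and Fubini justification above, and the fact that the suprema are not attained, so we work with suprema throughout rather than maxima.
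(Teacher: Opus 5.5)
Your proposal is correct and follows the paper's proof essentially step by step. Like the paper, you apply \cref{thm:dual representation for bounded functions} to each part, use $\int_\Ome\Psi(x,y)\dd y=1$ to absorb $u(x)\vrho_i(x)$ into the double integral, rewrite the infimum in the $\vrho_1$-part as a supremum of the negative, and add the two independent suprema. Your explicit integrability and Fubini--Tonelli justification, and your remark that finite suprema over independent variables add to the supremum over $\PP\times\PP$, are details the paper leaves implicit but uses in the same way.
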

\begin{proof}
    Due to \cref{thm:dual representation for bounded functions} we have
    \begin{align}
        \frac{1}{\eps}\int_\Ome [\esssup{u}{\ball{\eps}{x}\cap\Ome}&-u(x)] \vrho_0(x)\dd x=\frac{1}{\eps}\left[\sup_{\Psi\in\PP} \int_\Ome\int_\Ome \Psi(x,y) \vrho_0(x) u(y) \dd y\dd x-\int_\Ome u(x)\vrho_0(x)\dd x\right]\\
        &=\sup_{\Psi\in \PP} \frac{1}{\eps}\left[\int_\Ome\int_\Ome\Psi(x,y)\vrho_0(x) u(y)\dd y-\int_\Ome\Psi(x,y)\dd y\, u(x)\vrho_0(x)\dd x\right]\\
        &=\sup_{\Psi\in\PP}\int_\Ome\int_\Ome \Psi(x,y)\vrho_0(x)\frac{u(y)-u(x)}{\eps}\dd y\dd x\\
        &=\sup_{\Psi\in\PP}\int_\Ome\int_\Ome \Psi(x,y)\vrho_0(x)\grad_\eps[u](x,y) \dd y\dd x
    \end{align}
    and similarly
    \begin{align}
        \frac{1}{\eps}\int_\Ome [u(x)&-\essinf{u}{\ball{\eps}{x}\cap\Ome}]\vrho_1(x)\dd x=\frac{1}{\eps}\left[\int_\Ome u(x)\vrho_1(x)\dd x-\inf_{\Psi\in \PP}\int_\Ome\int_\Ome \Psi(x,y)\vrho_1(x)u(y)\dd y\dd x\right]\\
        &=\frac{1}{\eps}\left[\int_\Ome u(x)\vrho_1(x)\dd\ x+\sup_{\Psi\in \PP}\int_\Ome\int_\Ome \Psi(x,y)\vrho_1(x)(-u(y))\dd y\dd x\right]\\
        &=\sup_{\Psi\in\PP}\frac{1}{\eps}\int_\Ome\int_\Ome \Psi(x,y)\vrho_1(x)\dd y\,u(x)+\int_\Ome\Psi(x,y)\vrho_1(x) (-u(y))\dd y\dd x\\
        &=\sup_{\Psi\in\PP} \int_\Ome\int_\Ome \Psi(x,y)\vrho_1(x) \frac{u(x)-u(y)}{\eps}\dd y\dd x\\
        &=\sup_{\Psi\in\PP}\int_\Ome\int_\Ome (-\Psi(x,y))\vrho_1(x)\grad_\eps[u](x,y)\dd y\dd x.
    \end{align}
    Summing both expressions and taking the supremum over $(\Psi_0,\Psi_1)\in \PP\times\PP$ proves the statement.
\end{proof}
We next rewrite the gradient formulation in an integration-by-parts form to derive the nonlocal divergence. Owing to joint measurability of the test functions, we may apply the Fubini–-Tonelli theorem and directly compute the solution, which exhibits the same divergence-type structure as in the continuous setting.
\begin{proposition}[Nonlocal divergence]\label{prop:u bounded-nonlocal divergence}
    Under the conditions of \cref{thm:u bounded-dual representation} for all $u\in L^\infty(\Ome)$ and $\Psi\in\PP$ the following identity holds true for $i\in \{0,1\}$:
    \begin{align}
        \int_\Ome\int_\Ome (\Psi\vrho_i)(x,y)\grad_\eps[u](x,y)\dd y\dd x=-\int_\Ome u(y)\div_\eps[\Psi\vrho_i](y)\dd y. 
    \end{align}
    Moreover, we have
    \begin{align}
        \tv_\eps(u)=\sup_{\FPsi\in\PP\times\PP}-\int_\Ome u(y) \div_\eps\left[\FPsi;\Frho\right](y)\dd y.
    \end{align}
\end{proposition}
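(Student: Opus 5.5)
The plan is to verify the integration-by-parts identity by a direct Fubini--Tonelli computation, splitting the gradient into its two terms. The second claim then follows by summing over $i$ with signs and invoking \cref{prop:u bounded-nonlocal gradient}.

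Fix $i\in\{0,1\}$ and $\Psi\in\PP$, and set $f\coloneq\Psi\vrho_i$, so that $f(x,y)=\Psi(x,y)\vrho_i(x)$. I first check integrability. Since $\Psi\ge 0$ and $\int_\Ome\Psi(x,y)\dd y=1$, Tonelli gives
\begin{align}
\int_\Ome\int_\Ome \abs{f}\dd y\dd x=\int_\Ome\vrho_i(x)\dd x=\vrho_i(\Ome)<\infty .
\end{align}
So $f\in L^1(\Ome\times\Ome)$. Because $u\in L^\infty(\Ome)$, the function $f\,\grad_\eps[u]$ is integrable with bound $2\norm{u}_\infty\vrho_i(\Ome)/\eps$. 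Joint measurability of $\Psi$ is part of its membership in $L^1(\Ome\times\Ome)$, and it is what makes Fubini applicable here.

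Next I split
\begin{align}
\int_\Ome\int_\Ome f(x,y)\,\grad_\eps[u](x,y)\dd y\dd x=\frac1\eps\int_\Ome\int_\Ome f(x,y)u(y)\dd y\dd x-\frac1\eps\int_\Ome\int_\Ome f(x,y)u(x)\dd y\dd x .
\end{align}
In the first term I apply Fubini and obtain $\frac1\eps\int_\Ome u(y)\int_\Ome f(x,y)\dd x\dd y$. In the second term I swap the names of the variables and obtain $\frac1\eps\int_\Ome u(y)\int_\Ome f(y,x)\dd x\dd y$. Combining the two gives
\begin{align}
-\int_\Ome u(y)\int_\Ome\frac{f(y,x)-f(x,y)}{\eps}\dd x\dd y .
\end{align}

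It remains to restrict the inner integral to $\ball{\eps}{y}\cap\Ome$, which recovers \cref{def:u bounded-nonlocal divergence}. This restriction is the only delicate point.

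For the term $f(y,x)=\Psi(y,x)\vrho_i(y)$, the support condition $\esssupp\Psi(y,\cdot)\subset\clball{\eps}{y}$ shows that the integrand vanishes for a.e. $x$ outside $\clball{\eps}{y}$. The sphere $\partial\ball{\eps}{y}$ is Lebesgue-null, so replacing the closed ball by the open one costs nothing.

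For the term $f(x,y)=\Psi(x,y)\vrho_i(x)$, I use the set $E\coloneq\set{(x,y)}{\abs{x-y}>\eps}$. For each $x$, the section $E_x$ meets the essential support of $\Psi(x,\cdot)$ only in a null set. Hence $\int_E\abs{f}=0$ by Tonelli, integrating first in $y$. Tonelli in the other order then shows that for a.e. $y$ the function $x\mapsto f(x,y)$ vanishes a.e. outside $\clball{\eps}{y}$.

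Both restrictions therefore hold for a.e. $y$, and this suffices for the outer $\dd y$-integral. This also shows that $\div_\eps[\Psi\vrho_i]\in L^1(\Ome)$.

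Finally, by linearity of $\div_\eps$ and of the pairing, $\div_\eps[\FPsi;\Frho]=\div_\eps[\Psi_0\vrho_0]-\div_\eps[\Psi_1\vrho_1]$. Applying the identity with $i=0$ and $i=1$ and inserting the result into \cref{prop:u bounded-nonlocal gradient} gives the supremum formula for $\tv_\eps(u)$.
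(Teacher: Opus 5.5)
Your proof is correct and follows essentially the same route as the paper. Both arguments split $\grad_\eps[u]$ into its two terms and apply Fubini--Tonelli together with $\int_\Ome\Psi(x,y)\dd y=1$. Both then restrict the inner integral to $\ball{\eps}{y}\cap\Ome$ via the support condition, and combine the cases $i=0,1$ with the gradient formulation of $\tv_\eps$. You are more careful than the paper, which simply invokes ``the support condition'': you justify integrability explicitly, and you show by Tonelli on $\{\abs{x-y}>\eps\}$ that $x\mapsto\Psi(x,y)\vrho_i(x)$ vanishes a.e.\ outside $\clball{\eps}{y}$ for a.e.\ $y$.
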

\begin{proof}
    For $\Psi\in\PP$ and $i=0,1$, due to Fubini--Tonelli and the integral constraint, we have
    \begin{align}
        \int_\Ome\int_\Ome \Psi(x,y)\grad_\eps[u](x,y)\dd y\vrho_i(x)\dd  x&=\int_\Ome\int_\Ome \Psi(x,y) u(y) \dd y\vrho_i(x)\dd x-\int_\Ome u(x)\vrho_i(x)\dd x\\&=\int_\Ome \Big[\int_\Ome\Psi(x,y)\vrho_i(x)\dd x\, -\int_\Ome \Psi(y,x) \vrho_i(y)\dd x\Big] \,u(y) \dd y\\&=-\int_\Ome\int_\Ome  [\Psi(y,x)\vrho_i(y)-\Psi(x,y)\vrho_i(x)]\dd x\, u(y) \dd y.
    \end{align}
    Dividing by $\eps$ and the support condition of $\Psi\in\PP$ leads to the first equality. Applying \cref{thm:dual representation for bounded functions} completes the proof.
\end{proof}

\subsection{Limit Characterization of the Subdifferential}\label{limit characterization}
    Using the same arguments as in the proof of \cref{prop:u cont-subdiff characterization}, we have that $\mu^*\in\partial\tv_\eps(u)$ for $u\in L^\infty(\Ome)$ if and only if 
    \begin{align}
        \mu^*\in \textrm{cl}\set{-\div_\eps[\FPsi;\Frho]}{\FPsi\in \PP\times\PP}\qquad\text{and}\qquad \tv_\eps(u)=\int_\Ome u \dd \mu^*
    \end{align}
    where the closure is taken with respect to $L^\infty(\Ome)^*$. 
    However, since $L^\infty(\Ome)$ is not separable, the unit ball in its dual space \emph{is not} sequentially but only topologically weak-* closed.
    This dual space is known to be the space of all finitely additive finite signed measures on $\BB_\Ome$ that are absolutely continuous with respect to the $N$-dimensional Lebesgue measure, denoted by $ba(\Ome,\BB_\Ome,\lambda^N)$; see \cite[Theorem IV.8.16]{dunford1988}. 
    To obtain a ``sequential'' characterization of the closure we need to work with nets rather than sequences, since a point belongs to the closure of a set in a topological space if and only if it is the limit of a net in the set, see \cite[Theorem 2.14]{aliprantis2006}.
    
    We shortly recall the definition and some properties of a net, following \cite[Chapter 2.4]{aliprantis2006} where a complete introduction is given.
    For a topological space $(X,\tau)$ a \emph{net} is a function $x\colon D\to X$, where $D$ is a directed set.
    This refers to a set with a reflexive transitive binary relation $\preceq$, called direction, which enjoys the property that each pair has an upper bound, i.e., for all $x,y\in X$ there exists $z\in X$ with $x\preceq z$ and $y\preceq z$. 
    In particular, sequences are nets where $D=\IN$. 
    A net $(x_\alpha)_{\alpha\in D}\subset X$ converges to some point $x\in X$ if for every open neighborhood $U$ of $x$ there is an index $\alpha_0$, dependent on $U$, such that $x_\alpha\in U$ for all $\alpha\geq \alpha_0$.
    Note that a net in the dual space $X^*$ converges in the weak-* topology if and only if its dual pairing with any element in $X$ is converging as a net in $\IR$; see the text below \cite[Definition 5.90]{aliprantis2006}.
\begin{proposition}
    Let $\Ome\subset\IRN$ satisfy \cref{ass:u bounded-feature space}, let $\mu\in\P(\Ome\times\{0,1\})$ be a probability measure and $\vrho_i\coloneq\mu(\filll\times\{i\})$ for $i=0,1$ the respective conditional distributions such that $\vrho_0,\vrho_1\ll\lambda^N$. Furthermore, let $u\in L^\infty(\Ome)$. Then, $\mu^*\in \p\tv_\eps(u)$ if and only if
    \begin{align}
            \begin{dcases}
                &\mu^*\in ba(\Ome,\BB_\Ome,\lambda^N), \quad\tv_\eps(u)=\int_\Ome u \dd \mu^*,\quad\text{and}
                \\
                & \text{there exists a net } ({\FPsi}_\alpha)_{\alpha\in D}\text{ in }\PP\times\PP \text{ such that}
                \\ 
                &-\int_\Ome v\div_\eps[\FPsi_\alpha;\Frho]\dd x\to \int_\Ome v\dd \mu^*\text{ for all }v\in L^\infty(\Ome).
            \end{dcases}
    \end{align}
\end{proposition}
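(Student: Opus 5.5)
The plan is to run the same convex-analytic argument as in the proof of \cref{prop:u cont-subdiff characterization}, now on $X=L^\infty(\Ome)$ with $X^*=ba(\Ome,\BB_\Ome,\lambda^N)$ by \cite[Theorem IV.8.16]{dunford1988}. Set
\begin{align}
    P\coloneq\set{-\div_\eps[\FPsi;\Frho]\,\lambda^N}{\FPsi\in\PP\times\PP}\subset L^1(\Ome)\subset ba(\Ome,\BB_\Ome,\lambda^N).
\end{align}
By \cref{thm:u bounded-dual representation}, $\tv_\eps(u)=\sup_{\mu\in P}\int_\Ome u\dd\mu=\chi_P^*(u)$ for all $u\in L^\infty(\Ome)$, where $\chi_P$ is the indicator of $P$ on $X^*$ and the conjugate is taken in the pairing $(X^*,X)$.

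First, $\tv_\eps$ is convex, positively one-homogeneous and finite on $L^\infty(\Ome)$ (a supremum of linear functionals). Second, I need that $\tv_\eps^*=\chi_{\overline{\mathrm{conv}}^{w*}(P)}$. Computing directly, $\tv_\eps^*(\mu)=\sup_u\bigl(\langle\mu,u\rangle-\sup_{p\in P}\langle p,u\rangle\bigr)$. This equals $0$ if $\langle\mu,u\rangle\le\sup_P\langle\cdot,u\rangle$ for all $u$, and $+\infty$ otherwise by homogeneity. By Hahn--Banach separation in the locally convex space $(X^*,w^*)$, whose dual is $X$, the first alternative is equivalent to $\mu\in\overline{\mathrm{conv}}^{w*}(P)$. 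This is the same fact as \cite[Example 4.3]{ekeland1999}.

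Third, $P$ is convex. Indeed $\PP$ is convex, since nonnegativity, the unit mass constraint in $y$ and the support condition are preserved by convex combinations, and $\FPsi\mapsto\div_\eps[\FPsi;\Frho]$ is linear. Hence $\overline{\mathrm{conv}}^{w*}(P)=\overline{P}^{w*}$.

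Fourth, I apply the Fenchel--Young equality: $\mu^*\in\p\tv_\eps(u)$ if and only if $\tv_\eps(u)+\tv_\eps^*(\mu^*)=\int_\Ome u\dd\mu^*$. Since $\tv_\eps^*\in\{0,\infty\}$ and the left side would otherwise be infinite, this is equivalent to $\mu^*\in\overline{P}^{w*}$ together with $\tv_\eps(u)=\int_\Ome u\dd\mu^*$.

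Finally, I translate weak-* closure into nets. By \cite[Theorem 2.14]{aliprantis2006}, $\mu^*\in\overline{P}^{w*}$ if and only if some net $(p_\alpha)_{\alpha\in D}\subset P$ converges weak-* to $\mu^*$. Writing $p_\alpha=-\div_\eps[\FPsi_\alpha;\Frho]$, weak-* convergence means $\langle p_\alpha,v\rangle\to\langle\mu^*,v\rangle$ for every $v\in L^\infty(\Ome)$. This is exactly $-\int_\Ome v\div_\eps[\FPsi_\alpha;\Frho]\dd x\to\int_\Ome v\dd\mu^*$. The embedding $L^1\hookrightarrow ba$ via $f\mapsto f\lambda^N$ makes these pairings agree, using that $\div_\eps[\FPsi;\Frho]\in L^1(\Ome)$ by the Fubini computation in \cref{prop:u bounded-nonlocal divergence}.

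The main obstacle is the second step: identifying the conjugate with the indicator of the weak-* closed convex hull. One must make sure the separation uses the weak-* topology rather than the norm topology, since the relevant dual of $(X^*,w^*)$ is $X$ itself and not $X^{**}$. One must also check that the supremum representation from \cref{thm:u bounded-dual representation} holds for every $u\in L^\infty(\Ome)$, which it does. Everything else is bookkeeping.
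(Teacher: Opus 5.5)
Your proposal is correct and takes essentially the same route as the paper: you write $\tv_\eps$ as the support function of the convex set $P=\{-\div_\eps[\FPsi;\Frho]\st\FPsi\in\PP\times\PP\}\subset ba(\Ome,\BB_\Ome,\lambda^N)$, identify its conjugate with the indicator of the weak-* closure of $P$, apply the Fenchel--Young equality, and then use \cite[Theorem 2.14]{aliprantis2006} to pass from weak-* closure to convergence of nets. Your explicit Hahn--Banach separation argument in $(L^\infty(\Ome)^*,w^*)$ spells out the conjugate computation that the paper only carries over from the $C_0(\X)$ case via \cite[Example 4.3]{ekeland1999}.
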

\begin{remark}[Difficulties and Conjecture]
    Calculating the precise form of subgradients $\mu^*$ is delicate. 
    In \cref{prop:u cont-subdiff characterization} we showed that the set under consideration was already closed, which is not true in this case, as $L^1(\Ome)$ is known not to be closed in the weak-* $L^\infty(\Ome)$ topology. 
    Furthermore, as explained above, in $L^\infty(\Ome)^*$ sequential weak-* closedness does not coincide with weak-* closedness which is why we needed to switch to the more general notion of nets instead of sequences which provide the equivalence of closedness and sequential closedness even in general topological spaces as mentioned above. 
    We conjecture that subgradients are given by nonlocal divergences of random walks that are $ba$ measures and whose nonlocal divergence is still a $ba$ measure, comparable to the result in \cref{reformulation cont funct}.
    Proving this, however, is beyond the scope of this work.
\end{remark}

\section{Conclusions}\label{conclusions}

In this paper, we derived dualization formulas for the nonlocal adversarial total variation functional in two different settings and characterized their respective subdifferentials. Using measure-theoretic tools, we provided a new perspective on adversarial total variation that opens new doors for the analytical and algorithmic treatment of the adversarial training problem. 
In particular, we obtained an integral characterization of its subgradients for the separable base space $C_0(\X)$ as nonlocal divergences of random walks. 
On the larger space $L^\infty(\Omega)$, we were still able to characterize the subdifferential in a limit sense. Furthermore, we showed that the nonlocal divergences appearing in both settings are consistent with each other and (as $\eps\to 0$) with local differential operators.

We highlight several directions for future research based on this work:

First, if one is able to properly define the dualization formula and the nonlocal divergence for separately measurable rather than jointly measurable test functions in the second setting, it should be possible to obtain a subdifferential characterization for the case of a general reference measure, thereby generalizing our present results.

Second, it would be interesting to investigate to what extent our subdifferential characterizations can be further refined. 
This could be pursued by explicitly analyzing the weak-* closure and computing the limit points of the nets of test functions in order to obtain an integral characterization in both cases. 

Third, one could investigate nonlocal versions of the Anzellotti pairing \cite{anzellotti1983pairings} to derive a pointwise characterization of the subgradients similar to the one in \cite{bredies2016pointwise}.
As mentioned in the introduction, for the classical total variation Anzellotti pairings are used to define both a normal trace operator and a full trace operator, allowing the integral condition to be replaced by a pointwise trace condition, see \cite{bredies2016pointwise}. 
In the spirit of transferring successful ideas originally developed for classical total variation to the nonlocal setting, this appears to be a promising direction.

Lastly, an important part of the motivation for this work was to open the doors for new algorithmic approaches to solve the adversarial training problem more efficiently. 
As explained in the introduction, primal--dual algorithms have been successfully applied to problems with similar structure to adversarial training, however, involving the local total variation. Therefore, a natural next step is to exploit the results of this paper to develop a primal--dual algorithm for adversarial training.

\section*{Acknowledgments}

LB and LS acknowledge funding by the Deutsche Forschungsgemeinschaft (DFG, German Research Foundation) – project number 544579844 (GeoMAR) within DFG-SPP 2298 ``Theoretical Foundations of Deep Learning''.
Furthermore, they are grateful to the Casa Matemática Oaxaca (CMO) for hosting them during the workshop \emph{Mathematical Analysis of Adversarial Machine Learning} (25w5469) in August 2025.

\printbibliography

\begin{appendix}

\section*{Appendix}

\section{Adapted Measurable Maximum Theorem}\label{adapted measurable max}

The techniques used in this paper rely heavily on the theory of measurable selectors for measurable correspondences. 
A comprehensive introduction can be found in \cite{aliprantis2006, fonseca2007, rockafeller1998variational}.
In this section, we present an adaptation of the measurable maximum theorem (see \cite[Theorem 18.19]{aliprantis2006}), which guarantees measurability of maximizers in certain optimization problems.
The following proofs are based on the Kuratowski--Ryll-Nardzewski theorem and adapt the classical proof of the measurable maximum theorem; see again \cite{aliprantis2006}. The main difficulty is to relax the assumption that correspondences are compact-valued and replace it with closed-valuedness. Compared to the classical Filippov theorem, we introduce an additional parameter $\delta>0$ and define a correspondence $\gamma$ that maps $s$ to all $x\in\phi(s)$ such that $f(s,\cdot)$ is within $\delta$ of the supremum value $\pi(s)$. This relaxation makes it sufficient to assume that $\phi$ is closed-valued.

\begin{lemma}[Adapted Filippov's implicit function theorem]\label{lem:adapted Filippov}
    Let $(S,\Sigma)$ be a measurable space and let $X$ be a Polish metric space. Suppose that $f\colon S\times X\to \IR$ is a Carathéodory function and that $\phi\colon S\corr X$ is weakly measurable with nonempty closed values. Furthermore, let $\delta>0$ and assume that $\pi\colon S\to \IR$ is measurable and that for each $s\in S$ there exists an $x\in \phi(s)$ such that $f(s,x)\geq\pi(s)-\delta$. Additionally, assume that for each $s\in S$ we have $f(s,x)\leq \pi(s)$ for all $x\in \phi(s)$. Then, the correspondence $\gamma\colon S\corr X$, defined by
    \begin{align}
        \gamma(s)=\set{x\in \phi(s)}{f(s,x)\geq \pi(s)-\delta}
    \end{align}
    is weakly measurable with nonempty and closed values. In addition to that, $\gamma$ admits a measurable selector, i.e., a measurable function $\xi\colon S\to X$ with $\xi(s)\in \phi(s)$ and $f(s,\xi(s))\geq \pi(s)-\delta$ for any $s\in S$.
\end{lemma}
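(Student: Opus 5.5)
The approach is to reduce the statement to the Kuratowski--Ryll-Nardzewski selection theorem (\cite[Theorem 18.13]{aliprantis2006}). That theorem says a weakly measurable correspondence into a Polish space with nonempty closed values admits a measurable selector. So it suffices to show that $\gamma$ has nonempty closed values and is weakly measurable.

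Nonemptiness holds by hypothesis, since there is $x\in\phi(s)$ with $f(s,x)\ge\pi(s)-\delta$. Closedness follows because $\gamma(s)=\phi(s)\cap f(s,\cdot)^{-1}([\pi(s)-\delta,\infty))$ intersects the closed set $\phi(s)$ with a closed preimage under the continuous map $f(s,\cdot)$.

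For weak measurability I would follow the classical proof of the measurable maximum theorem.
\begin{enumerate}
\item Apply Castaing's representation (\cite[Corollary 18.14]{aliprantis2006}) to $\phi$. This gives measurable selectors $(\xi_n)_{n\in\IN}$ with $\phi(s)=\overline{\{\xi_n(s)\st n\in\IN\}}$ for every $s$.
\item Fix an open $G\subset X$. The goal is to identify $\gamma^\ell(G)$ with a countable combination of measurable sets. The natural candidate is
\begin{align}
\gamma^\ell(G)=\bigcap_{k\in\IN}\bigcup_{n\in\IN}\Big\{s\st \xi_n(s)\in G_k,\ f(s,\xi_n(s))>\pi(s)-\delta-\tfrac1k\Big\},
\end{align}
where $G_k$ are suitable open shrinkings of $G$. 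Each set on the right is measurable. Indeed, $s\mapsto f(s,\xi_n(s))$ is measurable, because a Carathéodory function composed with a measurable map is measurable (\cite[Lemma 4.51]{aliprantis2006}), and $\pi$ is measurable by assumption.
\item To avoid the approximation issue, replace $\delta$ by the strict version $\gamma'(s)=\{x\in\phi(s)\st f(s,x)>\pi(s)-\delta\}$ inside an open-set argument. Here density of $\{\xi_n(s)\}$ in $\phi(s)$ together with continuity of $f(s,\cdot)$ gives
\begin{align}
\{s\st \gamma'(s)\cap G\neq\emptyset\}=\bigcup_n\{s\st \xi_n(s)\in G,\ f(s,\xi_n(s))>\pi(s)-\delta\}.
\end{align}
This holds because the set $\{x\in\phi(s)\st f(s,x)>\pi(s)-\delta\}\cap G$ is relatively open in $\phi(s)$, so if it is nonempty it contains some $\xi_n(s)$.
\end{enumerate}

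The main obstacle is the non-strict inequality defining $\gamma$. A point of $\gamma(s)\cap G$ might satisfy only $f(s,x)=\pi(s)-\delta$ exactly and not be approximable by the $\xi_n(s)$ from within the strict set.

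The cleanest fix is to avoid proving weak measurability of $\gamma$ directly and instead deduce it from the measurability of $\gamma'$ and closures. Concretely, $\gamma'(s)$ is nonempty whenever some $x$ attains a value strictly above $\pi(s)-\delta$. By running the argument with $\delta$ replaced by $\delta/2$ (the hypothesis gives existence at level $\delta$, and we need existence at a strictly better level), we get that $\gamma'_{\delta}(s)\supseteq\gamma_{\delta/2}(s)\neq\emptyset$. This nonemptiness needs the existence hypothesis at level $\delta/2$; if only level $\delta$ is available, I would apply the hypothesis as stated with the strict version and note that the paper's uses (Proposition~\ref{prop:u bounded-measurability of dual representation}, where $\pi=\mathfrak{s}$ is a supremum) allow any $\delta$.

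Then the closure $\overline{\gamma'(s)}$ is weakly measurable, since the closure of a weakly measurable correspondence is weakly measurable. It is also nonempty, closed, and contained in $\gamma(s)$ by continuity. Kuratowski--Ryll-Nardzewski then yields a measurable $\xi$ with $\xi(s)\in\overline{\gamma'(s)}\subset\gamma(s)$, which gives $\xi(s)\in\phi(s)$ and $f(s,\xi(s))\ge\pi(s)-\delta$.

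For weak measurability of $\gamma$ itself, one writes
\begin{align}
\gamma(s)=\bigcap_k\overline{\gamma'_{\delta+1/k}(s)}.
\end{align}
Each $\gamma'_{\delta+1/k}(s)$ is nonempty because it contains $\gamma(s)$. One then argues via the distance functions $s\mapsto \mathrm{dist}(x,\cdot)$, which is the step I expect to require the most care.
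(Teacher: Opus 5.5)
Your proposal does not prove the statement as written, and neither of the two steps you leave open can be filled, because the statement is too strong.

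\textbf{What is correct.} Your Castaing argument showing that $\gamma'(s)=\{x\in\phi(s)\st f(s,x)>\pi(s)-\delta\}$ is weakly measurable is sound. The obstacle you isolate is also real. The paper constructs the same set as $\phi\cap\Psi$ with $\Psi=\bigcup_n\Psi_n$ and then asserts $\gamma=\overline{\phi\cap\Psi}$. In general only $\overline{\gamma'(s)}\subseteq\gamma(s)$ holds; equality fails, e.g., if $\gamma'(s)=\emptyset$ or if $\phi(s)$ has an isolated point at level exactly $\pi(s)-\delta$. Your density argument is also the right replacement for the paper's identity $(\phi\cap\Psi)^\ell(G)=\phi^\ell(G)\cap\Psi^\ell(G)$, which does not hold for general open $G$.

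\textbf{First gap: nonemptiness of $\gamma'(s)$.} This does not follow from attainment at level $\delta$, and your $\delta/2$ device uses a hypothesis you do not have. It cannot be repaired. If $\pi(s)=\sup_{\phi(s)}f(s,\cdot)+\delta$, then $\gamma(s)$ is the argmax set, which for non-compact values may have no Borel selector. For instance, take $S$ and $X$ Polish, $\Sigma$ Borel, $\phi\equiv X$, $\pi\equiv\delta$, and $f(s,x)=-\min\{1,\mathrm{dist}((s,x),F)\}$. Here $F\subseteq S\times X$ is closed, has nonempty sections, and admits no Borel uniformization; such $F$ exist because disjoint coanalytic sets need not be Borel separable. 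All hypotheses hold and $\gamma(s)=F_s$. So the selector claim needs the extra assumption $\gamma'(s)\neq\emptyset$ for all $s$. This holds automatically when $\pi$ is the supremum, as in \cref{thm:adapted measurable maximum}. Under it your argument via $\overline{\gamma'}$ is complete. More directly, $\xi(s)\coloneq\xi_{n(s)}(s)$ with $n(s)\coloneq\min\{n\st f(s,\xi_n(s))>\pi(s)-\delta\}$ is already a measurable selector.

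\textbf{Second gap: weak measurability of $\gamma$ itself.} Your planned route via $\gamma=\bigcap_k\overline{\gamma'_{\delta+1/k}}$ and distance functions cannot work. For decreasing closed sets $D_k(s)$ without compactness, $\mathrm{dist}(x,\bigcap_k D_k(s))$ need not be the limit of $\mathrm{dist}(x,D_k(s))$. Moreover, for non-complete $\Sigma$, countable intersections of closed-valued weakly measurable correspondences need not be weakly measurable. In fact the claim fails even when $\pi$ is the supremum:
\begin{itemize}
\item Let $S$ be an uncountable Polish space with its Borel $\sigma$-algebra, and $C\subseteq S\times\IN^{\IN}$ closed with analytic non-Borel projection $A$.
\item Let $X=\IN^{\IN}\cup\{*\}$ with $*$ isolated, $\phi\equiv X$, $\pi\equiv\delta$, $f(s,*)=\delta$, and $f(s,y)=-\min\{1,\mathrm{dist}((s,y),C)\}$ for $y\in\IN^{\IN}$.
\end{itemize}
All hypotheses hold and $\gamma(s)=\{*\}\cup C_s$, so $\gamma^\ell(\IN^{\IN})=A\notin\Sigma$, although $\xi\equiv *$ is a measurable selector.

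What can be proved is weak measurability of $\gamma'$ together with a selector of $\overline{\gamma'}\subseteq\gamma$, under the strict-attainment assumption. The weak measurability of $\gamma$, and of $\mu_\delta$ in \cref{thm:adapted measurable maximum}, should be dropped. This is harmless, since only the selector is used in \cref{prop:u bounded-measurability of dual representation}.
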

\begin{proof}
    We define 
    \begin{align}
        g\colon \IR\times\IR\to\IR,\quad g(y_1,y_2)=\abs{y_1-y_2}
    \end{align}
    and 
    \begin{align}
        h\colon S\times X\to\IR,\quad h(s,x)=g(f(s,x),\pi(s))=\abs{f(s,x)-\pi(s)}.
    \end{align}
   Note that $\pi$ and $f$ are both Carathéodory functions and due to the continuity of $g$, $h$ is jointly measurable; even a Carathéodory function.
    Next, for $n\in\IN$ and $n>\frac{1}{\delta}$ define
    \begin{align}
        \Psi_n\colon S\corr X,\quad \Psi_n(s)=\set{x\in X}{\abs{f(s,x)-\pi(s)}<\delta-n^{-1}}.
    \end{align}
    So for any $s\in S$ we have
    \begin{align}
        \Psi_n(s)=\set{x\in X}{h(s,x)<\delta-n^{-1}}=\set{x\in X}{h(s,x)\in (-\infty,\delta-n^{-1})}.
    \end{align}
    As $h$ is a Carathéodory function and $(-\infty,\delta-n^{-1})$ is an open subset in $\IR$, we can use that such a correspondence defined over an open set is known to be measurable, see \cite[Lemma 18.7]{aliprantis2006}.
    Note that for fixed $s\in S$ we have $h(s,x)<\delta-\frac{1}{n}$ for any $x\in \Psi_n(s)$ which implies that all $x\in \Psi_n(s)$ are elements of $X$ that fulfill $\pi(s)-\delta+\frac{1}{n}<f(s,x)<\pi(s)+\delta-\frac{1}{n}$. However, if $x\in \phi(s)$ then by assumption the second inequality becomes redundant as this is trivially fulfilled for all $x\in \phi(s)$. This implies that
    \begin{align}
        \set{x\in \phi(s)}{f(s,x)> \pi(s)-\delta}=\bigcup_{n=1}^\infty \left(\Psi_n(s)\cap\phi(s)\right)=\phi(s)\cap \left(\bigcup_{n=1}^\infty \Psi_n(s)\right).
    \end{align}
    Define $\Psi\coloneq \bigcup_{n=1}^\infty \Psi_n$ as the countable union of measurable correspondences. Hence, $\Psi$ is a measurable correspondence and thus also weakly measurable. Since by assumption $\phi$ is also weakly measurable, we have for any open set $G\subset X$ that
    \begin{align}
        (\phi\cap \Psi)^\ell(G)&=\set{s\in S}{(\phi(s)\cap\Psi(s))\cap G\neq \emptyset}\\&=\set{s\in S}{\phi(s)\cap G\neq \emptyset\text{ and }\Psi(s)\cap G\neq \emptyset}=\phi^\ell(G)\cap \Psi^\ell(G)\in \Sigma
    \end{align}
    which shows that $\phi\cap\Psi$ is a weakly measurable correspondence. Note that $\gamma=\overline{\phi\cap\Psi}$ and the closure of a weakly measurable correspondence is weakly measurable itself. By assumption, for each $s\in S$ there exists an $x\in \phi(s)$ such that $f(s,x)\geq \pi(s)-\delta$ which implies that $\gamma$ has nonempty values. Furthermore, $\gamma(s)$ is closed for any $s\in S$ by construction.
    Lastly, $\gamma$ being a weakly measurable, nonempty and closed-valued correspondence from a measurable space into a Polish space, the Kuratowski--Ryll-Nardzewski selection theorem guarantees the existence of a measurable selector.
\end{proof}
Next, we show that in this setting the supremum is measurable and that approximate maximizers are measurable as well. To this end, we reformulate the problem so that the adapted Filippov theorem can be applied.
\begin{theorem}[Adapted measurable maximum theorem]\label{thm:adapted measurable maximum}
    Let $X$ be a Polish metric space and $(S,\Sigma)$ a measurable space. Let $\phi\colon S\corr X$ be a weakly measurable correspondence with nonempty closed values, and suppose $f\colon S\times X\to \IR$ is a Carathéodory function. Define
    \begin{align}
        m\colon S\to \IR,\quad m(s)=\sup_{x\in \phi(s)} f(s,x)
    \end{align}
    and for $\delta>0$ define
    \begin{align}
        \mu_\delta\colon S\corr X,\quad \mu_\delta(s)=\set{x\in \phi(s)}{f(s,x)\geq m(s)-\delta}.
    \end{align}
    Then,
    \begin{enumerate}[label=(\roman*)]
        \item\label{thm:adapted measurable maximum:i} $m$ is measurable,
        \item\label{thm:adapted measurable maximum:ii} $\mu_\delta$ has nonempty closed values, and
        \item\label{thm:adapted measurable maximum:iii} $\mu_\delta$ is weakly measurable and admits a measurable selector.
    \end{enumerate}
\end{theorem}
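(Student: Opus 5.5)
We prove the three items in order: first measurability of $m$ via countable dense selections, then the elementary closed-values claim, and finally the selector by reducing to \cref{lem:adapted Filippov} with $\pi\coloneq m$.

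\emph{Step (i): measurability of $m$.} Since $\phi$ is weakly measurable with nonempty closed values in the Polish space $X$, the Castaing representation (a consequence of Kuratowski--Ryll-Nardzewski, see \cite{aliprantis2006}) gives a sequence of measurable selectors $\xi_k\colon S\to X$ such that $\{\xi_k(s)\}_{k\in\IN}$ is dense in $\phi(s)$ for every $s\in S$. Each map $s\mapsto f(s,\xi_k(s))$ is measurable, because a Carathéodory function composed with a measurable map $s\mapsto(s,\xi_k(s))$ is measurable. By continuity of $f(s,\cdot)$ and density,
\begin{align}
    m(s)=\sup_{k\in\IN} f(s,\xi_k(s)),
\end{align}
so $m$ is a countable supremum of measurable functions and hence measurable. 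If one wants to avoid extended real values, one assumes, as is implicit in the statement, that $m$ is finite; this is the case in our application since $\mathfrak{s}(x)\le\norm{u}_\infty$.

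\emph{Step (ii): nonempty closed values.} Nonemptiness follows from the definition of the supremum, using that $\delta>0$ and $m(s)<\infty$. For closedness, note that $\mu_\delta(s)=\phi(s)\cap f(s,\cdot)^{-1}([m(s)-\delta,\infty))$ is the intersection of a closed set with the preimage of a closed set under a continuous map, and is therefore closed.

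\emph{Step (iii): weak measurability and a selector.} We apply \cref{lem:adapted Filippov} with $\pi\coloneq m$. Its hypotheses hold as follows:
\begin{itemize}
    \item $\pi$ is measurable by (i);
    \item $f(s,x)\le\pi(s)$ for all $x\in\phi(s)$ by the definition of the supremum;
    \item some $x\in\phi(s)$ satisfies $f(s,x)\ge\pi(s)-\delta$ by (ii).
\end{itemize}
The correspondence $\gamma$ of \cref{lem:adapted Filippov} then coincides with $\mu_\delta$, and the lemma yields weak measurability together with a measurable selector.

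The main obstacle is step (i), specifically justifying the Castaing representation and the joint measurability of $s\mapsto f(s,\xi_k(s))$. The latter uses the standard fact that Carathéodory functions on $S\times X$ with $X$ separable metrizable are $\Sigma\otimes\BB_X$-measurable \cite[Lemma 4.51]{aliprantis2006}. Everything else reduces to definitions and the already established lemma.
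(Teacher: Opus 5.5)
Your argument is the paper's proof step for step. The Castaing representation gives $m=\sup_k f(\cdot,\xi_k(\cdot))$ for (i), and (ii)--(iii) come from \cref{lem:adapted Filippov} with $\pi\coloneq m$. Your explicit check of the lemma's hypotheses (using $\delta>0$ and $m<\infty$) is a little more careful than the paper's one-line appeal, and (i), (ii) are correct.

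Step (iii), however, inherits a genuine defect of that lemma. Neither your proof nor the paper's establishes weak measurability of $\mu_\delta$, and in this generality it is false. In the lemma's proof, $(\phi\cap\Psi)^\ell(G)=\phi^\ell(G)\cap\Psi^\ell(G)$ holds only with ``$\subseteq$''. Likewise $\gamma=\overline{\phi\cap\Psi}$ holds only with ``$\supseteq$'', because points where $f(s,\cdot)$ equals exactly $\pi(s)-\delta$ need not be limits of points of the strict superlevel set.

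Here is a counterexample. Let $S=[0,1]$ with its Borel $\sigma$-algebra. Let $A\subset[0,1]$ be analytic but not Borel, written as the projection of a closed set $F\subset[0,1]\times\mathcal N$ (Baire space $\mathcal N$, product metric $\max\{\abs{s-t},d(x,y)\}$). Let $X\coloneq\mathcal N\sqcup\{p\}$ with $p$ isolated, and $\phi\equiv X$. Set $f(s,x)\coloneq-\mathrm{dist}((s,x),F)$ for $x\in\mathcal N$ and $f(s,p)\coloneq\delta-\mathrm{dist}(s,A)$. Then $f$ is jointly continuous and $\sup_{x\in\mathcal N}f(s,x)=-\mathrm{dist}(s,A)$, hence $m(s)=\delta-\mathrm{dist}(s,A)$. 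For $s\in\overline A$, a point $x\in\mathcal N$ lies in $\mu_\delta(s)$ if and only if $(s,x)\in F$. Thus $\mu_\delta^\ell(\mathcal N)\cap\overline A=A$, so $\mu_\delta^\ell(\mathcal N)$ is not Borel. Hence $\mu_\delta$ is not weakly measurable, although all hypotheses hold. (It would be weakly measurable, e.g., if $\Sigma$ were complete for a $\sigma$-finite measure, by the measurable projection theorem.)

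What does hold, and is all that \cref{prop:u bounded-measurability of dual representation} uses, is the existence of a measurable selector. Your Castaing sequence gives it without the lemma. Since $m(s)=\sup_k f(s,\xi_k(s))$ is finite and $\delta>0$, the index $n(s)\coloneq\min\{k\in\IN\st f(s,\xi_k(s))>m(s)-\delta\}$ exists. Each set $\{s\st n(s)=k\}$ lies in $\Sigma$ by (i) and the measurability of $s\mapsto f(s,\xi_k(s))$. Then $\xi(s)\coloneq\xi_{n(s)}(s)$ is a measurable map with $\xi(s)\in\mu_\delta(s)$.

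By the same density argument, the strict correspondence $s\mapsto\set{x\in\phi(s)}{f(s,x)>m(s)-\delta}$ is weakly measurable: for fixed $s$ the set $G\cap\{x\st f(s,x)>m(s)-\delta\}$ is open, so it meets $\phi(s)$ if and only if it contains some $\xi_k(s)$. Its closure, which is contained in $\mu_\delta(s)$, is weakly measurable as well. Item (iii) should therefore be stated for that correspondence rather than for $\mu_\delta$.
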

\begin{proof}
    By assumption, $\phi$ is a weakly measurable correspondence with nonempty and closed values, so a direct implication of the Kuratowski--Ryll-Nardzewski selection theorem, sometimes known as Castaing Corollary, implies the existence of a sequence $(g_n)_{n\in\IN}$ of measurable selectors from $\phi$ satisfying $\phi(s)=\overline{\{g_1(s),g_2(s),\ldots\}}$ for each $s\in S$. Define $h_n\colon S\to S\times X$ by $h_n(s)=(s,g_n(s))$. For $A\times B\subset  S\times X$ the inverse is given as
    \begin{align}
        h_n^{-1}(A\times B)=\set{s\in S}{(s,g_n(s))\in A\times B}=A\cap g_n^{-1}(B).
    \end{align}
    Hence, for any measurable rectangle $A\times B\in \Sigma \otimes \BB_X$ we have $h_n^{-1}(A\times B)\in \Sigma$ which shows that $h_n$ is $(\Sigma,\Sigma\otimes\BB_X)$-measurable for each $n$. Since $f$ is assumed to be a Carathéodory function, it is jointly measurable and thus $f\circ h_n$ is $\Sigma$-measurable for each $n$. Since $\phi(s)=\overline{\{g_1(s),g_2(s),\ldots\}}$ and $f$ is continuous in the second argument we have
    \begin{align}
        m(s)=\sup_{x\in \phi(s)} f(s,x)=\sup_{n\in\IN} f(s,g_n(s))=\sup_{n\in\IN}[f\circ h_n](s)
    \end{align}
    for any $s\in S$. The pointwise supremum of measurable functions being measurable, $m$ is a measurable function, too, which proves (i).\\
    An application of \cref{lem:adapted Filippov} to this setting directly yields (ii) and (iii).
\end{proof}

\end{appendix}

\end{document}